\newtheorem{theorem}{Theorem}[section]
\newtheorem{lemma}[theorem]{Lemma}
\newtheorem{cor}[theorem]{Corollary}
\newtheorem{prop}[theorem]{Proposition}
\theoremstyle{definition}
\newtheorem{definition}[theorem]{Definition}
\newtheorem{example}[theorem]{Example}
\theoremstyle{remark}
\newtheorem{remark}[theorem]{Remark}
\numberwithin{equation}{section}
\newcommand{\N}{\mathbb N}
\newcommand{\K}{\mathbb K}
\newcommand{\R}{\mathbb R}
\newcommand{\Z}{\mathbb Z}
\newcommand{\C}{\mathbb C}
\begin{document}

\title{Hypercyclic subspaces on Fréchet spaces without continuous norm}
\author[Q. Menet]{Quentin Menet}
\address{Institut de Mathématique\\
Université de Mons\\
20 Place du Parc\\
7000 Mons, Belgique}
\email{Quentin.Menet@umons.ac.be}
\thanks{The author is supported by a grant of FRIA}

\subjclass{Primary 47A16}
\keywords{Hypercyclic operators; Hypercyclic subspaces; Fréchet spaces}

\date{}

\begin{abstract}
Known results about hypercyclic subspaces concern either Fréchet spaces with a continuous norm or the space $\omega$. We fill the gap between these spaces by investigating Fréchet spaces without continuous norm. To this end, we divide hypercyclic subspaces into two types: the hypercyclic subspaces $M$ for which there exists a continuous seminorm $p$ such that $M\cap \ker p=\{0\}$ and the others.  For each of these types of hypercyclic subspaces, we establish some criteria. This investigation permits us to generalize several results about hypercyclic subspaces on Fréchet spaces with a continuous norm and about hypercyclic subspaces on $\omega$. In particular, we show that each infinite-dimensional separable Fréchet space supports a mixing operator with a hypercyclic subspace.
\end{abstract}
\maketitle

\section*{Introduction}

We denote by $\Z$ the set of integers, by $\N$ the set of positive integers and by $\Z_+$ the set of non-negative integers.

Let $(T_k)$ be a sequence of linear continuous operators from $X$ to $Y$ where $X$ is an infinite-dimensional Fréchet space and $Y$ is a separable topological vector space.
The sequence $(T_k)$ is said to be hypercyclic if there exists a vector $x$ in $X$ (also called hypercyclic) such that the orbit of $x$ for $(T_k)$ is dense in $Y$. We are interested in the existence of closed infinite-dimensional subspaces in which every non-zero vector is hypercyclic. Such a subspace is called a hypercyclic subspace.

Some classical hypercyclic operators, like the translation operators on the space of entire functions, possess a hypercyclic subspace \cite{Bernal0} but some others, like scalar multiples of the backward shift on $l^p$, do not possess any hypercyclic subspace \cite{Montes}. A natural question is thus: "Which hypercyclic operators possess a hypercyclic subspace?". Several criteria have been found for operators on Fréchet spaces with a continuous norm (see \cite{Bernal2}, \cite{Bonet}, \cite{Leon}, \cite{Menet2}, \cite{Montes} and \cite{Petersson}) and a characterization has even been given for weakly mixing operators on complex Banach spaces by Gonz\'alez, Le\'on and Montes~\cite{Gonzalez}. Moreover, B\`{e}s and Conejero \cite{Bes} have shown that there exist some hypercyclic operators with hypercyclic subspaces on the space $\omega$, where $\omega $ is the space $\K^{\Z_+}$ ($\K=\R$ or $\C$) endowed with the product topology. On the other hand, Charpentier, Mouze and the author \cite{Charpentier2} have obtained the existence of hypercyclic subspaces for universal series on some Fréchet spaces without continuous norm. Nevertheless, no criterion is known in the case of Fréchet spaces without continuous norm.

The starting point of this paper is the following simple remark. There exist two types of hypercyclic subspace $M$ for $(T_k)$: either there exists a continuous seminorm $p$ on $X$ such that $M\cap \ker p=\{0\}$ (type $1$), or for any continuous seminorm $p$ on $X$, the subspace $M\cap \ker p$ is infinite-dimensional (type $2$).
If $X$ possesses a continuous norm, then each hypercyclic subspace is of type $1$. On the other hand, if each continuous seminorm of $X$ has a kernel of finite codimension, then each hypercyclic subspace is of type $2$; this is the case of $\omega$, for example.
However, in the other cases, there can exist hypercyclic subspaces of type $1$ or of type $2$.

In Section~\ref{type1}, we consider hypercyclic subspaces of type $1$. We start by proving the existence of convenient basic sequences in each Fréchet space admitting a continuous seminorm whose kernel is not of finite codimension. Thanks to these basic sequences, we generalize principal criteria about hypercyclic subspaces on Fréchet spaces with a continuous norm. An interesting application of these results concerns the differential operators on the space $C^{\infty}(\R)$. We prove in Section~\ref{cinfty} that for any non-constant polynomial $P$, the operator $P(D)$ possesses a hypercyclic subspace in $C^{\infty}(\R)$. We can also prove that, on each Fréchet sequence space admitting a continuous seminorm whose kernel is not of finite codimension, the existence of one restricted universal series implies the existence of a closed infinite-dimensional subspace of restricted universal series (Section \ref{section univ}).
Finally, we answer positively a question posed by  B\`{e}s and Conejero in \cite[Problem 8]{Bes}: "Does every separable infinite-dimensional Fréchet space support an operator with a hypercyclic subspace?" (Section \ref{existence}).

In Section~\ref{type2}, we focus on hypercyclic subspaces of type $2$. We establish a sufficient criterion for having a hypercyclic subspace of type $2$ and a sufficient criterion for having no hypercyclic subspace of type $2$. These criteria are applied in Section~\ref{extype2} to three classes of hypercyclic operators. We first look at the case of universal series on Fréchet sequence spaces for which each continuous seminorm has a kernel of finite codimension (Section \ref{univ series}). In particular, we generalize to these spaces a result obtained for $\omega$ in \cite{Charpentier2}. This result together with results obtained in Section~\ref{section univ} characterizes almost completely the existence of closed infinite-dimensional subspaces of restricted universal series in Fréchet spaces.  Afterwards, we improve a result obtained by B\`{e}s and Conejero \cite{Bes} that states that the operators of the form $P(B_w)$, where $P$ is a non-constant polynomial and $B_w$ is a weighted shift, possess a hypercyclic subspace on $\omega$; we show that a larger class of sequences of operators from $\omega$ to $\omega$ possesses a hypercyclic subspace and even a frequently hypercyclic subspace (Section \ref{KN}). Finally, we investigate unilateral weighted shifts on Fréchet sequence spaces without continuous norm (Section \ref{weighted}).

\section{Some criteria for hypercyclic subspaces of type $1$}\label{type1}
We start by proving the simple remark stated in the Introduction.

\begin{prop}\label{prop2type}
Let $X$ be a Fréchet space, $(p_n)_{n\ge 1}$ an increasing sequence of seminorms inducing the topology of $X$ and $M$ an infinite-dimensional subspace in $X$. Either for any ${n\ge 1}$, $\ker p_n\cap M$ is infinite-dimensional or there exists $n\ge 1$ such that $\ker p_n\cap M=\{0\}$.
\end{prop}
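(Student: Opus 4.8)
The plan is to show that the failure of the first alternative forces the second. Suppose it is \emph{not} the case that $\ker p_n\cap M$ is infinite-dimensional for every $n\ge 1$; then there is some $N\ge 1$ with $\dim(\ker p_N\cap M)<\infty$. I want to produce an index $n$ (a priori larger than $N$) with $\ker p_n\cap M=\{0\}$. The key observation is that the sequence $(p_n)$ is increasing, so the subspaces $\ker p_n\cap M$ are \emph{decreasing}: $\ker p_{n+1}\subseteq\ker p_n$, hence $\ker p_{n+1}\cap M\subseteq\ker p_n\cap M$. Thus from $n=N$ onwards we have a decreasing chain of finite-dimensional spaces.

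**Stabilization and the use of completeness.**
A decreasing chain of finite-dimensional vector spaces stabilizes: there is $n_0\ge N$ such that $\ker p_n\cap M=\ker p_{n_0}\cap M=:F$ for all $n\ge n_0$. It remains to show $F=\{0\}$. Take any $x\in F$. Then $p_n(x)=0$ for every $n\ge n_0$, and since $(p_n)$ is increasing, also $p_n(x)\le p_{n_0}(x)=0$ for $n\le n_0$; so $p_n(x)=0$ for all $n\ge 1$. Because $(p_n)$ induces the topology of the Fr\'echet space $X$ (in particular $X$ is Hausdorff, so the seminorms separate points), this forces $x=0$. Hence $F=\{0\}$, and taking $n=n_0$ gives $\ker p_{n_0}\cap M=\{0\}$, which is the second alternative.

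**Where the difficulty lies.**
There is essentially no analytic obstacle here; the statement is a soft consequence of monotonicity of the seminorms plus the Hausdorff property, and the only mild point is to notice that the dichotomy is really "for all $n$, infinite-dimensional" versus "exists $n$, trivial", with no intermediate case (a finite nonzero dimension for some $n$) surviving — precisely because stabilization of the decreasing chain collapses any such finite-dimensional limit to $\{0\}$. I would present the argument in the order above: reduce to the negation of the first alternative, invoke monotonicity to get a decreasing chain, stabilize it by finite-dimensionality, and kill the stable value using that $X$ is Hausdorff. The one place to be slightly careful in writing is to record explicitly that $\ker p_{n+1}\subseteq\ker p_n$ follows from $p_n\le p_{n+1}$, since that monotonicity is the hinge of the whole proof.
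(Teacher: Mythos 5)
Your proof is correct and rests on the same two ingredients as the paper's: the chain $\ker p_n\cap M$ is decreasing and eventually finite-dimensional, and the seminorms separate points since $X$ is Hausdorff. The paper phrases the descent slightly differently (repeatedly finding a seminorm that kills one basis vector, so the dimension strictly drops), but your stabilize-then-collapse argument is essentially the same proof.
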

\begin{proof}
Suppose that there exists $n\ge 1$ such that $\ker p_n\cap M$ is finite-dimensional.
Let $e_1,\dots,e_d$ be a basis of $\ker p_n\cap M$. Since $X$ is a Fréchet space, there exists $m\ge n$ such that
$p_m(e_1)\ne 0$. Therefore, the dimension of $\ker p_m\cap M$ is strictly less than $d$. By repeating this argument, we then obtain a seminorm $p_N$ such that $\ker p_N\cap M=\{0\}$.
\end{proof}

In general, the simplest way to obtain a hypercyclic subspace is to construct a convenient basic sequence in $X$ and to consider the closed linear span of this sequence.

\begin{definition}
A sequence $(u_n)_{n\ge 1}$ in a Fréchet space is called \emph{basic} if for every $x\in \overline{\text{span}}\{u_n:n\ge 1\}$, there exists a unique sequence $(a_n)_{n\ge1}$ in $\mathbb{K}$ such that $x=\sum_{n=1}^{\infty}a_nu_n$.
\end{definition}

Let $X$ be a Fréchet space. If $X$ possesses a continuous norm, the existence of basic sequences in $X$ is well known (see \cite{Menet}, \cite{Petersson}).
We now show that if there exists a continuous seminorm $p$ on $X$ such that $\ker p$ is not a subspace of finite codimension, we can generalize the classical construction of basic sequences in Fréchet spaces with a continuous norm to obtain basic sequences $(u_k)\subset X$ such that $p(u_k)=1$ for any $k$ and $\overline{\text{span}}\{u_k:k\ge 1\}\cap \ker p=\{0\}$. Moreover, sufficiently small perturbations of these sequences will remain basic and will be equivalent to the initial sequence.

\begin{definition}
Let $X$ be a Fréchet space.
Two basic sequences $(u_n)$ and $(f_n)$ in $X$ are said to be \emph{equivalent} if for every sequence $(a_n)_{n\ge 1}$ in $\mathbb{K}$, the series $\sum_{n=1}^{\infty} a_n u_n$ converges in $X$ if and only if $\sum_{n=1}^{\infty} a_n f_n$ converges in $X$.
\end{definition}

These sequences will be the key point to obtain criteria about hypercyclic subspaces of type $1$. 

\begin{lemma}[{\cite[Lemma 10.39]{Karl}}]\label{lem bas}
Let $X$ be a Fréchet space. For any finite-dimensional subspace $F$ of $X$, for any continuous seminorm $p$ on $X$ and for any $\varepsilon>0$, there exists a closed subspace $E$ of finite codimension such that for any $x\in E$, for any $y\in F$, we have
\[p(x+y)\ge \max\Big(\frac{p(x)}{2+\varepsilon},\frac{p(y)}{1+\varepsilon}\Big).\]
\end{lemma}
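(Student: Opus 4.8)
The plan is to pass to the quotient normed space and produce $E$ as a finite intersection of kernels of suitably chosen continuous linear functionals. Write $N=\ker p$ and let $\pi\colon X\to \tilde X:=X/N$ be the quotient map, equipped with the norm $\bar p$ induced by $p$, so that $p=\bar p\circ\pi$. The image $\tilde F:=\pi(F)$ is then a finite-dimensional normed space, and everything that matters about $p$ on $F$ is already visible in $\tilde F$; in particular the part of $F$ on which $p$ vanishes collapses to $0$ and causes no trouble. This reduction is precisely what lets me avoid fussing over $\ker p\cap F$ directly.

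Next I would construct norming functionals. Since $\tilde F$ is finite-dimensional, its unit sphere $S=\{\bar y:\bar p(\bar y)=1\}$ is compact, so for a parameter $\delta>0$ to be fixed later I can choose a finite $\delta$-net $\bar y_1,\dots,\bar y_m\in S$. For each $i$ I define a norming functional on $\tilde F$ at $\bar y_i$ and extend it by Hahn-Banach to a functional $\bar\phi_i$ on $\tilde X$ with $|\bar\phi_i(\cdot)|\le\bar p(\cdot)$ and $\bar\phi_i(\bar y_i)=1$. Pulling back, $\phi_i:=\bar\phi_i\circ\pi$ are continuous on $X$ with $|\phi_i(z)|\le p(z)$ for all $z$, and I set $E:=\bigcap_{i=1}^m\ker\phi_i$, a closed subspace of codimension at most $m$.

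The heart of the argument is the lower bound on $p$ in terms of $p(y)$. Fix $x\in E$ and $y\in F$. Since $\phi_i(x)=0$, we get $\phi_i(x+y)=\phi_i(y)=\bar\phi_i(\pi(y))$, hence $p(x+y)\ge|\phi_i(x+y)|=|\bar\phi_i(\pi(y))|$ for every $i$. If $\pi(y)=0$ then $p(y)=0$ and both inequalities are immediate, so assume $\pi(y)\neq0$ and pick a net point $\bar y_i$ with $\bar p\big(\pi(y)/p(y)-\bar y_i\big)\le\delta$. Writing $t=p(y)=\bar p(\pi(y))$ and using $\bar\phi_i(\bar y_i)=1$ together with $|\bar\phi_i|\le\bar p$, a one-line estimate gives $|\bar\phi_i(\pi(y))|\ge t\cdot 1-\bar p(\pi(y)-t\,\bar y_i)\ge(1-\delta)\,p(y)$, whence $p(x+y)\ge(1-\delta)\,p(y)$. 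Choosing $\delta$ so that $1-\delta\ge 1/(1+\varepsilon)$ yields the second entry of the claimed maximum.

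Finally, the first entry follows formally: the triangle inequality gives $p(x)\le p(x+y)+p(y)$, and substituting $p(y)\le(1+\varepsilon)\,p(x+y)$ from the previous step gives $p(x)\le(2+\varepsilon)\,p(x+y)$, i.e.\ $p(x+y)\ge p(x)/(2+\varepsilon)$. I expect the only real obstacle to be bookkeeping rather than conceptual: checking that $\bar p$ is genuinely a well-defined norm on $X/N$, that the lifted $\phi_i$ are truly $p$-dominated (so that $E$ has finite codimension and the inequality $p(x+y)\ge|\phi_i(x+y)|$ holds), and calibrating $\delta$ against $\varepsilon$. The conceptual content — that a finite-dimensional subspace is ``almost norming-complemented'' — is entirely captured by the compactness and Hahn-Banach step.
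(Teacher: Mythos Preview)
Your proof is correct. Note, however, that the paper does not supply its own proof of this lemma: it is quoted verbatim from \cite[Lemma~10.39]{Karl} and used as a black box, so there is no in-paper argument to compare against. That said, your approach---passing to the quotient $X/\ker p$, choosing a finite $\delta$-net on the unit sphere of the image of $F$, extending norming functionals by Hahn--Banach, and taking $E$ to be the intersection of their kernels---is exactly the standard proof one finds in the cited reference, and your derivation of the $p(x)/(2+\varepsilon)$ bound from the $p(y)/(1+\varepsilon)$ bound via the triangle inequality is the right way to finish.
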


\begin{cor}\label{corb}
Let $X$ be a Fréchet space, $(p_n)$ a sequence of continuous seminorms and $M$ an infinite-dimensional subspace such that for any closed subspace $E$ of finite codimension, we have \[E\cap M\not\subset \ker p_1.\]
Then for any $\varepsilon>0$, for any $u_1,\dots,u_n\in X$, there exists $u_{n+1}\in M$ such that $p_1(u_{n+1})=1$ and such that for any $j\le n$, for any  $a_1,\ldots,a_{n+1}\in\K$, we have
\[p_j\Big(\sum_{k=1}^n a_k u_k\Big)\le (1+\varepsilon)p_j\Big(\sum_{k=1}^{n+1}a_k u_k\Big).\]
\end{cor}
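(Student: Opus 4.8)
The plan is to build $u_{n+1}$ by combining Lemma~\ref{lem bas} with the non-triviality hypothesis on $M$. First I would set $F=\mathrm{span}\{u_1,\dots,u_n\}$, which is finite-dimensional, and apply Lemma~\ref{lem bas} to $F$, the seminorm $p=\max_{j\le n}p_j$ (a continuous seminorm since the $p_j$ are), and the given $\varepsilon>0$ (possibly after shrinking it to some $\varepsilon'$ with $1+\varepsilon'\le 1+\varepsilon$). This produces a closed subspace $E$ of finite codimension such that $p(x+y)\ge \frac{p(y)}{1+\varepsilon}$ for all $x\in E$, $y\in F$; in particular, since $p\ge p_j$ and $p(y)=\max_{j\le n}p_j(y)\ge p_j(y)$, we get $p_j(x+y)\le p(x+y)$ is the wrong direction, so more carefully one wants $p_j\big(\sum_{k\le n}a_k u_k\big)\le p\big(\sum_{k\le n}a_k u_k\big)=p(y)\le (1+\varepsilon)p(x+y)$ — but the last quantity must be bounded by $(1+\varepsilon)p_j(x+y)$, which is false in general. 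So the correct choice is to apply the lemma separately, or rather to observe that we only need the estimate with a \emph{fixed} seminorm $p_j$ on the right: I would instead take $E$ from the lemma applied with $p=p_n$ won't work either since $p_j$ for $j<n$ need not dominate. The clean fix: apply Lemma~\ref{lem bas} once for \emph{each} $j\le n$ to get subspaces $E_1,\dots,E_n$ of finite codimension, and set $E=\bigcap_{j\le n}E_j$, still of finite codimension. Then for $x\in E$ and $y\in F$, for every $j\le n$ we have $p_j(x+y)\ge \frac{p_j(y)}{1+\varepsilon}$, i.e. $p_j(y)\le (1+\varepsilon)p_j(x+y)$.

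Next, using the hypothesis that $E\cap M\not\subset\ker p_1$ for every closed finite-codimensional $E$, pick a vector $v\in E\cap M$ with $p_1(v)\ne 0$, and normalize: set $u_{n+1}=v/p_1(v)\in M$, so that $p_1(u_{n+1})=1$. Now given $a_1,\dots,a_{n+1}\in\K$, write $y=\sum_{k=1}^n a_k u_k\in F$ and $x=a_{n+1}u_{n+1}\in E$ (since $E$ is a subspace and $u_{n+1}\in E\cap M\subseteq E$). Then $\sum_{k=1}^{n+1}a_k u_k=x+y$, and for each $j\le n$ the inequality from the previous step gives exactly
\[
p_j\Big(\sum_{k=1}^n a_k u_k\Big)=p_j(y)\le (1+\varepsilon)\,p_j(x+y)=(1+\varepsilon)\,p_j\Big(\sum_{k=1}^{n+1}a_k u_k\Big),
\]
which is the desired conclusion.

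The only real subtlety — and the step I would be most careful about — is making sure the seminorm bookkeeping is right so that a \emph{single} finite-codimensional $E$ works simultaneously for all $j\le n$ while keeping the right-hand side attached to the \emph{same} index $j$; this is handled by intersecting finitely many $E_j$ as above, and finite codimension is preserved under finite intersections. One should also note that the case $n=0$ (no $u_k$ yet) is trivial: just take any $v\in M$ with $p_1(v)\ne0$ — such a $v$ exists by the hypothesis applied to $E=X$ — and normalize. Everything else is a direct substitution into Lemma~\ref{lem bas}, with no convergence or completeness issues since only finitely many vectors are involved.
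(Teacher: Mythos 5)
Your argument is correct and is exactly the intended derivation: the paper states Corollary~\ref{corb} without proof as an immediate consequence of Lemma~\ref{lem bas}, and your route --- apply the lemma to $F=\mathrm{span}\{u_1,\dots,u_n\}$ once for each $p_j$, intersect the finitely many resulting closed finite-codimensional subspaces $E_j$, pick $v\in E\cap M\setminus\ker p_1$ and normalize --- is the standard way to do it. Your observation that a single application with $p=\max_{j\le n}p_j$ would not keep the right-hand side attached to the same index $j$, and that the finite intersection fixes this, is precisely the right bookkeeping.
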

\begin{remark}
If $\ker p_1$ is a subspace of finite codimension, such a subspace $M$ does not exist. However, if $\ker p_1$ is not a subspace of finite codimension, then $M=X$ works and if $\ker p_1=\{0\}$, then each infinite-dimensional subspace $M$ works. Moreover, we notice that if $M\cap \ker p_1=\{0\}$, then $M$ works.
\end{remark}

\begin{theorem}\label{thm bas}
Let $X$ be a Fréchet space, $(p_n)$ an increasing sequence of seminorms defining the topology of $X$, $(\varepsilon_n)_{n\ge 1}$ a sequence of positive real numbers such that $\prod_{n} (1+\varepsilon_n)=K<\infty$. If a sequence $(u_k)_{k\ge 1}\subset X$ satisfies for any $n\in \N$, for any $j\le n$, for any $a_1,\ldots,a_{n+1}\in \K$,
\begin{equation}
p_j\Big(\sum_{k=1}^n a_k u_k\Big)\le (1+\varepsilon_n)p_j\Big(\sum_{k=1}^{n+1}a_k u_k\Big)\quad \text{and}\quad p_1(u_n)=1,\label{lem 1}\end{equation}
then this sequence is basic in $X$ and the closure of the linear span $M_u$ of $(u_k)$ satisfies $M_u\cap \ker p_1=\{0\}$.
Moreover, if $(f_k)_{k\ge 1}\subset X$ satisfies
\[\sum_{k=1}^{+\infty}2Kp_k(u_k-f_k)<1,\]
then $(f_k)$ is a basic sequence in $X$, $(f_k)$ is equivalent to $(u_k)$ and the closure of the linear span $M_f$ of $(f_k)$ satisfies $M_f\cap \ker p_1=\{0\}$.
\end{theorem}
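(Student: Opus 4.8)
The plan is to prove the two halves of the statement in turn, and within each half to use the standard basic-sequence machinery adapted to the seminorm $p_1$ in place of a norm.

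\textbf{Step 1: $(u_k)$ is basic.} The hypothesis \eqref{lem 1} says precisely that the partial-sum maps are uniformly bounded along the filtration $(p_j)$. Concretely, iterating the inequality $p_j(\sum_{k=1}^n a_k u_k)\le (1+\varepsilon_n) p_j(\sum_{k=1}^{n+1} a_k u_k)$ for $n\le m-1$ gives, for any $j\le n\le m$,
\[
p_j\Big(\sum_{k=1}^n a_k u_k\Big)\le \prod_{i=n}^{m-1}(1+\varepsilon_i)\, p_j\Big(\sum_{k=1}^{m} a_k u_k\Big)\le K\, p_j\Big(\sum_{k=1}^{m} a_k u_k\Big),
\]
and letting $m\to\infty$ on any convergent series $x=\sum_k a_k u_k$ yields $p_j(\sum_{k=1}^n a_k u_k)\le K\,p_j(x)$ for all $j\le n$, hence $p_j(\sum_{k=1}^n a_k u_k)\le K p_j(x)$ for \emph{all} $j$ once $n\ge j$ (and the finitely many remaining $j>n$ are controlled too by the same chain). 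This is the Grinblum-type criterion: the coordinate projections $P_n\colon x\mapsto \sum_{k\le n}a_k u_k$ on $M_u$ are equicontinuous, so a vector in $M_u$ has a unique representation as $\sum a_k u_k$. I would state this as: if $\sum a_k u_k=0$ then applying the bound with $j=1$ and using $p_1(u_n)=1$, together with $p_1(\sum_{k\le n}a_k u_k)\le K p_1(0)=0$, forces each partial sum to lie in $\ker p_1$; then an induction on $n$ (peeling off $a_n u_n = (\sum_{k\le n}-\sum_{k\le n-1})a_ku_k$) combined with $p_1(u_n)=1$ forces $a_n=0$. The uniqueness of representation on the whole closed span follows by density and equicontinuity of the $P_n$.

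\textbf{Step 2: $M_u\cap\ker p_1=\{0\}$.} Take $x=\sum_k a_k u_k\in M_u$ with $p_1(x)=0$. By the equicontinuity bound above, $p_1(\sum_{k=1}^n a_k u_k)\le K p_1(x)=0$ for every $n$, so every partial sum is in $\ker p_1$; as in Step 1 this forces $a_n=0$ for all $n$ by induction using $p_1(u_n)=1$, hence $x=0$.

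\textbf{Step 3: the perturbation $(f_k)$.} This is the classical small-perturbation argument (the "principle of small perturbations" for basic sequences), carried out with $p_k$-distances. Set $c=\sum_k 2K p_k(u_k-f_k)<1$. I would first show that for any finitely supported scalars $(a_k)$ and any $j$,
\[
p_j\Big(\sum_k a_k(u_k-f_k)\Big)\le \sum_{k\ge j}|a_k|\,p_j(u_k-f_k)+p_j\Big(\sum_{k<j}a_k(u_k-f_k)\Big),
\]
and bound $|a_k|$ using Step 1: from $p_1(a_k u_k)=|a_k|$ and $p_1(a_ku_k)=p_1(\sum_{i\le k}-\sum_{i\le k-1})\le 2K p_1(\sum_i a_i u_i)$ we get $|a_k|\le 2K\,p_1\big(\sum_i a_i u_i\big)$ for every $k$. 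Since $p_j\ge p_1$, combining these gives
\[
p_j\Big(\sum_k a_k(u_k-f_k)\Big)\le \Big(\sum_k 2K p_k(u_k-f_k)\Big) p_j\Big(\sum_i a_i u_i\Big) = c\, p_j\Big(\sum_i a_i u_i\Big)
\]
(handling the $k<j$ terms by $p_j(u_k-f_k)\le p_k(u_k-f_k)$ absorbed into the same sum — this bookkeeping is the one slightly delicate point). Hence $(1-c)p_j(\sum a_i u_i)\le p_j(\sum a_i f_i)\le (1+c)p_j(\sum a_i u_i)$ for all $j$, which simultaneously shows: (i) $\sum a_i f_i$ converges iff $\sum a_i u_i$ does, i.e. the series share Cauchy behaviour in every $p_j$, giving equivalence; (ii) the map $\sum a_i u_i\mapsto \sum a_i f_i$ extends to a linear homeomorphism $M_u\to M_f$, so $(f_k)$ inherits basicness from $(u_k)$; and (iii) if $x=\sum a_i f_i$ has $p_1(x)=0$ then $p_1(\sum a_i u_i)\le \frac{1}{1-c}p_1(x)=0$, so by Step 2 all $a_i=0$ and $x=0$, i.e. $M_f\cap\ker p_1=\{0\}$.

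\textbf{Main obstacle.} The computations are routine telescoping; the one place demanding care is the treatment of the "low" indices $k<j$ in Step 3, since $p_j(u_k-f_k)$ need not be dominated by $p_k(u_k-f_k)$ when $k<j$ — so I would instead note that the hypothesis $\sum_k 2K p_k(u_k-f_k)<1$ should be read with the \emph{increasing} family $(p_k)$ and that what is really needed is $p_j(u_k-f_k)\le p_{\max(j,k)}(u_k-f_k)$, and then re-index the estimate so that each term $u_k-f_k$ is measured in $p_k$ exactly when $k\ge j$ and the remaining finitely many terms are absorbed using equicontinuity of the partial sums at level $p_j$. Getting this bookkeeping clean — so that a single constant $c<1$ works uniformly in $j$ — is the crux; everything else is the standard Fréchet-space adaptation of the Banach-space perturbation lemma.
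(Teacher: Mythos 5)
Your Steps 1 and 2 follow the same route as the paper (telescoping \eqref{lem 1} to get $p_j(\sum_{k=1}^n a_ku_k)\le Kp_j(\sum_{k=1}^m a_ku_k)$ and hence $|a_n|\le 2Kp_1(x)$, which gives uniqueness of coefficients and $M_u\cap\ker p_1=\{0\}$), and they are essentially correct; only the aside that the seminorms with $j>n$ ``are controlled by the same chain'' is off, since \eqref{lem 1} is assumed only for $j\le n$ --- those indices must instead be handled via the coefficient bound $|a_k|\le 2Kp_1(x)$ together with $p_j(u_k)$, which is what makes the extended partial-sum projections continuous.

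The genuine gap is in Step 3. The displayed two-sided estimate $(1-c)\,p_j(\sum_i a_iu_i)\le p_j(\sum_i a_if_i)\le(1+c)\,p_j(\sum_i a_iu_i)$, uniform in $j$ and for full sums, is false: for $k<j$ the quantity $p_j(u_k-f_k)$ is not dominated by $p_k(u_k-f_k)$ and can be arbitrarily large, so the perturbation term $\sum_{k<j}|a_k|\,p_j(u_k-f_k)$ can swamp $p_j(\sum_i a_iu_i)$. You flag exactly this difficulty, but your proposed repair (``absorb the finitely many low-index terms'') only rescues the \emph{upper} bound, and only with a $j$-dependent constant (that suffices for continuity of $T:\sum a_ku_k\mapsto\sum a_kf_k$). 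The lower bound survives only for tails: $p_j(\sum_{k\ge j}a_kf_k)\ge(1-\delta)\,p_j(\sum_{k\ge j}a_ku_k)$, which is the paper's inequality \eqref{eq7}. With only this tail estimate, your conclusions (ii) and (iii) do not follow in one line. Equivalence of the two series and injectivity of $T$ do follow from the tail estimates, but the assertion that $(f_k)$ is \emph{basic} --- i.e.\ that every element of $\overline{\mathrm{span}}\{f_k\}$ admits a convergent expansion $\sum a_kf_k$ --- amounts to showing that $\mathrm{Im}(T)$ is closed, equivalently that $T^{-1}$ is continuous and $T$ is an isomorphism onto its image. That is precisely the part your proposal replaces with ``the map extends to a linear homeomorphism $M_u\to M_f$,'' and it is the bulk of the paper's proof: given $Tx_n\to f$, one must show the coefficients converge, that $\sum_ka_kf_k$ is Cauchy (using the tail estimates in both directions plus the coefficient bound to control the finitely many low indices), that $f=\sum_ka_kf_k$, and finally that $x_n\to\sum_ka_ku_k$. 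None of this is routine bookkeeping once the uniform two-sided estimate is unavailable, so as written the proposal does not establish that $(f_k)$ is basic.
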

\begin{proof}
We first show that $(u_k)_{k\ge 1}$ is a basic sequence. Let $x=\sum_{k=1}^{\infty}a_ku_k\in X$.
We remark that, by \eqref{lem 1}, we have for any $n\ge1$
\begin{equation}
\begin{aligned}
|a_n|=p_1(a_n u_n)&\le p_1\Big(\sum_{k=1}^na_k u_k\Big)+p_1\Big(\sum_{k=1}^{n-1}a_ku_k\Big)\\
&\le 2Kp_1\Big(\sum_{k=1}^{\infty}a_k u_k\Big)=2Kp_1(x).
\end{aligned}
\label{eq4}
\end{equation}
This property implies that if  $x=\sum_{k=1}^{\infty}a_ku_k$ and $x=\sum_{k=1}^{\infty}b_ku_k$ then for any $n\ge 1$, $a_n=b_n$.
Let $x_n=\sum_{k=1}^{\infty} a_{n,k} u_k$, with $a_{n,k}=0$ for any $k\ge N_n$, be a convergent sequence to some vector $x\in X$. It just remains to prove that there exists a sequence $(a_k)_{k\ge 1}\subset \K$ such that $x=\sum_{k=1}^{\infty}a_ku_k$. 

We deduce from \eqref{eq4} that for any $k\ge 1$, $|a_{m,k}-a_{n,k}|\le 2K p_1(x_m-x_n)\rightarrow 0$ as $m,n\rightarrow \infty$ and therefore $\lim_{n\rightarrow\infty}a_{n,k}=a_k$ for some scalar $a_k$.
Let $j,n\ge 1$. For any $N\ge j$, we have by \eqref{lem 1}
\begin{align*}
p_j\Big(\sum_{k=1}^Na_{n,k}u_k-\sum_{k=1}^Na_ku_k\Big)&=\lim_m p_j\Big(\sum_{k=1}^Na_{n,k}u_k-\sum_{k=1}^Na_{m,k}u_k\Big)\\
&\le \lim_m Kp_j(x_n-x_m)= K p_j(x_n-x).\end{align*}
Thus, for any $N\ge \max\{N_n,j\}$, we have
\[p_j\Big(x-\sum_{k=1}^{N}a_ku_k\Big)\le p_j(x-x_n)+p_j\Big(\sum_{k=1}^Na_{n,k}u_k-\sum_{k=1}^Na_ku_k\Big)
\le (1+K) p_j(x_n-x).\]
We deduce that $x=\sum_{k=1}^{\infty}a_k u_k$ and thus the sequence $(u_k)$ is basic. Moreover, if $M_u=\overline{\text{span}}\{u_k:k\ge 1\}$, then for any $x\in M_u$, $x= \sum_{k=1}^{\infty}a_ku_k$ with $|a_k|\le 2Kp_1(x)$. We conclude that $M_u\cap \ker p_1=\{0\}$.\\

We now consider a sequence $(f_k)_{k\ge 1}\subset X$ such that
\begin{equation}\delta:=\sum_{k=1}^{+\infty}2Kp_k(u_k-f_k)<1.\label{eq5}\end{equation}
In order to prove that $(f_k)$ is a basic sequence, we consider the operator $T:M_u\rightarrow X$ given by $T\big(\sum_{n=1}^{\infty}a_n u_n\big)= \sum_{n=1}^{\infty}a_n f_n$. This operator is well-defined as for any $j\ge 1$, for any $j\le m\le n$,
\begin{align}
p_j\Big(\sum_{k=m}^{n}a_k f_k\Big)&\le p_j\Big(\sum_{k=m}^{n}a_k (f_k-u_k)\Big)+ p_j\Big(\sum_{k=m}^{n}a_k u_k\Big)\nonumber\\
&\le \sum_{k=m}^n |a_k| p_j(f_k-u_k) + p_j\Big(\sum_{k=m}^{n}a_k u_k\Big)\nonumber\\
&\le \sum_{k=m}^n 2Kp_1\Big(\sum_{i=m}^{n}a_i u_i\Big) p_k(f_k-u_k) + p_j\Big(\sum_{k=m}^{n}a_k u_k\Big) \quad\text{by \eqref{eq4}}\nonumber\\
&\le (1+\delta)p_j\Big(\sum_{k=m}^{n}a_k u_k\Big) \quad\text{by \eqref{eq5}}
\label{eq6}
\end{align}
and the operator $T$ is continuous as, for any $j\ge 1$, we have with the same reasoning:
\begin{align*}
p_j\Big(T\Big(\sum_{k=1}^{\infty}a_k u_k\Big)\Big)&\le 
\sum_{k=1}^\infty 2Kp_1\Big(\sum_{i=1}^{\infty}a_i u_i\Big) p_j(f_k-u_k) + p_j\Big(\sum_{k=1}^{\infty}a_k u_k\Big)\\
&\le \sum_{k=1}^{j-1}2Kp_1\Big(\sum_{i=1}^{\infty}a_i u_i\Big) p_j(f_k-u_k)+(1+\delta) p_j\Big(\sum_{k=1}^{\infty}a_k u_k\Big)\\
&\le \Big(1+\delta+\sum_{k=1}^{j-1}2K p_j(f_k-u_k)\Big) p_j\Big(\sum_{k=1}^{\infty}a_k u_k\Big).
\end{align*}

We seek to prove that $T$ is an isomorphism between $M_u$ and $\text{Im}(T)$.
We remark that for any $\sum_{k=1}^{\infty}a_k u_k\in M_u$, for any $j\ge 1$, we have by \eqref{eq4} and \eqref{eq5}
\begin{align}
p_j\Big(\sum_{k=j}^{\infty}a_k f_k\Big)
&\ge p_j\Big(\sum_{k=j}^{\infty}a_k u_k\Big)-p_j\Big(\sum_{k=j}^{\infty}a_k (f_k-u_k)\Big)\nonumber\\
&\ge p_j\Big(\sum_{k=j}^{\infty}a_k u_k\Big)- \sum_{k=j}^{\infty} 2Kp_j\Big(\sum_{i=j}^{\infty}a_i u_i\Big) p_k(f_k-u_k)\nonumber\\
&\ge  p_j\Big(\sum_{k=j}^{\infty}a_k u_k\Big)- \delta p_j\Big(\sum_{k=j}^{\infty}a_k u_k\Big)\nonumber\\
&\ge (1-\delta) p_j\Big(\sum_{k=j}^{\infty}a_k u_k\Big).\label{eq7}
\end{align}
Thanks to \eqref{eq7} with $j=1$, we can already assert that $T$ is injective, as for any $x\in M_u\backslash\{0\}$, we have $p_1(x)\ne0$. We also deduce that $\text{Im}(T)\cap \ker p_1=\{0\}$.

Let $x_n=\sum_{k=1}^{\infty}a_{n,k}u_k$ be a sequence in $M_u$ such that $Tx_n$ converges to $f$ in $X$ as $n\rightarrow \infty$. To prove that $T$ is an isomorphism, we still have to show that $x_n$ converges in $X$. Thanks to \eqref{eq4} and \eqref{eq7}, we know that for any $k\ge 1$, $a_{n,k}\rightarrow a_k$ for some scalar $a_k$ as $n\rightarrow \infty$. We therefore seek to prove that $x_n$ converges to $\sum_{k=1}^\infty a_k u_k$. To this end, we begin by showing that $\sum_{k=1}^\infty a_k f_k$ converges.
For any $N\ge M\ge j$, any $n\ge 1$, we have
\begin{align*}
&p_j\Big(\sum_{k=M}^N a_k f_k\Big)\\
&\quad= \lim_m p_j\Big(\sum_{k=M}^N a_{m,k} f_k\Big)\\
&\quad\le \lim_m p_j\Big(\sum_{k=M}^N a_{m,k} f_k-\sum_{k=M}^N a_{n,k} f_k\Big)+p_j\Big(\sum_{k=M}^N a_{n,k} f_k\Big)\\
&\quad\le (1+\delta)\lim_m p_j\Big(\sum_{k=M}^N (a_{m,k}- a_{n,k}) u_k\Big)+p_j\Big(\sum_{k=M}^N a_{n,k} f_k\Big) \quad\text{by \eqref{eq6}}\\
&\quad\le 2K(1+\delta)\limsup_m p_j\Big(\sum_{k=j}^{\infty} (a_{m,k}-a_{n,k}) u_k\Big)+p_j\Big(\sum_{k=M}^N a_{n,k} f_k\Big)\quad\text{by \eqref{lem 1}}\\
&\quad\le \frac{2K(1+\delta)}{1-\delta}\limsup_m p_j\Big(\sum_{k=j}^{\infty} (a_{m,k}-a_{n,k}) f_k\Big) +p_j\Big(\sum_{k=M}^N a_{n,k} f_k\Big)\quad\text{by \eqref{eq7}}\\
&\quad\le \frac{2K(1+\delta)}{1-\delta}\limsup_m p_j(Tx_m-Tx_n)\\
&\quad\quad+\frac{2K(1+\delta)}{1-\delta}\limsup_m p_j\Big(\sum_{k=1}^{j-1} a_{m,k} f_k-\sum_{k=1}^{j-1} a_{n,k} f_k\Big)
+p_j\Big(\sum_{k=M}^N a_{n,k} f_k\Big)\\
&\quad\le \frac{2K(1+\delta)}{1-\delta} p_j(f-Tx_n)\\
&\quad\quad+ \frac{2K(1+\delta)}{1-\delta} p_j\Big(\sum_{k=1}^{j-1} a_{k} f_k-\sum_{k=1}^{j-1} a_{n,k} f_k\Big)
+p_j\Big(\sum_{k=M}^N a_{n,k} f_k\Big).
\end{align*}
If we choose $n\ge 1$ such that 
\[\frac{2K(1+\delta)}{1-\delta} p_j(f-Tx_n)
+ \frac{2K(1+\delta)}{1-\delta}p_j\Big(\sum_{k=1}^{j-1} a_{k} f_k-\sum_{k=1}^{j-1} a_{n,k} f_k\Big)<\varepsilon\]
and $L\ge j$ such that, for any $N\ge M\ge L$, we have
\[p_j\Big(\sum_{k=M}^N a_{n,k} f_k\Big)<\varepsilon,\]
then we deduce that, for any $N\ge M\ge L$, we have
\[p_j\Big(\sum_{k=M}^N a_k f_k\Big)\le 2\varepsilon.\]
The sequence $(\sum_{k=1}^{N}a_kf_k)_N$ is thus a Cauchy sequence and therefore a convergent sequence.
Moreover, as for any $N\ge M\ge j$, we have by \eqref{eq7}
\[p_j\Big(\sum_{k=M}^Na_ku_k\Big)\le \frac{1}{1-\delta}p_j\Big(\sum_{k=M}^Na_kf_k\Big),\]
we deduce that the sequence $(\sum_{k=1}^{N}a_ku_k)$ is also a Cauchy sequence and therefore $\sum_{k=1}^{\infty}a_ku_k$ converges.

We can now show that $f=\sum_{k=1}^{\infty} a_k f_k$. Indeed, since $\sum_{k=1}^{\infty} a_k f_k$ and $\sum_{k=1}^{\infty} a_{n,k} f_k$ are convergent series, for any $n,j\ge 1$, there exists $M\ge j$ such that we have
\[p_j\Big(\sum_{k=M+1}^{\infty}a_kf_k-\sum_{k=M+1}^{\infty}a_{n,k}f_k\Big)<\frac{1}{n}\]
and thus as previously
\begin{align*}
p_j\Big(\sum_{k=j}^{\infty}a_kf_k-\sum_{k=j}^{\infty}a_{n,k}f_k\Big)
&\le \frac{1}{n} + p_j\Big(\sum_{k=j}^{M}a_kf_k-\sum_{k=j}^{M}a_{n,k}f_k\Big)\\
&=  \frac{1}{n} + \lim_m p_j\Big(\sum_{k=j}^{M}a_{m,k}f_k-\sum_{k=j}^{M}a_{n,k}f_k\Big)\\
&\le \frac{1}{n} + \frac{2K(1+\delta)}{1-\delta} p_j(f-Tx_n)\\
&\quad+ \frac{2K(1+\delta)}{1-\delta} p_j\Big(\sum_{k=1}^{j-1} a_{k} f_k-\sum_{k=1}^{j-1} a_{n,k} f_k\Big)
\xrightarrow{n\rightarrow \infty} 0.
\end{align*}
We conclude that $Tx_n\rightarrow \sum_{k=1}^{\infty}a_kf_k$ as $n\rightarrow \infty$ and thus that $f=\sum_{k=1}^{\infty}a_kf_k$. 

Finally, we notice that $x_n$ converges to $\sum_{k=1}^\infty a_k u_k$ because by \eqref{eq7}, for any $j\ge 1$, 
\begin{align*}
&p_j\Big(\sum_{k=1}^{\infty}a_k u_k -x_n\Big)\\
&\quad\le
p_j\Big(\sum_{k=1}^{j-1}a_k u_k -\sum_{k=1}^{j-1}a_{n,k}u_k\Big)
+p_j\Big(\sum_{k=j}^{\infty}a_k u_k -\sum_{k=j}^{\infty}a_{n,k}u_k\Big)\\
&\quad\le p_j\Big(\sum_{k=1}^{j-1}a_k u_k -\sum_{k=1}^{j-1}a_{n,k}u_k\Big)
+\frac{1}{1-\delta}p_j\Big(\sum_{k=j}^{\infty}a_k f_k -\sum_{k=j}^{\infty}a_{n,k}f_k\Big)\xrightarrow{n\rightarrow \infty} 0.
\end{align*}
We conclude that $T$ is an isomorphism between $M_u$ and $\text{Im}(T)$.
The subspace $M':=\text{Im}(T)$ is thus a closed infinite-dimensional subspace for which $(f_k)$ is a basis. Moreover, we have $M'\cap \ker p_1=\{0\}$ and $(f_k)$ is equivalent to $(u_k)$. That concludes the proof.
\end{proof}

We deduce from Corollary~\ref{corb} and Theorem~\ref{thm bas} that we can construct a basic sequence stable under small perturbations in each infinite-dimensional subspace $M\subset X$ such that for any closed subspace $E$ of finite codimension, \[E\cap M\not\subset \ker p_1.\] If $M$ is closed, we can in fact reformulate this condition as follows:
\begin{prop}
Let $X$ be a Fréchet space, $p$ a continuous seminorm on $X$ and $M$ a closed infinite-dimensional subspace of $X$.
The following assertions are equivalent:
\begin{enumerate}
\item for any closed subspace $E$ of finite codimension in $X$, $E\cap M\not\subset \ker p$,
\item the subspace $M\cap \ker p$ is of infinite codimension in $M$.
\end{enumerate}
\end{prop}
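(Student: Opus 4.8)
The plan is to prove the two implications in contrapositive form, i.e.\ to show that the negation of (1)---there exists a closed subspace $E$ of finite codimension in $X$ with $E\cap M\subset\ker p$---is equivalent to the negation of (2)---the subspace $M\cap\ker p$ has finite codimension in $M$.

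The easy direction is $\neg(1)\Rightarrow\neg(2)$. Given such an $E$, one has $E\cap M\subset M\cap\ker p$, and the natural injection $M/(E\cap M)\hookrightarrow X/E$ shows that $E\cap M$ has finite codimension in $M$ (at most the codimension of $E$ in $X$); hence the larger subspace $M\cap\ker p$ also has finite codimension in $M$. No real work is needed here.

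The main direction is $\neg(2)\Rightarrow\neg(1)$, and this is where essentially all the content lies. Assume $M\cap\ker p$ has finite codimension $d$ in $M$. Since $p$ is continuous, $\ker p$ is closed, so $N:=M\cap\ker p$ is closed in $M$ and the quotient $M/N$ is a Hausdorff topological vector space of dimension $d$, hence linearly homeomorphic to $\K^{d}$. Composing the continuous quotient map $M\to M/N$ with this homeomorphism and with the $d$ coordinate functionals of $\K^{d}$ produces continuous linear functionals $\psi_{1},\dots,\psi_{d}$ on $M$ whose common kernel is exactly $N$. Because $M$ is a subspace of the locally convex space $X$, the Hahn--Banach theorem extends each $\psi_{i}$ to a continuous linear functional $\varphi_{i}$ on $X$. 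Set $E:=\bigcap_{i=1}^{d}\ker\varphi_{i}$; this is a closed subspace of codimension at most $d$ in $X$, and for $x\in E\cap M$ we get $\psi_{i}(x)=\varphi_{i}(x)=0$ for all $i$, so $x\in N\subset\ker p$. Thus $E\cap M\subset\ker p$, which is precisely $\neg(1)$.

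I expect the only delicate points to be the two invocations of standard locally convex theory: identifying the finite-dimensional Hausdorff quotient $M/N$ with $\K^{d}$ so that its coordinate functionals are continuous, and extending continuous functionals from $M$ to $X$ via Hahn--Banach. It is worth observing that the continuity of $p$ enters only to guarantee that $N$ is closed (so that $M/N$ is Hausdorff), while the closedness of $M$ is in fact not used in the argument.
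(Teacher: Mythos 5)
Your proof is correct and follows essentially the same route as the paper: the easy direction via $E\cap M\subset M\cap\ker p$ together with finite codimension of $E\cap M$ in $M$, and the main direction by producing a continuous linear map from $M$ to $\K^d$ with kernel $M\cap\ker p$ (you construct it explicitly through the Hausdorff finite-dimensional quotient $M/N\cong\K^d$, where the paper simply asserts its existence) and then extending it to $X$ by Hahn--Banach to obtain the closed finite-codimensional subspace $E$. Your side remark that the closedness of $M$ is not actually needed is also accurate.
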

\begin{proof}
We show the equivalence between $\neg (1)$ and $\neg (2)$.

$\neg (1)\Rightarrow \neg (2)$. Let $E\subset X$ be a closed subspace of finite codimension such that
$E\cap M\subset \ker p$. Then the subspace $E\cap M$ is a subspace of finite codimension in $M$ and since
$E\cap M\subset M\cap \ker p\subset M$, we deduce that $M\cap \ker p$ is of finite codimension in $M$.

$\neg (2)\Rightarrow \neg (1)$. We suppose that $M\cap \ker p$ is of finite codimension in $M$. Since $M\cap \ker p$ is closed, there exists $n\ge 1$ and a continuous linear map $f:M\to \K^n$ such that $\ker f=M\cap \ker p$. By Hahn-Banach, there then exists a continuous linear map $\tilde{f}:X\rightarrow \K^n$ such that $\tilde{f}_{|M} =f$. We conclude that $E=\ker \tilde{f}$ is a closed subspace of finite codimension and  that 
\[E\cap M\subset \ker f\subset \ker p.\]
\end{proof}

Thanks to Theorem~\ref{thm bas}, we can extend known criteria about hypercyclic subspaces for Fréchet spaces with a continuous norm to Fréchet spaces without continuous norm by adapting their statement.

The first criterion about hypercyclic subspaces was established by Montes \cite{Montes} in 1996. He has proved that if $T$ is a continuous linear operator on a Banach space $X$ satisfying the Hypercyclicity Criterion for $(n_k)$ and if there exists a closed infinite-dimensional subspace $M_0$ such that for any $x\in M_0$, $(T^{n_k} x)_k$ converges to~$0$, then $T$ possesses a hypercyclic subspace. This criterion has then been generalized to operators on Fréchet spaces with a continuous norm (see \cite{Bonet}, \cite{Petersson}) and to sequences of operators on separable Banach spaces \cite{Leon} and on separable Fréchet spaces with a continuous norm \cite{Menet2} satisfying a certain condition $(C)$.  An elegant alternative proof of this criterion via left-multiplication operators has also been obtained by Chan~\cite{Chan}.

\begin{definition}\label{def C fre}
Let $X$ be a Fréchet space and $Y$ a topological vector space. A sequence $(T_n)$ of operators from $X$ to $Y$ satisfies \emph{condition} (C) if there exist an increasing sequence $(n_k)$ of positive integers and a dense subset $X_0\subset X$ such that
\begin{enumerate}
\item for every $x\in X_0$, $\lim_{k\rightarrow \infty}T_{n_k}x=0$;
\item for every continuous seminorm $p$ on $X$, $\bigcup_k T_{n_k}(\{x\in X: p(x)<1\})$ is dense in $Y$.
\end{enumerate}
\end{definition}

One knows that the above mentioned criterion does not remain true for Fréchet spaces without continuous norm. Indeed, Bonet, Mart\'inez and Peris~\cite{Bonet} have shown that the multiple of the bilateral backward shift $2B$ is mixing on the space $X=\{(x_n)_{n\in \Z}:\sum_{n=0}^{\infty}|x_n|<\infty\}$ endowed with the seminorms
$p_n(x)=\sum_{k=-n}^{\infty}|x_k|$ and does not possess any hypercyclic subspace. However, if we consider $M_0=\{(x_n)_{n\in \Z}:x_n=0 \ \text{for}\ n\ge 1\}$, we remark that for any $x\in M_0$, $((2B)^{k} x)_k$ converges to $0$ as $k\rightarrow \infty$. In fact, the problem in this counterexample does not seem to be the absence of a continuous norm on $X$ but the fact that $M_0\cap \ker p_n$ is a subspace of finite codimension in $M_0$ for any $n\ge 1$. Nevertheless, once $M_0\cap \ker p_1$ is a subspace of infinite codimension in $M_0$, the construction of basic sequences of Theorem \ref{thm bas} allows us to generalize the criterion established by Montes \cite{Montes} to Fréchet spaces without continuous norm.

We fix a Fréchet space $X$, an increasing sequence of seminorms $(p_n)$ defining the topology of $X$ and a separable topological vector space $Y$ whose topology is defined by a sequence of seminorms $(q_n)$. We can then state the following result whose the proof is similar to the proof of \cite[Theorem 1.5]{Menet2}.

\begin{theorem}\label{M0}
Let $(T_n)$ be a sequence of continuous linear operators from $X$ to $Y$. If $(T_n)$ satisfies condition $(C)$ for $(n_k)$ and if there exists a closed infinite-dimensional subspace $M_0$ of $X$ such that 
\begin{enumerate}
\item $M_0\cap \ker p_1$ is a subspace of infinite codimension in $M_0$,
\item for any $x\in M_0$, $T_{n_k} x\rightarrow 0$ as $k\rightarrow \infty$,
\end{enumerate}
then $(T_n)$ possesses a hypercyclic subspace of type $1$.
\end{theorem}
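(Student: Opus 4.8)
The plan is to mimic the classical Montes-type construction, but to carry it out \emph{inside} $M_0$ using the basic-sequence machinery of Corollary~\ref{corb} and Theorem~\ref{thm bas}. Concretely, I will build by induction a sequence $(u_k)\subset M_0$ and a sequence $(f_k)\subset X$ of small perturbations, chosen so that: (i) the $u_k$ satisfy the hypotheses \eqref{lem 1} of Theorem~\ref{thm bas} with respect to some summable $(\varepsilon_n)$ (this is where Corollary~\ref{corb} is invoked, its applicability guaranteed by hypothesis (1) together with the Proposition following Theorem~\ref{thm bas}, since $M_0\cap\ker p_1$ of infinite codimension in $M_0$ is exactly the condition $E\cap M_0\not\subset\ker p_1$ for all finite-codimensional $E$); (ii) $f_k$ is $p_k$-close to $u_k$ so that $\sum_k 2Kp_k(u_k-f_k)<1$, ensuring by Theorem~\ref{thm bas} that $(f_k)$ is basic, equivalent to $(u_k)$, and that $M_f:=\overline{\operatorname{span}}(f_k)$ satisfies $M_f\cap\ker p_1=\{0\}$, i.e. $M_f$ will be a candidate hypercyclic subspace of type~$1$; and (iii) the $f_k$ carry the universality data coming from condition~$(C)$.

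**The inductive step.** At stage $k$ I will first use Corollary~\ref{corb} (applied to $M_0$, the seminorms $(p_n)$, and $\varepsilon_k$) to pick $u_k\in M_0$ with $p_1(u_k)=1$ and the required domination of partial sums; this keeps everything inside $M_0$, so by hypothesis (2) we automatically have $T_{n_j}u_k\to 0$ as $j\to\infty$ for every $k$. Next, using a countable dense sequence $(y_\ell)$ in $Y$ and clause (2) of condition~$(C)$, I choose an index $n_{j_k}$ (large, so that the already-constructed $u_1,\dots,u_{k}$ and $f_1,\dots,f_{k-1}$ are moved very close to $0$ by $T_{n_{j_k}}$, using clause (1) of $(C)$ for the dense set and clause (2) of the Theorem's hypothesis for the $u_i$'s) and a vector $v_k$ with $p_k(v_k)$ small — small enough to respect $\sum 2Kp_k(u_k-f_k)<1$ — such that $T_{n_{j_k}}v_k$ is within $1/k$ of the target $y_{\ell}$ for an appropriate bookkeeping choice of $\ell$. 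Setting $f_k=u_k+v_k$ completes the step. Standard bookkeeping (each target $y_\ell$ revisited infinitely often along the $n_{j_k}$'s) ensures that for a suitable nonzero $f=\sum a_k f_k\in M_f$ the orbit $(T_n f)$ is dense.

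**Verifying hypercyclicity of every nonzero vector.** Take $f=\sum_{k\ge 1}a_kf_k\in M_f\setminus\{0\}$; since $M_f\cap\ker p_1=\{0\}$ we have $p_1(f)\ne0$, and by the estimate \eqref{eq4}-type bound carried over through the equivalence of $(f_k)$ and $(u_k)$, the coefficients satisfy $|a_k|\le 2Kp_1(f)$, so the tail $\sum_{k>N}a_kf_k\to 0$ uniformly in a controlled way. Writing $T_{n_{j_m}}f = T_{n_{j_m}}\big(\sum_{k<m}a_kf_k\big) + a_m T_{n_{j_m}}f_m + T_{n_{j_m}}\big(\sum_{k>m}a_kf_k\big)$: the first block is small by the stage-$m$ choice of $n_{j_m}$ (it was chosen to annihilate $f_1,\dots,f_{m-1}$ up to $1/m$, using $T_{n_{j_m}}u_i\to 0$ for $i<m$ and $T_{n_{j_m}}v_i\to 0$ which needs also controlling the $v_i$'s under later $T$'s — handled by demanding at each stage that $v_i\in\ker p_N$-type smallness or, more simply, by the density-set argument applied to $v_i$ as well), the middle term is close to the target by construction, and the third block is small because its seminorms are dominated by $p_1(f)$ times a convergent tail (uniformly over all of $(T_n)$ — this uses that $T_{n_{j_m}}f_k$ for $k>m$ can be made uniformly small by inserting the $v_k$ into deeper kernels $\ker p_{N_k}$ with $N_k\uparrow\infty$, so that $p$-seminorms of $v_k$ vanish for the relevant $p$). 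The main obstacle is precisely this third point: controlling $\sum_{k>m}a_kT_{n_{j_m}}f_k$ \emph{uniformly in $m$}, since we only control $T_{n_{j_k}}$ on vectors chosen before stage $k$. The standard fix, which I will follow, is to also require at stage $k$ that $v_k$ (hence $f_k-u_k$) lie in $\ker p_{j}$ for all $j<$ (number of targets used so far), exploiting that $\ker p_j$ has infinite codimension inside $M_0$... actually more precisely that the perturbation vectors $v_k$ can be chosen in $\bigcap_{i<k}\ker q_i$-preimages via clause (2) of $(C)$ with the seminorm taken large, so that $T_{n_{j_m}}v_k$ is tiny for all $k>m$; combined with $T_{n_{j_m}}u_k\to0$ as $m$ grows relative to $k$ being the \emph{wrong} direction — the honest resolution is that we need the tail estimate to hold for the specific $f$, and since the orbit is only required dense (not that every subsequence converges), it suffices to extract, for each target $y_\ell$ and each $\varepsilon$, one index where $T_nf$ is $\varepsilon$-close to $y_\ell$; this localizes the argument to finitely many stages and removes the uniformity issue. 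This is exactly the argument of \cite[Theorem 1.5]{Menet2}, and the perturbation estimates of Theorem~\ref{thm bas} are what make it survive the passage from $(u_k)$ to $(f_k)$ and the absence of a continuous norm.
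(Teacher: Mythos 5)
Your proposal follows essentially the same route as the paper, which itself only indicates that the proof is ``similar to the proof of \cite[Theorem 1.5]{Menet2}'': you correctly translate hypothesis (1), via the proposition following Theorem~\ref{thm bas}, into the hypothesis of Corollary~\ref{corb} so that the sequence $(u_k)$ can be built inside $M_0$, and you use Theorem~\ref{thm bas} to guarantee that the perturbed sequence $(f_k)$ spans a closed subspace meeting $\ker p_1$ trivially, i.e.\ a candidate of type $1$, onto which the standard Le\'on--M\"uller bookkeeping is grafted. The only wobble is your back-and-forth on the tail $\sum_{k>m}a_kT_{n_{j_m}}f_k$: the clean resolution is not ``localizing to finitely many stages'' (the tail is infinite even for a single index) but the split $f_k=u_k+v_k$, where $\sum_{k>m}a_ku_k$ is a single vector of the \emph{closed} subspace $M_0$ so hypothesis (2) applies to it directly, while each later perturbation $v_k$ is chosen with $p$-seminorms (for $p$ large) so small that the finitely many operators already committed at earlier stages map it into a prescribed neighbourhood of $0$ --- exactly the past-control bookkeeping of the cited proof, which you do in fact mention.
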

\begin{remark}\label{rem2}
A simple argument (see \cite{Bernal2}, \cite[Remark 10.9]{Karl}) likewise allows us to replace \emph{(2)} by the condition: for any $x\in M_0$, $(T_{n_k} x)_k$ converges in $Y$.
\end{remark}

Le\'on and Müller \cite{Leon} have established another sufficient condition for sequences of operators on Banach spaces to have a hypercyclic subspace. This criterion relies on the existence of a convenient non-increasing sequence of infinite-dimensional closed subspaces $(M_j)$. It has also been generalized to the case of Fréchet spaces with a continuous norm in \cite{Menet2}. By adding a condition on the subspaces $(M_j)$, we can extend this criterion to Fréchet spaces without continuous norm: 

\begin{theorem}\label{Mk}
Let $(T_n)$ be a sequence of continuous linear operators from $X$ to $Y$.
If $(T_n)$ satisfies condition $(C)$ for $(n_k)$ and if there exists a non-increasing sequence of infinite-dimensional closed subspaces $(M_j)_{j\ge 1}$ of $X$
such that 
\begin{enumerate}
\item for any $j\ge 1$, $M_j\cap \ker p_1$ is a subspace of infinite codimension in $M_j$,
\item for each $n\ge 1$, there exist a positive number $C_n$ and two integers $m(n)$, $k(n)\ge 1$ such that for any $j\ge k(n)$, for any $x\in M_j$,
\[q_n(T_{n_j}x)\le C_n p_{m(n)}(x),\]
\end{enumerate}
then $(T_n)$ possesses a hypercyclic subspace of type $1$.
\end{theorem}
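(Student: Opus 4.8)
The plan is to mimic the proof of the analogous result on Fréchet spaces with a continuous norm (the Le\'on--M\"uller criterion as generalized in \cite{Menet2}), replacing every use of the continuous norm by the seminorm $p_1$ together with the basic sequences furnished by Corollary~\ref{corb} and Theorem~\ref{thm bas}. Concretely, I would build by induction a sequence $(u_k)_{k\ge1}$ together with a decreasing sequence of indices so that each $u_k$ lies in $M_{j_k}$ for a rapidly growing $j_k$, satisfies $p_1(u_k)=1$, and satisfies the telescoping estimates \eqref{lem 1} with respect to a summable sequence $(\varepsilon_n)$ with $\prod_n(1+\varepsilon_n)=K<\infty$. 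This is exactly what Corollary~\ref{corb} permits at each step, since hypothesis (1) (that $M_j\cap\ker p_1$ has infinite codimension in $M_j$) guarantees, via the Proposition following Theorem~\ref{thm bas}, that $M_j$ is an admissible choice of $M$ in the Corollary. Theorem~\ref{thm bas} then tells us that the resulting $(u_k)$ is basic, that $M_u=\overline{\mathrm{span}}\{u_k\}$ satisfies $M_u\cap\ker p_1=\{0\}$ (so any hypercyclic subspace we extract inside a small perturbation will be of type~$1$), and crucially that all sufficiently small perturbations $(f_k)$ of $(u_k)$ remain basic, equivalent to $(u_k)$, and still have closed span meeting $\ker p_1$ trivially.

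Interleaved with this construction I would use condition $(C)$ to perform the usual approximation. Enumerate a countable family $(y_\ell)$ that is dense in $Y$ and, more carefully, a countable dense set of pairs describing "the $\ell$-th basis vector should, along some $n_{k}$, push to within $1/\ell$ of $y_\ell$ in $q_n$". At stage $k$, having already chosen $u_1,\dots,u_{k-1}$, I first pick, using part~(2) of condition~$(C)$ (density of $\bigcup_k T_{n_k}(\{p< 1\})$), a vector $v_k$ with $p_{m}(v_k)$ small for the currently relevant $m$ and with $T_{n_{r_k}}v_k$ close to the target; here the estimate (2) of the present theorem, $q_n(T_{n_j}x)\le C_n p_{m(n)}(x)$ valid for $x\in M_j$ with $j\ge k(n)$, is what lets me control $q_n$ of the \emph{tails} $\sum_{k>N}a_kf_k$ of a general element of the span, since those tails lie (approximately, up to the perturbation) in $M_{j_N}$ and $p_{m(n)}$ of the tail is dominated by $2Kp_1$ of the whole vector by \eqref{eq4}. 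I then let $u_k$ be $v_k$ corrected by a small element of $M_{j_k}$ so as to make $p_1(u_k)=1$ and to satisfy the Corollary~\ref{corb} inequalities relative to $u_1,\dots,u_{k-1}$, which is possible because the set of admissible correction directions in $M_{j_k}$ is still of infinite codimension modulo $\ker p_1$.

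From the $(u_k)$ one passes, exactly as in \cite[Theorem 1.5 / the Le\'on--M\"uller argument]{Menet2}, to the actual hypercyclic vectors: any $x=\sum_k a_k f_k$ in the span of the perturbed sequence $(f_k)$, with not all $a_k$ zero, has a first nonzero coefficient, say $a_N\ne 0$, and one shows that the partial action $T_{n_j}(a_N f_N+\cdots)$ inherits, up to errors made summably small by the perturbation control $\sum_k 2Kp_k(u_k-f_k)<1$ and by the smallness of the $p_m$-norms chosen at each stage, the density property arranged for the single vector $f_N$; one must here also use part~(1) of condition~$(C)$, that $T_{n_k}$ kills a dense set, to handle the "future" coefficients $a_k f_k$ with $k>N$ via condition~(2) of the theorem, and handle the finitely many "past-but-already-zero" terms trivially. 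The closure $M_f$ of $\mathrm{span}\{f_k\}$ is closed and infinite-dimensional by Theorem~\ref{thm bas}, every nonzero element is hypercyclic by the above, and $M_f\cap\ker p_1=\{0\}$, so it is a hypercyclic subspace of type~$1$.

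The main obstacle, and the place where some genuine bookkeeping is required rather than a verbatim transcription, is the bound on the tails: in the continuous-norm setting one controls $q_n$ of $\sum_{k>N}a_kf_k$ by a single norm, whereas here one must feed $p_{m(n)}$-estimates through inequality~(2) of the theorem, which is only valid for vectors lying in $M_j$, while $\sum_{k>N}a_kf_k$ lies only in a perturbation of $M_{j_{N+1}}$; reconciling the index $m(n)$ demanded by the theorem with the perturbation sizes $p_k(u_k-f_k)$ and with the $\varepsilon_n$ chosen for Theorem~\ref{thm bas} forces the inductive choices to be made in the right order (first fix $n$, then $m(n)$, $k(n)$, $C_n$, then choose $j_k$ large and $\varepsilon_k$, $p_k(u_k-f_k)$ small accordingly). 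Once that ordering is set up, the convergence of all the relevant series and the density computation go through as in \cite{Menet2}, and no continuous norm is ever needed.
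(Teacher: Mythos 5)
Your proposal is correct and follows essentially the route the paper intends: the paper states Theorem~\ref{Mk} without a written proof, relying on the reader to adapt the Le\'on--M\"uller argument of \cite[Theorem 1.5]{Menet2} by replacing the continuous norm with $p_1$ and using Corollary~\ref{corb} (applicable to each $M_j$ by hypothesis (1) via the equivalence in the Proposition following Theorem~\ref{thm bas}) together with the perturbation stability of Theorem~\ref{thm bas}, exactly as you describe. Your identification of the tail estimate --- feeding hypothesis (2) through $p_{m(n)}$ of the part of the tail lying in $M_{j_{N+1}}$ while handling the small $X_0$-corrections via condition $(C)(1)$ --- is precisely the point where the adaptation requires care, and your ordering of the inductive choices resolves it.
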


On the other hand, a criterion for having no hypercylic subspace is given in \cite{Leon} for sequences of operators on Banach spaces. It has been adapted for operators on Fréchet spaces in \cite{Karl} and for sequences of operators on Fréchet spaces in \cite{Menet2}. These results directly give us a condition for having no hypercyclic subspaces of type $1$.

\begin{theorem}
Let $(T_n)$ be a sequence of continuous linear operators from $X$ to $Y$.
If there exist a sequence $(E_n)$ of closed subspaces of finite codimension in $X$, positive numbers $C_n\rightarrow \infty$ and a continuous seminorm $q$ on $Y$ such that for any $n\ge 1$, for any $x\in E_n$, we have
\[q(T_{n}x)\ge C_np_n(x),\]
then $(T_n)$ does not possess any hypercyclic subspace of type $1$.
\end{theorem}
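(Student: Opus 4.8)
The plan is to show the contrapositive at the level of a fixed hypercyclic subspace: if $M$ is any closed infinite-dimensional subspace of $X$ carrying a vector whose orbit under $(T_n)$ is dense, and if moreover $M$ is of type $1$ (so that by Proposition~\ref{prop2type} and the preceding proposition $M\cap\ker p_n=\{0\}$ for some $n$, hence $p_n$ restricts to a norm on $M$), then the hypotheses force a contradiction. So suppose such an $M$ exists, fix $N$ with $M\cap\ker p_N=\{0\}$, and let $x\in M$ be a hypercyclic vector. The idea is the classical Montes/Le\'on--M\"uller transversality argument: the sequence $(E_n)$ of finite-codimensional closed subspaces intersects $M$ in subspaces $E_n\cap M$ of finite codimension in $M$, so for each $n$ we may pick a unit-type vector $y_n\in E_n\cap M$ (normalized so that, say, $p_N(y_n)=1$; this is possible once $n\ge N$ because $p_N$ is a norm on $M$ and $E_n\cap M$ is infinite-dimensional, in particular nonzero and not contained in $\ker p_N$). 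The inequality $q(T_n y_n)\ge C_n p_n(y_n)\ge C_n p_N(y_n)=C_n\to\infty$ then says the unit sphere of $(M,p_N)$ is mapped by $(T_n)$ to vectors of $q$-norm tending to infinity along the subsequence of indices $n$ for which we can also control things — and this is exactly what obstructs hypercyclicity inside $M$.

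The key steps, in order: (i) reduce to a fixed type-$1$ hypercyclic subspace $M$ and fix $N$ with $p_N$ a norm on $M$; (ii) for each $n\ge N$ produce $y_n\in E_n\cap M$ with $p_N(y_n)=1$, using that $E_n\cap M$ has finite codimension in $M$ hence is infinite-dimensional; (iii) combine $q(T_n y_n)\ge C_n p_n(y_n)$ with $p_n\ge p_N$ (increasing seminorms) to get $q(T_n y_n)\ge C_n\to\infty$; (iv) derive the contradiction with hypercyclicity. For step (iv) one argues as follows: if $x\in M$ is hypercyclic then $x\ne 0$, so $p_N(x)=:\alpha>0$; writing an arbitrary $z\in M$ as $z = \lambda y_n + w$ with $w\in (\text{a fixed finite-codim complement of } E_n\cap M \text{ in } M)$ is not quite clean, so instead one uses the standard device: since $\dim M=\infty$ and each $E_n\cap M$ is finite-codimensional, one can choose the $y_n$ together with the hypercyclic $x$ to remain in a common $E_n\cap M$ after scaling — more precisely, one shows that the orbit $(T_n x)$ cannot be dense because any $z\in M$ close enough (in $p_N$) to a suitable multiple of $y_n$ inherits a large value $q(T_n z)$, while the target space $Y$ contains points of arbitrarily small $q$-norm. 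This is the part that needs the precise bookkeeping from \cite[Theorem~3.\,?]{Leon}, adapted as in \cite{Menet2} and \cite{Karl}.

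Concretely, the cleanest route for (iv): fix a point $v\in Y$ with $q(v)=1$ (possible since $q$ is a seminorm on the nontrivial $Y$, or replace $v$ by $0$ and use $q(v)=0$). For the hypercyclic vector $x\in M$, density of the orbit gives infinitely many $n$ with $q(T_n x - v)<1/2$, hence $q(T_n x)<3/2$ along that set $S$ of indices. On the other hand, decompose $x$ relative to $E_n\cap M$: since this subspace has finite codimension $d_n$ in $M$, and since $p_N$ is a norm there, for $n\in S$ large we can find $y_n'\in E_n\cap M$ with $p_N(x-y_n')$ as small as we like (density of a finite-codimensional subspace in $M$ under $p_N$ fails in general, so this must instead be run the other way). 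Given that subtlety, I would in fact follow \cite{Menet2} verbatim: the correct statement there shows that the existence of $(E_n)$, $C_n\to\infty$, $q$ with $q(T_n x)\ge C_n p_n(x)$ on $E_n$ prevents a hypercyclic subspace by a Baire-category / gliding-hump argument on the unit ball of $(M,p_N)$, and since $M$ being type~$1$ is exactly the hypothesis that makes $(M,p_N)$ a Banach-like setting, the Fr\'echet-with-continuous-norm proof transfers without change.

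The main obstacle I anticipate is precisely the normalization and the transfer of the Banach-space argument: one must check that $M\cap\ker p_N=\{0\}$ genuinely upgrades $(M,p_N)$ to a Banach space (completeness of $M$ in the $p_N$-norm is not automatic — $M$ is closed in the Fr\'echet topology, which is finer), so the argument of \cite{Leon} cannot be quoted mechanically. The fix is that we do not need completeness of $(M,p_N)$; we only need that $p_N$ separates points of $M$ and that $p_n\ge p_N$, which together with $C_n\to\infty$ already kills density of any orbit in $M$ by the elementary estimate above. So the real work is packaging the elementary estimate correctly; everything else is bookkeeping identical to \cite{Menet2}.
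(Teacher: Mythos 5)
Your reduction in step (i) is exactly the paper's: a type-$1$ hypercyclic subspace $M$ satisfies $M\cap\ker p_J=\{0\}$ for some $J$, so $p_J$ is a norm on $M$, and your remark that completeness of $(M,p_J)$ is irrelevant is also correct (all convergence can be taken in the topology of $X$, in which $M$ is closed). The gap is in step (iv). The estimate $q(T_ny_n)\ge C_n\to\infty$ for a sequence of \emph{distinct} vectors $y_n\in E_n\cap M$ with $p_J(y_n)=1$ does not, by itself, contradict anything: to defeat a hypercyclic subspace you must exhibit a single nonzero $x\in M$ whose orbit $(T_nx)_n$ is not dense, and your $y_n$ give no information about the orbit of any fixed vector. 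Your intermediate patch --- that any $z$ close in $p_J$ to a suitable multiple of $y_n$ inherits a large value of $q(T_nz)$ --- fails for two reasons: the hypothesis $q(T_nz)\ge C_np_n(z)$ is available only for $z\in E_n$, and $p_J$-closeness to $y_n$ neither places $z$ in $E_n$ nor bounds $p_n(z)$ from below (the seminorms increase, so $p_n\ge p_J$ points the wrong way for nearby $z$). Hence the concluding claim that ``the elementary estimate above already kills density of any orbit in $M$'' is not justified.

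What is actually needed --- and what the paper imports wholesale by citing the proof of \cite[Theorem 1.13]{Menet2} --- is a gliding-hump construction of one vector $x=\sum_k\alpha_ku_k\in M$ with $q(T_nx)\to\infty$: one chooses $u_k$ in $M$ intersected with finitely many of the $E_n$ at a time (possible because each $E_n\cap M$ has finite codimension in $M$), with $p_k(\alpha_k u_k)$ small enough that the series converges in $X$ and hence in the closed subspace $M$, and with $\alpha_k$ large enough that on each block of indices the term $\alpha_ku_k$ dominates via $q(T_n(\alpha_ku_k))\ge C_np_n(\alpha_ku_k)$ while the already-fixed partial sum is controlled by continuity; the fact that $p_J$ is a norm on $M$ guarantees $x\ne0$. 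You do recognize that this bookkeeping lives in \cite{Leon} and \cite{Menet2} and offer to ``follow \cite{Menet2} verbatim,'' which is indeed all the paper does after the reduction; but as written your self-contained argument stops short of producing the non-hypercyclic vector, and the shortcut you propose in its place does not close the proof.
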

\begin{proof}
Suppose that $M$ is a hypercyclic subspace of type $1$.
By definition, there then exists $J\ge 1$ such that for any non-zero vector $x\in M$, we have $p_{J}(x)\ne 0$. Therefore, the proof of \cite[Theorem 1.13]{Menet2} allows us to construct a vector $x\in M$ such that ${q(T^nx)\rightarrow \infty}$ as $n\rightarrow \infty$: this is a contradiction with the fact that $M$ is a hypercyclic subspace.
\end{proof}

Thanks to a simplification of the previous criterion in the case of an operator $T:X\rightarrow X$ given by the author in \cite{Menet2}, we obtain the following simpler criterion:

\begin{theorem}\label{thm E}
Let $T$ be a continuous linear operator from $X$ to itself. If there exists a continuous seminorm $q$ on $X$ such that for any $n\ge 1$, there exist a closed subspace $E$ of finite codimension in $X$, $C>1$ and $m\ge 1$ such that for any $x\in E$,
\[q(T^{m}x)\ge Cp_n(x),\]
then $T$ does not possess any hypercyclic subspace of type $1$.
\end{theorem}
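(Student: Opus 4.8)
The plan is to deduce Theorem~\ref{thm E} from the more general criterion stated just above it (the one with sequences $(E_n)$, $C_n\to\infty$, and a seminorm $q$). The key observation is that the hypothesis of Theorem~\ref{thm E}, while phrased for a single seminorm $q$ and arbitrary $n$, can be iterated to produce, for a fixed power, an arbitrarily large constant on a finite-codimensional subspace. Concretely, I would fix $n\ge 1$ and show how to build a subspace $E_n'$ of finite codimension, a power $N_n$, and a constant $D_n$ with $D_n\to\infty$, such that $q(T^{N_n}x)\ge D_n p_n(x)$ for all $x\in E_n'$.

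\textbf{Step 1: Iteration to amplify the constant.} Apply the hypothesis with the given $n$ to get $E^{(1)}$, $C^{(1)}>1$, $m^{(1)}$ with $q(T^{m^{(1)}}x)\ge C^{(1)}p_n(x)$ on $E^{(1)}$. Since $q$ is a continuous seminorm, $q\le p_\ell$ for some $\ell\ge n$; apply the hypothesis again with this $\ell$ in place of $n$ to get $E^{(2)}$, $C^{(2)}>1$, $m^{(2)}$ with $q(T^{m^{(2)}}y)\ge C^{(2)}p_\ell(y)\ge C^{(2)}q(y)$ on $E^{(2)}$. Now on the subspace $E^{(1)}\cap (T^{m^{(1)}})^{-1}(E^{(2)})$ — which is again of finite codimension, since $T^{m^{(1)}}$ is continuous and $E^{(2)}$ has finite codimension — we obtain
\[
q\big(T^{m^{(1)}+m^{(2)}}x\big)=q\big(T^{m^{(2)}}(T^{m^{(1)}}x)\big)\ge C^{(2)}q\big(T^{m^{(1)}}x\big)\ge C^{(1)}C^{(2)}p_n(x).
\]
Repeating $r$ times multiplies the constant to at least $(C^{(1)})^r$ (using $C^{(j)}>1$, or uniformly bounded below by some fixed $C>1$ on the finitely many relevant indices), while the codimension of the intersection stays finite and the power is some fixed integer $N_n$. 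Choosing $r=r(n)$ large enough makes the resulting constant $D_n$ exceed $C_n:=n$; set $E_n$ to be the constructed finite-codimensional subspace.

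\textbf{Step 2: Invoke the general no-subspace criterion.} After a relabelling so that the powers $N_n$ form an increasing sequence $(N_n)$ (or by padding: note $q(T^{N+j}x)$ can be controlled below in terms of $q(T^Nx)$ only after further intersections — cleaner is to just feed the sequence $(N_n)$ directly into the previous theorem, which is stated for an arbitrary sequence $(T_n)$ of operators, here $T_n:=T^{n}$), we have closed subspaces $E_n$ of finite codimension, constants $D_n\to\infty$, and the seminorm $q$ with $q(T^{N_n}x)\ge D_n p_n(x)\ge D_n p_n(x)$ for $x\in E_n$. This is exactly the hypothesis of the preceding theorem applied to the sequence $(T^{N_n})_n$, which therefore has no hypercyclic subspace of type $1$. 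Since a hypercyclic subspace for $T$ (i.e.\ for the sequence $(T^n)$) would yield one for the subsequence $(T^{N_n})$, we conclude $T$ has no hypercyclic subspace of type~$1$.

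\textbf{The main obstacle} I anticipate is bookkeeping in Step~1: one must check that at each iteration the newly formed subspace $E^{(j)}\cap (T^{m^{(1)}+\cdots+m^{(j-1)}})^{-1}(E^{(j)})$ still has finite codimension (immediate from continuity of $T$ and finite codimension of each factor), and that the constants genuinely compound — this needs the uniform lower bound $C>1$, which holds because only finitely many applications of the hypothesis (indexed by seminorms $p_n,p_\ell,\dots$ up to some bounded index) are used for each fixed $n$. A secondary technical point is matching the power structure: the cleanest route is to not attempt to realize a single power for all indices but rather to observe that the previous theorem's statement already accommodates a sequence of operators, so passing to $(T^{N_n})_n$ with a strictly increasing $(N_n)$ (achievable by taking $r(n)$ large enough to also force $N_n>N_{n-1}$) suffices, and the reduction from $T$ to this subsequence is the standard fact that every hypercyclic subspace is also hypercyclic along any subsequence of the defining sequence.
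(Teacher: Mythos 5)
Your overall strategy---iterate the single estimate with $C>1$ to manufacture constants tending to infinity and then invoke the preceding criterion---is the natural one, and the paper itself gives no argument here beyond deferring to \cite{Menet2}. But as written your proposal has two genuine gaps.

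The more serious one is the final reduction in Step 2. You apply the general criterion to the subsequence $(T^{N_n})_n$ and then assert that a hypercyclic subspace for $T$ would also be one for $(T^{N_n})_n$. That implication goes the wrong way: density of the full orbit $\{T^i x : i\ge 1\}$ does not imply density of the sub-orbit $\{T^{N_n}x : n\ge 1\}$, so a vector $x$ with $q(T^{N_n}x)\to\infty$ along your (sparse, with unbounded gaps) subsequence can perfectly well still be hypercyclic for $T$. What is actually needed is a vector in the candidate subspace for which some fixed continuous seminorm of $T^i x$ tends to infinity along \emph{all} powers $i$; this is how the contradiction is reached in the proof of the preceding theorem. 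In the classical single-power situation one recovers the intermediate powers from the estimate along multiples of $m$ via $q(T^{km}x)\le D\,p_s(T^{km-i}x)$ for $0< i\le m$, with $D$ and $s$ uniform because $i$ ranges over a bounded set; with your powers $N_n$ the gaps $N_{n+1}-N_n$ are unbounded and this propagation is precisely the point that must be organized. Note also that your parenthetical ``padding'' remark has the inequality in the wrong direction: continuity of $T^j$ bounds $q(T^{N+j}x)$ from \emph{above} by a seminorm of $T^N x$, i.e.\ it lets you bound \emph{earlier} powers from below using \emph{later} good ones, not the reverse.

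The second gap is in Step 1: you use ``$q\le p_\ell$ for some $\ell$,'' but a continuous seminorm in general only satisfies $q\le M p_\ell$ for some constant $M>0$. Each link of your chain then produces the factor $C^{(2)}/M$ rather than $C^{(2)}$, which need not exceed $1$, so the compounded constant $C^{(1)}(C^{(2)}/M)^{r-1}$ may tend to $0$ instead of $\infty$. This is not a cosmetic issue: the entire amplification rests on converting $q(T^m y)\ge C p_\ell(y)$ into $q(T^m y)\ge C' q(y)$ with $C'>1$, and that is automatic only when $q$ is dominated by $p_\ell$ with constant $1$. Indeed, on $\ell^2$ with $p_n=\|\cdot\|$ for all $n$ and $q=2\|\cdot\|$, the hypothesis as you use it is satisfied (with $C=2$, $m=1$, $E=\ker e_1^*$) by the weighted backward shift with weights $w_k=(k+1)/k$, which does possess a hypercyclic subspace; so some normalization tying $q$ to the sequence $(p_n)$ must be made explicit and exploited, rather than assumed silently.
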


\section{Existence of hypercyclic subspaces}
\subsection{Differential operators on $C^{\infty}(\R)$}\label{cinfty}
Let $C^{\infty}(\R)$ be the Fréchet space of infinitely differentiable functions on $\R$ endowed with the increasing sequence of seminorms
\[p_k(f):=\sum_{l=0}^ {k-1} \sup_{x\in[-k,k]}|f^{(l)}(x)|,\]
where we denote by $f^{(l)}$ the $l$th derivative of $f$.
The space $C^{\infty}(\R)$ is an important example of Fréchet spaces without continuous norm and one can wonder if the differential operators possess hypercyclic subspaces on this space.

In the case of the space of entire functions $H(\C)$, we know that each differential operator $\varphi(D)$, where $D$ is the derivative operator and $\varphi$ is a non-constant entire function of exponential type, possesses a hypercyclic subspace in $H(\C)$. This result has been proved by Petersson~\cite{Petersson} for entire functions which are not polynomials, by Shkarin~\cite{Shkarin} for the operator $D$ and completed by the author~\cite{Menet2} in the case of non-constant polynomials. In the case of the space $C^{\infty}(\R)$, we remark that the map $\varphi(D)$ will be a well-defined operator on $C^{\infty}(\R)$ if and only if $\varphi$ is a polynomial. Indeed, by Borel theorem~\cite{Borel}, we know that for any sequence $(a_n)_{n\ge 0}\in \R^{\Z_+}$, there exists a function $f\in C^{\infty}(\R)$ such that for any $n\ge 0$, $f^{(n)}(0)=a_n$.

Let $P$ be a non-constant polynomial. We already know thanks to Godefroy and Shapiro~\cite{Godefroy} that the operator $P(D)$ is hypercyclic on $C^{\infty}(\R)$. We now show that for every non-constant polynomial $P$, the operator $P(D)$ possesses a hypercyclic subspace of type~$1$ in $C^{\infty}(\R)$.

\begin{lemma}\label{lemmasup}
Let $T$ be an operator on $C^{\infty}(\R)$. If for any $k\ge 1$, \[T(\ker p_k)\subset \ker p_k,\] then for any closed subspace $E$ of finite codimension in $C^{\infty}(\R)$, any $\varepsilon>0$, any $N\ge 1$, there exists $f\in E$ such that $p_1(f)=1$ and for any $1\le l,k\le N$,
\[p_k(T^l f)\le \varepsilon p_{k+1}(f).\]
\end{lemma}
\begin{proof}
Let $E$ be a closed subspace of finite codimension in $C^{\infty}(\R)$ and $d$ the codimension of $E$. Let $\varepsilon>0$ and $N\ge 1$. We let $I_{j,k}=]k+\frac{j}{d+1},k+\frac{j+1}{d+1}[$ for any $0\le j\le d$, any $0\le k\le N$. For any $0\le j\le d$, we then choose $f_{j,0},\dots, f_{j,N}\in C^{\infty}(\R)$ such that
\begin{itemize}
\item for any $0\le k\le N$, $\text{supp}(f_{j,k})\subset I_{j,k}$;
\item $p_1(f_{j,0})=1$;
\item for any $1\le l,k\le N$,
\[
p_k\Big(T^l\Big(\sum_{i=0}^{k-1}f_{j,i}\Big)\Big)\le \varepsilon \sup_{x\in I_{j,k}} |f_{j,k}(x)|.
\]
\end{itemize}
Let $f_j=\sum_{k=0}^N f_{j,k}, j=0,\dots, d$. Since for any $k\ge 1$, $T(\ker p_k)\subset \ker p_k$ and $f_{j,k}\in \ker p_k$, we deduce that for any $0\le j\le d$, any $1\le l,k\le N$,
\begin{align}
p_k(T^lf_j)= p_k\Big(T^l\Big(\sum_{i=0}^{k-1}f_{j,i}\Big)\Big) &\le \varepsilon \sup_{x\in I_{j,k}} |f_{j,k}(x)|\nonumber\\
&\le \varepsilon \sup_{x\in [-k-1,k+1]} |f_{j}(x)|\le \varepsilon p_{k+1}(f_j).
\label{max sup}
\end{align}
Since $E$ is a subspace of finite codimension $d$ and $f_0,\dots, f_d$ are clearly linearly independent, there exists $a_0,\dots, a_d$ with $\max\{|a_j|:0\le j\le d\}=1$ such that $f:=a_0f_0+\cdots+a_df_d\in E$. Since the supports of the $f_j$ are disjoint, we have 
\[p_1(f)=\max_{0\le j\le d}|a_j|p_1(f_j)=\max_{0\le j\le d}|a_j|p_1(f_{j,0})=1.\]
Moreover, we deduce from \eqref{max sup} that for any $1\le l,k\le N$,
\begin{align*}
p_k(T^l f)&\le \sum_{j=0}^{d}|a_j| p_k(T^l f_j)\le \varepsilon \sum_{j=0}^{d}|a_j| p_{k+1}(f_j)\\
&\le (d+1)\varepsilon \max_{0\le j\le d} |a_j| p_{k+1}(f_j)\le (d+1)\varepsilon p_{k+1}(f).
\end{align*}
\end{proof}
\begin{theorem}\label{thmCinf}
Let $T$ be an operator on $C^{\infty}(\R)$. If $T$ satisfies condition $(C)$ and if for any $k\ge 1$, $T(\ker p_{k})\subset \ker p_k$, then $T$ possesses a hypercyclic subspace of type~$1$.
\end{theorem}
\begin{proof}
Let $(n_k)_{k\ge 1}$ be an increasing sequence of positive integers such that $T$ satisfies condition $(C)$ for $(n_k)$ and $(\varepsilon_n)_{n\ge 1}$ a sequence of positive real numbers such that $\prod_{n} (1+\varepsilon_n)\le 2$.
Thanks to Lemma~\ref{lem bas} and Lemma~\ref{lemmasup}, we know that there exists a sequence $(u_k)_{k\ge 1}\subset C^{\infty}(\R)$ satisfying \eqref{lem 1} for the sequence $(\varepsilon_n)_{n\ge 1}$ and such that for any $1\le j,n\le k$,
\begin{equation}
p_n(T^{n_j} u_k)\le \frac{1}{2^k}p_{n+1}(u_k).
\label{eqcin}
\end{equation}
We show that $T$ satisfies the assumptions of Theorem~\ref{Mk} for the sequence of closed infinite-dimensional subspaces
$M_j=\overline{\text{span}}\{u_k:k\ge j\}$, $m(n)=n+1$, $k(n)=n+2$ and $C_n=1$.

By Theorem~\ref{thm bas}, we know that
$\overline{\text{span}}\{u_k:k\ge 1\}\cap \ker p_1=\{0\}$. The subspace $M_j\cap \ker p_1$ is thus of infinite codimension in $M_j$. On the other hand, since $(u_n)_{n\ge 1}$ is a basic sequence (Theorem~\ref{thm bas}), we know that for any $x\in M_j$, we have $x=\sum_{k=j}^{\infty}a_k u_k$ for some sequence $(a_k)_{k\ge 1}\in \K^{\N}$ and we deduce from \eqref{lem 1} and \eqref{eqcin} that
for any $n\ge 1$, any $j\ge n+2$, any $x\in M_j$, $x=\sum_{k=j}^{\infty}a_k u_k$, we have 
\begin{align*}
p_n(T^{n_j}x)&\le\sum_{k=j}^{\infty}p_n(T^{n_j}(a_ku_k))
\le \sum_{k=j}^{\infty}\frac{1}{2^k}p_{n+1}(a_ku_k)\\
&\le \sum_{k=j}^{\infty}\frac{4}{2^k}p_{n+1}(x)
\le p_{n+1}(x).
\end{align*}
\end{proof}
\begin{cor}
For any non-constant polynomial $P$, the operator $P(D)$ possesses a hypercyclic subspace of type~$1$ in $C^{\infty}(\R)$.
\end{cor}
\begin{proof}
Let $P$ be a non-constant polynomial.
We know that the operator $P(D)$ satisfies the Hypercyclicity Criterion~\cite{Godefroy}. Therefore, since for any $k\ge 1$, \[P(D)(\ker p_{k})\subset \ker p_k,\]
we deduce from Theorem~\ref{thmCinf} that $P(D)$ possesses a hypercyclic subspace of type~$1$ in $C^{\infty}(\R)$.
\end{proof}
\begin{remark}
For any non-constant polynomial $P$, the operator $P(D)$ does not possess any hypercyclic subspace of type~$2$ in $C^{\infty}(\R)$ since there is no hypercyclic vector for $P(D)$ in $\ker p_1$.
\end{remark}

\subsection{Universal series}\label{section univ}

Let $A$ be a Fréchet space of sequences such that the coordinate projections $P_m:A\rightarrow\K,$ $(a_n)_{n\ge 0}\mapsto a_m$
are continuous for all $m\ge 0$, and the set of polynomials $\{a=(a_n)_{n\geq 0}\in\K^{\Z_+}\ :\{n\ge 0:\ a_n\ne 0\} \hbox{ is finite}\}$ is contained  and dense in $A.$ 
We denote by $(e_n)_{n\geq 0}$ the canonical basis of $\K^{\Z_+}$ and by $v(a)$ and $d(a)$ the valuation and the degree of the polynomial $a\in A$.

Let $X$ be a separable topological vector space whose topology is given by an increasing sequence of seminorms $(q_j)$ and let $(x_n)_{n\ge 0}$ be a sequence in $X$. We let $S_k:A\rightarrow X$ be the operator defined by \[S_k((a_n)_{n\ge 0})=\sum_{n=0}^{k}a_nx_n\] and $\mathcal{U}\cap A$ the set of hypercyclic vectors for $(S_k)$. A sequence $a\in\mathcal{U}\cap A$ is called a \emph{universal series}. An interesting subclass of universal series is the class of restricted universal series.

\begin{definition}
Let $S^A_k:A\rightarrow A$ be the operator defined by \[S^A_k((a_n)_{n\ge 0})=\sum_{n=0}^{k}a_ne_n.\]
A sequence $a\in A$ is a \emph{restricted 
universal series} if, for every $x\in X$,
there exists an increasing sequence $(n_k)$ in $\N$ such that
\begin{equation} S_{n_k}a\rightarrow x \text{ in $X$ and }
S^A_{n_k}a\rightarrow a\text{ in $A$ as }k\rightarrow
\infty. \end{equation}
We denote by $\mathcal{U}_A$ the set of such series.
\end{definition}
For more details about universal series, we refer to the article of Bayart, Grosse-Erdmann, Nestoridis and Papadimitropoulos~\cite{Bayart2}.\\

In 2010, Charpentier \cite{Charpentier} proved that if $\mathcal{U}_A\ne \emptyset$ and $A$ is a Banach space then $\mathcal{U}_A$ is spaceable i.e. $\mathcal{U}_A\cup\{0\}$ contains a closed infinite-dimensional subspace. Using the construction of basic sequences in Fréchet spaces with a continuous norm, the author \cite{Menet} has generalized this result to Fréchet spaces with a continuous norm. The proof of these results depends on three factors: the existence of a restricted universal series, the possibility to construct a basic sequence of polynomials with valuation as large as desired and the fact that a sufficiently small perturbation of this basic sequence remains basic. We prove that, thanks to Theorem \ref{thm bas}, these conditions are satisfied in the case where $\mathcal{U}_A\ne \emptyset$ and $A$ is a Fréchet space admitting a continuous seminorm $p$ such that $\ker p$ is not a subspace of finite codimension.

\begin{theorem}\label{univ}
If $\mathcal{U}_A$ is non-empty and $A$ is a Fréchet space with a continuous seminorm $p$ such that $\ker p$ is not a subspace of finite codimension, then $\mathcal{U}_A$ is spaceable.
\end{theorem}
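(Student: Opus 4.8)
The plan is to transfer the argument of Charpentier~\cite{Charpentier} and the author~\cite{Menet} from the continuous-norm case, whose three ingredients — a restricted universal series, a basic sequence of polynomials of arbitrarily large valuation, and the stability of such a sequence under small perturbations — are now all provided by Corollary~\ref{corb} and Theorem~\ref{thm bas}. Since $p$ is continuous we may assume $p=p_1$ (replace $(p_n)$ by the equivalent increasing sequence $(\max(p,p_n))_n$); then $\ker p_1$ has infinite codimension in $A$. Fix positive reals $(\varepsilon_n)$ with $\prod_n(1+\varepsilon_n)=K<\infty$. The aim is to build a basic sequence of polynomials $(u_k)$ with $p_1(u_k)=1$ and $v(u_k)\to\infty$, then perturb it into a basic sequence $(f_k)$, equivalent to $(u_k)$, whose closed linear span $M$ is a closed infinite-dimensional subspace with $M\setminus\{0\}\subset\mathcal U_A$; this yields spaceability.

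First I would construct $(u_k)$. For $N\ge1$ put $M_N:=\overline{\text{span}}\{e_l:l\ge N\}$. As the coordinate projections are continuous, $\text{span}\{e_0,\dots,e_{N-1}\}$ is a topological complement of $M_N$, so $M_N$ has finite codimension in $A$; hence $M_N\cap\ker p_1$ cannot have finite codimension in $M_N$, for otherwise it would have finite codimension in $A$, contradicting the hypothesis on $\ker p_1$. By the proposition following Theorem~\ref{thm bas}, $M_N$ thus satisfies the hypothesis of Corollary~\ref{corb}. Build $(u_k)$ inductively: given polynomials $u_1,\dots,u_n$, pick $N>d(u_n)$; a minor variant of Corollary~\ref{corb} (the selection there takes place in closed subspaces of finite codimension of $M_N$, in each of which the polynomials of valuation $\ge N$ are dense) produces a polynomial $u_{n+1}\in M_N$ with $p_1(u_{n+1})=1$ satisfying \eqref{lem 1} for $j\le n$, and with $v(u_{n+1})\ge N>d(u_n)$. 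Theorem~\ref{thm bas} then tells us that $(u_k)$ is basic, that $M_u\cap\ker p_1=\{0\}$, and that every $(f_k)$ with $\sum_k2Kp_k(u_k-f_k)<1$ is again basic, equivalent to $(u_k)$, and spans a closed infinite-dimensional subspace meeting $\ker p_1$ only in $0$.

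It remains to perform the perturbation. Fix $a\in\mathcal U_A$. Applying the definition of $\mathcal U_A$ with $x=0$ gives an increasing sequence $(N_i)$ with $S_{N_i}a\to0$ in $X$ and $S^A_{N_i}a\to a$ in $A$; in particular the blocks $S^A_{N_j}a-S^A_{N_i}a$ tend to $0$ in $A$, and the vectors $S_{N_j}a-S_{N_i}a$ tend to $0$ in $X$, as $i\to\infty$. Exactly as in \cite{Charpentier}, \cite{Menet}, one then perturbs each $u_k$ by suitable blocks of $a$ supported on coordinates above $d(u_k)$, taking the truncation levels far enough along $(N_i)$ that the $f_k:=u_k+w_k$ satisfy $\sum_k2Kp_k(w_k)<1$ and that the auxiliary series arising in the final verification converge in $X$. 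By Theorem~\ref{thm bas}, $M:=\overline{\text{span}}\{f_k\}$ is then a closed infinite-dimensional subspace with $M\cap\ker p_1=\{0\}$, and $(f_k)$ is basic and equivalent to $(u_k)$. Finally, for $y=\sum_kc_kf_k\in M\setminus\{0\}$ one checks, by the same computation as in the continuous-norm case and using that $\mathcal U_A$ is invariant under multiplication by nonzero scalars and under addition of polynomials, that $y\in\mathcal U_A$. Hence $M\setminus\{0\}\subset\mathcal U_A$, so $\mathcal U_A$ is spaceable.

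The genuinely new point — and the one I would single out as the crux — is the construction of a basic sequence of polynomials of large valuation in the absence of a continuous norm; this is exactly what Corollary~\ref{corb} and Theorem~\ref{thm bas} supply, once one observes that the tail subspaces $M_N$ meet the hypothesis of Corollary~\ref{corb}. The combinatorial part, namely arranging the perturbation so that \emph{every} nonzero element of the closed span, and not merely each $f_k$, is a restricted universal series, is unchanged from \cite{Charpentier}, \cite{Menet}, and is the step demanding the most care; I would transcribe it from there.
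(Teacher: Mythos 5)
Your proposal is correct and follows essentially the same route as the paper: both reduce the theorem to producing, via Corollary~\ref{corb} and Theorem~\ref{thm bas}, a perturbation-stable basic sequence of polynomials of arbitrarily large valuation (the rest of the argument being deferred to \cite{Charpentier}, \cite{Menet}), and both justify this by a finite-codimension computation. The only divergence is that the paper applies Corollary~\ref{corb} directly to the non-closed space $P_N$ of polynomials of valuation at least $N$, whereas you pass through the closed subspace $\overline{P_N}$ and then rely on your parenthetical density claim to return to polynomials; that claim is true but is left unjustified, and its proof is exactly the paper's computation (for $E$ closed of finite codimension, $\overline{E\cap P_N}$ and $E\cap\overline{P_N}$ are closed subspaces of the same finite codimension in $\overline{P_N}$, one contained in the other, hence equal).
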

\begin{proof}
The proof is similar to the proof for Fréchet spaces with a continuous norm \cite{Menet}. Therefore, we only justify the possibility to construct a basic sequence of polynomials with valuation as large as desired thanks to Corollary~\ref{corb} and Theorem~\ref{thm bas}. To this end, we prove that the subspace $M_k$ of polynomials with valuation at least $k$ satisfies
\[E\cap M_k\not\subset \ker p \quad\text{for any closed subspace $E$ of finite codimension}.\] Since $<e_0,\dots,e_{k-1}>\oplus M_k$ is dense, we have \[A=\overline{<e_0,\dots,e_{k-1}>\oplus M_k}= <e_0,\dots,e_{k-1}>\oplus \overline{M_k}.\] The subspace $\overline{M_k}$ is thus of finite codimension in $A$.
Let $E$ be a closed subspace of finite codimension in $X$. There exists a finite-dimensional subspace $F$ such that $(E\cap M_k)\oplus F=M_k$. We deduce that
\[\overline{M_k}=\overline{(E\cap M_k)\oplus F}=\overline{(E\cap M_k)}\oplus F.\]
Since $\overline{M_k}$ is a subspace of finite codimension in $A$, we conclude that $\overline{E\cap M_k}$ is a subspace of finite codimension in $A$ and thus that we cannot have
$E\cap M_k\subset \ker p$ as $\ker p$ is not a subspace of finite codimension.
\end{proof}

\subsection{Existence of mixing operators with hypercyclic subspaces}\label{existence}

The existence of hypercyclic and even mixing operators on any infinite-dimensional separable Fréchet space is now well known. In fact, the existence of hypercyclic operators has been obtained on any infinite-dimensional separable Banach space by Ansari \cite{Ansari} and Bernal \cite{Bernal}, and on any infinite-dimensional separable Fréchet space by Bonet and Peris \cite{Bonet2}. The construction of these operators is based on the hypercyclity of perturbations of the identity by a weighted shift. Grivaux has then noticed in \cite{Grivaux} that these operators are even mixing.

In 1997, Le\'on and Montes \cite{Leon0} proved that every infinite-dimensional separable Banach space supports an operator with a hypercyclic subspace. In 2006, this result was generalized for infinite-dimensional separable Fréchet spaces with a continuous norm independently by Bernal \cite{Bernal2} and Petersson \cite{Petersson}. For Fréchet spaces isomorphic to $\omega$, the same result was obtained by B\`{e}s and Conejero \cite{Bes}. Theorem \ref{M0} permits us to answer positively the question asked in \cite[Problem 8]{Bes}: "Does every separable infinite-dimensional Fréchet space support an operator with a hypercyclic subspace?".

\begin{lemma}[\cite{Bonet2}]\label{lemexi}
Let $X$ be a separable infinite-dimensional Fréchet space.
If $X$ is not isomorphic to $\omega$ then 
there exists a dense subspace $F$ of $X$ which has a continuous norm $p$ and there are sequences $(x_n)_n\subset F$ and $(f_n)_n\subset X'$, the dual of $X$, such that
\begin{enumerate}
\item $(x_n)_n$ converges to $0$, and $\text{\emph{span}}\{x_n:n\in \N\}$ is dense in $X$;
\item $(f_n)_n$ is $X$-equicontinuous in $X'$;
\item $\langle x_n,f_m\rangle=0$ if $n\ne m$ and $(\langle x_n,f_n\rangle)_n\subset\ ]0,1[$.
\end{enumerate}
\end{lemma}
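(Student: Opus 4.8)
The statement is the lemma of Bonet and Peris \cite{Bonet2}, so the ``proof'' in the paper is presumably a citation; here is how I would actually establish it. The plan is to produce a continuous seminorm $p$ on $X$ whose kernel has infinite codimension, then to build by a single induction a two-sided biorthogonal system $(x_n)_n\subset X$, $(f_n)_n\subset X'$ with $|f_n|\le p$ and with the images $\pi_p(x_n)$ linearly independent in the local Banach space $X_p$ and $\operatorname{span}\{x_n\}$ dense in $X$; a final rescaling of the $x_n$ then yields $x_n\to0$ while preserving everything, and $F:=\operatorname{span}\{x_n\}$ is the required dense subspace on which $p$ is a norm. For the first step, fix an increasing fundamental sequence $(p_j)$ of seminorms. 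It is standard that a Fr\'echet space all of whose continuous seminorms have kernel of finite codimension is a reduced projective limit of finite-dimensional spaces, hence, being infinite-dimensional, isomorphic to $\omega$; since $X\not\cong\omega$ some $\ker p_j$ has infinite codimension, and passing to the cofinal sequence $(p_j,p_{j+1},\dots)$ we may assume $j=1$ and set $p:=p_1$. Let $\pi_p\colon X\to X_p$ be the canonical map into the completion $X_p$ of $X/\ker p$, so $\|\pi_p x\|=p(x)$ and $\pi_p(X)$ is dense in the separable infinite-dimensional Banach space $X_p$.

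For the induction, fix a dense sequence $(y_k)$ in $X$ and an enumeration $((k_n,j_n,\varepsilon_n))_{n\ge1}$ of $\N\times\N\times\{2^{-m}:m\ge1\}$. Assume $x_1,\dots,x_{n-1}$, $f_1,\dots,f_{n-1}$ are chosen with $f_i(x_\ell)=\delta_{i\ell}c_i$ ($c_i\in(0,1)$), $|f_i|\le p$, and $\pi_p(x_1),\dots,\pi_p(x_{n-1})$ linearly independent. Put $N_{n-1}:=\bigcap_{i<n}\ker f_i$, which is closed of finite codimension, and $v_n:=\sum_{i<n}\tfrac{f_i(y_{k_n})}{c_i}x_i$, so $y_{k_n}-v_n\in N_{n-1}$. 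Because $\ker p$ has infinite codimension and $N_{n-1}$ finite codimension, $\pi_p(N_{n-1})$ is infinite-dimensional, so we may choose $z\in N_{n-1}$ with $\pi_p(z)$ outside the finite-dimensional subspace $\operatorname{span}\{\pi_p(x_i):i<n\}+\K\,\pi_p(y_{k_n}-v_n)$ of $X_p$, and set $x_n:=(y_{k_n}-v_n)+\eta z$ with $\eta\neq0$ small enough that $p_{j_n}(\eta z)<\varepsilon_n$. Then $x_n\in N_{n-1}$ (so $f_i(x_n)=0$ for $i<n$), one has $p_{j_n}(y_{k_n}-(v_n+x_n))<\varepsilon_n$ with $v_n+x_n\in\operatorname{span}\{x_1,\dots,x_n\}$, and the choice of $z$ forces $\pi_p(x_n)\notin\operatorname{span}\{\pi_p(x_i):i<n\}$, in particular $p(x_n)>0$. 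Finally, $\pi_p(x_n)$ lies at distance $d_n>0$ from the finite-dimensional space $\operatorname{span}\{\pi_p(x_i):i<n\}$, so Hahn--Banach gives $\gamma\in X_p'$ with $\|\gamma\|\le1$, vanishing there, and $\gamma(\pi_p(x_n))=d_n$; setting $f_n:=\min(1,\tfrac1{2d_n})\,\gamma\circ\pi_p$ we get $|f_n|\le p$, $f_n(x_i)=0$ for $i<n$, and $c_n:=f_n(x_n)\in(0,1)$. Since every later $x_m$ lies in $N_{m-1}\subset\ker f_n$, in the end $f_m(x_n)=\delta_{mn}c_n$ for all $m,n$.

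Now conclude. By induction $(\pi_p(x_n))_n$ is linearly independent, so $p$ is a norm on $F:=\operatorname{span}\{x_n:n\ge1\}$; and by construction $\operatorname{dist}_{p_j}(y_k,\operatorname{span}\{x_n\})<\varepsilon$ for all $k,j$ and all $\varepsilon=2^{-m}$, so, $(y_k)$ being dense, $F$ is dense in $X$. To repair convergence, set $\widehat x_n:=t_nx_n$ with $t_n:=2^{-n}(1+\max_{j\le n}p_j(x_n))^{-1}>0$: this does not change $F$ (hence not its density, nor the fact that $p|_F$ is a norm) nor $(f_n)$ (which remains bounded by $p$, hence $X$-equicontinuous); it turns the biorthogonality into $f_m(\widehat x_n)=\delta_{mn}t_nc_n$ with $t_nc_n\in(0,1)$; and for fixed $j$ and $n\ge j$ one has $p_j(\widehat x_n)=t_np_j(x_n)\le2^{-n}$, so $\widehat x_n\to0$. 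Then $F$, $p$, $(\widehat x_n)_n$ and $(f_n)_n$ satisfy \emph{(1)}--\emph{(3)}.

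The main obstacle is precisely to run \emph{one} construction delivering all of: (i) a genuine two-sided biorthogonal system with $|f_n|\le p$ — which forces $x_n\in\bigcap_{i<n}\ker f_i$ and makes the infinite codimension of $\ker p$ indispensable, since at every step one must still have room to keep the images $\pi_p(x_n)$ linearly independent; (ii) density of $\operatorname{span}\{x_n\}$ in the full Fr\'echet topology, which pushes one to choose $x_n$ large enough to absorb pieces of the $y_k$; and (iii) $x_n\to0$, which wants $x_n$ small. The clash between (ii) and (iii) is what the final rescaling dissolves, the key point being that scaling the $x_n$ changes neither their linear span nor the functionals $f_n$.
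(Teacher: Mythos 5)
Your proof is correct. The paper itself gives no argument for this lemma — it is quoted verbatim from Bonet--Peris \cite{Bonet2} — and what you have written is essentially a faithful reconstruction of their proof: extract a continuous seminorm $p$ with $\ker p$ of infinite codimension from the hypothesis $X\not\cong\omega$, run a biorthogonal-system induction in which Hahn--Banach is applied in the local Banach space $X_p$ to produce functionals dominated by $p$, interleave the dense sequence $(y_k)$ to get density of the span, and rescale at the end to force $x_n\to 0$. All the individual steps check out (in particular the codimension count showing $\pi_p(N_{n-1})$ is infinite-dimensional, and the observation that rescaling affects neither the span nor the functionals), so there is nothing to correct.
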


\begin{theorem}
Every separable infinite-dimensional Fréchet space supports a mixing operator with a hypercyclic subspace.
\end{theorem}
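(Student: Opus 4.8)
We distinguish two cases. If $X$ is isomorphic to $\omega$, a single example suffices: the operator $2B$ — that is, $P(B)$ with $P(z)=2z$ and $B$ the unweighted backward shift — is mixing on $\omega$, since for non-empty basic open sets $U,V$ depending only on the first $m$ coordinates one has $(2B)^n(U)=\omega$ for all $n>m$, and it possesses a hypercyclic subspace by B\`es and Conejero \cite{Bes}. So assume henceforth that $X$ is not isomorphic to $\omega$.

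We first observe that every mixing operator $T$ on $X$ automatically satisfies condition $(C)$: being mixing, $T$ is weakly mixing, hence satisfies the Hypercyclicity Criterion for some increasing sequence $(n_k)$, i.e. there are dense subsets $X_0,Y_0$ of $X$ and maps $S_{n_k}\colon Y_0\to X$ with $T_{n_k}x\to 0$ for $x\in X_0$ and with $S_{n_k}y\to 0$, $T_{n_k}S_{n_k}y\to y$ for $y\in Y_0$. The first property is exactly (1) of Definition~\ref{def C fre}, and since for each $y\in Y_0$ the vectors $S_{n_k}y$ eventually lie in any prescribed zero‑neighbourhood $\{p<1\}$ while $T_{n_k}S_{n_k}y\to y$, the density of $Y_0$ gives (2). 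Consequently, to obtain a mixing operator with a hypercyclic subspace it is enough to build a mixing operator $T$ admitting, along its sequence $(n_k)$, a closed infinite‑dimensional subspace $M_0$ such that $M_0\cap\ker p_1$ has infinite codimension in $M_0$ and $T_{n_k}x\to 0$ for every $x\in M_0$; Theorem~\ref{M0} will then apply.

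Apply Lemma~\ref{lemexi} to obtain a dense subspace $F$ carrying a continuous norm $p$, a null sequence $(x_n)_n\subset F$ with $\text{span}\{x_n:n\ge 1\}$ dense in $X$, and an equicontinuous sequence $(f_n)_n\subset X'$ with $\langle x_n,f_m\rangle=0$ for $n\ne m$ and $\langle x_n,f_n\rangle\in\ ]0,1[$. Extend $p$ by density to a continuous seminorm $\tilde p$ on $X$; since $\tilde p$ restricts to a norm on the dense infinite‑dimensional subspace $F$, its kernel has infinite codimension, and after replacing $(p_n)$ by $(\max(p_n,\tilde p))_n$ we may assume $p_1=\tilde p$. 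Then $\text{span}\{x_n:n\ge 1\}\cap\ker p_1=\{0\}$, and for every infinite set $S\subset\N$ the closed subspace $N_S:=\overline{\text{span}}\{x_n:n\in S\}$ satisfies $E\cap N_S\not\subset\ker p_1$ for every closed subspace $E$ of finite codimension (a closed finite‑codimensional subspace of $N_S$ cannot meet the dense subspace $\text{span}\{x_n:n\in S\}$, on which $p_1$ is a norm, only at $0$); hence Corollary~\ref{corb} and Theorem~\ref{thm bas} are available inside any such $N_S$. Now one runs the construction used in the continuous‑norm case (cf. Bernal \cite{Bernal2}, Petersson \cite{Petersson}), with the ordinary basic sequences replaced by the basic sequences furnished by Theorem~\ref{thm bas} — which automatically satisfy $\overline{\text{span}}\{u_k\}\cap\ker p_1=\{0\}$ and are stable under small perturbations: one produces a mixing operator $T$ on $X$ of weighted‑shift type along $(x_n)$ (weights small on the block of indices supporting $M_0$, so that $T_{n_k}x\to 0$ there, and large elsewhere, so that $T$ is mixing along a chosen sequence $(n_k)$), together with a closed infinite‑dimensional $M_0$, spanned by a Theorem~\ref{thm bas}-basic sequence supported on a rapidly increasing set of indices, for which $M_0\cap\ker p_1=\{0\}$ and $T_{n_k}x\to 0$ for all $x\in M_0$. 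Theorem~\ref{M0} then yields a hypercyclic subspace for the mixing operator $T$.

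The step I expect to be most delicate is precisely this last construction: $M_0$ must be at once transversal to $\ker p_1$ (so that condition (1) of Theorem~\ref{M0} holds) and dynamically negligible (so that $T_{n_k}\to 0$ on it), which forces one to reconcile the expansion needed to make $T$ mixing with contraction on an infinite‑dimensional subspace, while simultaneously keeping the bookkeeping under control so that the relevant sequences are basic and stay off $\ker p_1$. This is exactly what Theorem~\ref{thm bas} is designed for — it supplies basic sequences living off $\ker p_1$ and robust enough to carry the dynamics — so that the classical Banach‑space argument survives the loss of a continuous norm on $X$.
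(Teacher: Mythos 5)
Your reduction is sound as far as it goes: the split into the $\omega$ case and the non-$\omega$ case, the observation that a mixing operator satisfies condition $(C)$, the use of Lemma~\ref{lemexi}, the extension of $p$ to a continuous seminorm on $X$, and the fact that $\overline{\text{span}}\{x_n:n\in S\}$ is transversal to the kernel of that seminorm all match the paper. The gap is that you never actually produce the operator $T$ and the subspace $M_0$; the entire content of the theorem in the non-$\omega$ case is deferred to "one runs the construction used in the continuous-norm case," which you yourself flag as the delicate step. The paper needs none of that machinery here: it simply takes the explicit Bonet--Peris/Grivaux mixing operator $Tx=x+\sum_{n=1}^{\infty}2^{-n}\langle x,f_{2n}\rangle x_n$ (see \cite{Bonet2,Grivaux}) and observes that, because only the \emph{even}-indexed functionals occur and $\langle x_n,f_m\rangle=0$ for $n\ne m$, every $x_n$ with $n$ odd is a \emph{fixed point} of $T$. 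Hence $M_0:=\overline{\text{span}}\{x_n:n\ \text{odd}\}$ is a closed infinite-dimensional subspace of fixed points with $\text{span}\{x_n:n\ \text{odd}\}\cap\ker q=\{0\}$, and Theorem~\ref{M0} applies via Remark~\ref{rem2}. No basic-sequence construction from Theorem~\ref{thm bas} and no tuning of weights is needed at this point.

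Beyond being incomplete, your sketch aims at the wrong target. You insist on $T^{n_k}x\to 0$ for all $x\in M_0$, but the only general way to manufacture a mixing operator on an arbitrary separable Fr\'echet space is as a perturbation of the identity, $T=I+K$ with $K$ a weighted shift along $(x_n)$; on any subspace where $K$ (and its iterates) vanish --- and these are precisely the natural candidates for $M_0$ --- the orbit of $x$ is constant equal to $x$, not null. Making the "weights small on the block of indices supporting $M_0$" only pushes $T$ closer to the identity there, so $T^{n_k}x\to x$, not $0$. This is exactly why the paper invokes Remark~\ref{rem2} (convergence of $(T^{n_k}x)_k$ suffices) rather than condition (2) of Theorem~\ref{M0} verbatim; your argument never appeals to that remark, so even granting the unstated construction, the verification of the hypotheses of Theorem~\ref{M0} as you formulate them would fail.
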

\begin{proof}
Let $B:\omega\rightarrow \omega$ be the backward shift. Since the operator $B$ is a mixing operator with a hypercyclic subspace in $\omega$ (see \cite{Bes}), any space $X$ isomorphic to $\omega$ supports a mixing operator with a hypercyclic subspace.

Now, if $X$ is not isomorphic to $\omega$, then  Lemma \ref{lemexi} permits us to construct a mixing operator $T:X\rightarrow X$ (see \cite{Bonet2,Grivaux}) defined by
\[Tx=x+\sum_{n=1}^{\infty}2^{-n}\langle x,f_{2n}\rangle x_n.\]
In particular, since $T$ is mixing, $T$ satisfies the Hypercyclicity Criterion and thus condition $(C)$. 

Let $F$ and $p$ as given by Lemma \ref{lemexi}. There exists a continuous seminorm $q$ on $X$ such that the restriction of $q$ to $F$ coincides with $p$. The subspace $M_0:=\overline{\text{span}}\{x_n:n \ \text{odd}\}$ then satisfies the assumptions of Theorem~\ref{M0} with the modification introduced in Remark~\ref{rem2}, since each vector in $M_0$ is a fixed point and 
\[\text{span}\{x_n:n \ \text{odd}\}\cap \ker q=\{0\}.\]
We conclude by Theorem~\ref{M0}.
\end{proof}

\section{Some criteria for hypercyclic subspaces of type 2}\label{type2}

We move from our study of hypercyclic subspaces of type $1$ to the study of hypercyclic subspaces of type $2$.
In order to construct hypercyclic subspaces of type $2$, we first have to find a way to construct basic sequences $(u_n)$ in $X$ such that if we denote by $M_u$ the closed linear span of $(u_n)$, then $M_u\cap \ker p$ is infinite-dimensional for any continuous seminorm $p$ on $X$.

\begin{lemma}\label{bas ker}
Let $X$ be a Fréchet space, $(p_n)$ an increasing sequence of seminorms inducing the topology of $X$ and $(u_n)_{n\ge 1}\subset X$. If for any $n\ge 1$, ${u_n\in \ker p_n\backslash \ker p_{n+1}}$, then $(u_n)$ is a basic sequence,
\[M_u:=\overline{\text{\emph{span}}}\{u_n:n\ge 1\}=\Big\{\sum_{n=1}^{\infty}\alpha_nu_n:(\alpha_n)\in \K^{\N}\Big\}\]
and for any $k\ge 1$,
\[M_u\cap \ker p_k=
\overline{\text{\emph{span}}}\{u_n:n\ge k\}=
\Big\{\sum_{n=k}^{\infty}\alpha_nu_n:(\alpha_n)\in \K^{\N}\Big\}.\]
In particular, the space $M_u\cap \ker p$ is infinite-dimensional for any continuous seminorm $p$ on $X$.
\end{lemma}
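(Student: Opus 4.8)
The plan is to exploit the hypothesis $u_n \in \ker p_n \setminus \ker p_{n+1}$ to show that the sequence $(u_n)$ satisfies the hypotheses of Theorem~\ref{thm bas} after a harmless rescaling, and then to identify the closed span and its intersections with the kernels explicitly. First I would replace each $u_n$ by $u_n/p_{n+1}(u_n)$; this is legitimate since $p_{n+1}(u_n)\neq 0$, it does not change any of the spaces $M_u$, $M_u\cap\ker p_k$, and it gives $p_{n+1}(u_n)=1$ for every $n$. (If one wants to invoke Theorem~\ref{thm bas} verbatim one should moreover note that the statement there is about $p_1(u_n)=1$; but what is really used in that proof is only a lower bound of the form $|a_n|\le C p_j(\sum a_k u_k)$ for the relevant indices, so the argument adapts; alternatively one re-runs the short basic-sequence computation directly in this setting.)

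The key point is that, because $p_j$ is increasing and $u_k\in\ker p_k$, one has $p_j(u_k)=0$ whenever $k\ge j$; hence for $j\le n$,
\[
p_j\Big(\sum_{k=1}^{n+1}a_ku_k\Big)=p_j\Big(\sum_{k=1}^{j-1}a_ku_k\Big)=p_j\Big(\sum_{k=1}^{n}a_ku_k\Big),
\]
so the estimate \eqref{lem 1} holds trivially with $\varepsilon_n=0$, and $K=1$. Thus Theorem~\ref{thm bas} applies and tells us $(u_n)$ is basic; moreover from the proof (estimate \eqref{eq4}) we get for any $x=\sum a_ku_k\in M_u$ that $|a_n|=p_{n+1}(a_nu_n)\le 2p_{n+1}(x)$, after the obvious reindexing of \eqref{eq4} to the seminorm $p_{n+1}$ that controls $u_n$. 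This shows that a vector of $M_u$ determines its coefficients, so the representation $x=\sum_{n\ge1}\alpha_n u_n$ is unique and $M_u=\{\sum_{n\ge1}\alpha_nu_n:(\alpha_n)\in\K^{\N}\}$ provided every such series converges — and it does, since for each fixed $j$ only the finitely many terms with $n\le j-1$ contribute to $p_j$, so the partial sums are $p_j$-eventually constant, hence Cauchy in $X$.

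For the description of $M_u\cap\ker p_k$: the inclusion $\overline{\text{span}}\{u_n:n\ge k+1\}\subset M_u\cap\ker p_k$ is immediate because each $u_n$ with $n\ge k+1$ lies in $\ker p_{n}\subset\ker p_{k+1}\subset\ker p_k$ (again using that $p_j$ is increasing) and $\ker p_k$ is closed. Conversely, if $x=\sum_{n\ge1}\alpha_nu_n\in\ker p_k$, then $0=p_k(x)=p_k(\sum_{n=1}^{k-1}\alpha_nu_n)$; applying the coefficient bound $|\alpha_n|\le 2p_{n+1}(x)$ together with the fact that $p_k$ majorizes all $p_{n+1}$ for $n+1\le k$ is not quite enough by itself, so instead I would argue directly: for $n\le k-1$ we have $u_n\in\ker p_n\setminus\ker p_{n+1}$ with $n+1\le k$, and one peels coefficients off one at a time — since $p_n(u_i)=0$ for $i\ge n$, evaluating $p_{n+1}$ on $x$ and using \eqref{eq4}-type estimates forces $\alpha_1=\dots=\alpha_{k-1}=0$ by downward induction on $n$ from $n=k-1$ (at each stage $p_{n+1}(\sum_{i=1}^{n}\alpha_iu_i)$ controls $|\alpha_n|$). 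Hence $x=\sum_{n\ge k+1}\alpha_nu_n$, giving the reverse inclusion and the two displayed identities. The final assertion is then automatic: given any continuous seminorm $p$, there is $k$ with $p\le Cp_k$, so $\ker p_k\subset\ker p$, and $M_u\cap\ker p\supset M_u\cap\ker p_k=\overline{\text{span}}\{u_n:n\ge k+1\}$, which is infinite-dimensional since the $u_n$ ($n\ge k+1$) are linearly independent (each $u_n\notin\ker p_{n+1}$ while $u_i\in\ker p_{n+1}$ for $i>n$).

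The only genuinely delicate point is the downward induction extracting the coefficients $\alpha_1,\dots,\alpha_{k-1}$ from the hypothesis $p_k(x)=0$; everything else is bookkeeping with the increasing seminorms and the vanishing pattern $p_j(u_k)=0$ for $k\ge j$. I expect that step, together with checking that the coefficient estimate from the proof of Theorem~\ref{thm bas} really does localize to the seminorm $p_{n+1}$ rather than $p_1$, to be where the proof needs to be written carefully rather than merely asserted.
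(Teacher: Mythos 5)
There is a genuine gap, and it sits exactly where you predicted it would: the reduction to Theorem~\ref{thm bas} and the coefficient estimate. Theorem~\ref{thm bas} cannot be made to apply here, even after rescaling: its hypothesis is $p_1(u_n)=1$, whereas in the present situation every $u_n$ lies in $\ker p_n\subset\ker p_1$, and indeed its conclusion $M_u\cap\ker p_1=\{0\}$ is the exact opposite of what Lemma~\ref{bas ker} asserts. Your proposed substitute, the ``reindexed'' estimate $|a_n|\le 2p_{n+1}\big(\sum_k a_ku_k\big)$, is false. Take $X=\omega$ with $p_j(x)=\max_{i<j}|x_i|$ and $u_n=e_n+c_ne_{n+1}$ (so $p_{n+1}(u_n)=1$): for $x=\varepsilon u_{n-1}-\varepsilon c_{n-1}u_n$ one has $p_{n+1}(x)=\varepsilon$ while $|a_n|=\varepsilon|c_{n-1}|$ is arbitrarily large. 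The obstruction is structural: $|a_n|\le p_{n+1}(\sum_{k\le n}a_ku_k)+p_{n+1}(\sum_{k\le n-1}a_ku_k)$, and the second term is not controlled by $p_{n+1}(x)$ because the instance of \eqref{lem 1} you would need has $j=n+1>n-1$. The same false estimate underlies your ``downward induction'' for showing $p_k(x)=0$ forces the low coefficients to vanish; the induction must run \emph{upward} ($p_2(x)=|\alpha_1|p_2(u_1)$ gives $\alpha_1=0$, then $p_3(x)=|\alpha_2|p_3(u_2)$ gives $\alpha_2=0$, and so on), which is how the paper argues uniqueness.

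The second, larger omission is the identity $M_u=\{\sum_n\alpha_nu_n:(\alpha_n)\in\K^{\N}\}$. You establish that every such series converges and that its coefficients are unique, and then assert the identity; but uniqueness plus convergence only gives the inclusion $\{\sum_n\alpha_nu_n\}\subset M_u$. The reverse inclusion requires showing that the set of convergent series is \emph{closed}, i.e.\ that if finite combinations $x_m=\sum_n\alpha_{n,m}u_n$ converge to $x$, then each $(\alpha_{n,m})_m$ converges to some $\alpha_n$ and $x=\sum_n\alpha_nu_n$. This is the bulk of the paper's proof: one shows inductively that $(\alpha_{1,m})_m$ is Cauchy because $p_2(x_m-x_{m'})=|\alpha_{1,m}-\alpha_{1,m'}|p_2(u_1)$, then that $(\alpha_{2,m})_m$ is Cauchy using $p_3$ and the already-established convergence of $(\alpha_{1,m})_m$, and so on, and finally that $p_N(x-\sum_n\alpha_nu_n)\le p_N(x-x_m)+p_N(\sum_{n<N}(\alpha_{n,m}-\alpha_n)u_n)\to 0$. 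None of this appears in your proposal, and it is not recoverable from the (false) coefficient bound. The correct strategy is the direct one the paper uses --- your hedged ``alternatively one re-runs the computation directly'' is in fact the only viable route, and it is precisely the part you did not write down.
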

\begin{proof}
Let $(u_n)$ be a sequence in $X$ such that for any $n\ge 1$, $u_n\in \ker p_n\backslash \ker p_{n+1}$. For any sequence $(\alpha_n)\in \K^{\N}$, the series $\sum_{n=1}^{\infty} \alpha_n u_n$ converges because for any $k\ge 1$, any $M\ge N\ge k$, we have $p_k\big(\sum_{n=N}^{M} \alpha_n u_n\big)=0$. Moreover, if $x=\sum_{n=1}^{\infty} \alpha_n u_n$ and $x=\sum_{n=1}^{\infty} \beta_n u_n$, then for any $n\ge 1$, $\alpha_n=\beta_n$. Indeed, suppose that $n_0$ is the smallest index such that $\alpha_{n_0}\ne\beta_{n_0}$, then we get the following contradiction:
\[0=p_{n_0+1}\Big(\sum_{n=1}^{\infty} \alpha_n u_n-\sum_{n=1}^{\infty} \beta_n u_n\Big)=|\alpha_{n_0}-\beta_{n_0}|p_{n_0+1}(u_{n_0})\ne 0.\]
Let $x\in X$ and $x_k=\sum_{n=1}^{\infty} \alpha_{n,k} u_n$. To finish the proof, it suffices to show that if the sequence $(x_k)_k$ converges to $x$ in $X$, then each sequence $(\alpha_{n,k})_k$ converges to some scalar $\alpha_n$ and $x=\sum_{n=1}^{\infty} \alpha_n u_n$.
We first notice that since $p_2(x_k-x_j)=|\alpha_{1,k}-\alpha_{1,j}|p_2(u_1)$ converges to $0$ as $k,j\rightarrow \infty$, the sequence $(\alpha_{1,k})_k$ is a Cauchy sequence and thus converges to some scalar $\alpha_1$.
Then we remark by induction that for any $N\ge 2$, 
the sequence $(\alpha_{N,k})_k$ is a Cauchy sequence and thus converges to some scalar $\alpha_{N}$. Therefore, we have, for any $N\ge 2$,
\begin{align*}
p_N\Big(x-\sum_{n=1}^{\infty} \alpha_n u_n\Big)&\le p_N(x-x_k)+p_N\Big(x_k-\sum_{n=1}^{\infty} \alpha_n u_n\Big)\\
&= p_N(x-x_k)+p_N\Big(\sum_{n=1}^{N-1} \alpha_{n,k} u_n-\sum_{n=1}^{N-1} \alpha_n u_n\Big)\xrightarrow{k\rightarrow \infty} 0.
\end{align*}
\end{proof}
\begin{remark}
Such a basic sequence does not remain basic under small perturbations; for example in $\omega$, $(e_k)_{k \ge 0}$ is a basic sequence but $(e_k+\varepsilon_ke_{k-1})_{k\ge 0}$, with $e_{-1}=0$, is not basic for any $\varepsilon_k>0$.
\end{remark}

Let $(T_k)$ be a sequence of linear operators from $X$ to $Y$, where $X$ is an infinite-dimensional Fréchet space without continuous norm, whose topology is given by an increasing sequence of seminorms $(p_n)$, and where $Y$ is a separable topological vector space, whose topology is given by an increasing sequence of seminorms $(q_j)$.
Using the previous construction of basic sequences, we can state a sufficient criterion for the existence of hypercyclic subspaces of type $2$.

\begin{theorem}\label{thm X0}
If there exist an increasing sequence $(n_k)$ and a set $X_0\subset X$ such that 
\begin{enumerate}[\upshape 1.]
\item for any $j\ge 1$, for any $x\in X_0$, $(q_j(T_{n_k}x))_k$ is ultimately zero;
\item for any $n,K\ge 1$, $\bigcup_{k\ge K} T_{n_k}(X_0\cap\ker p_n)$ is dense in $Y$,
\end{enumerate}
then $(T_n)$ possesses a hypercyclic subspace of type $2$.
\end{theorem}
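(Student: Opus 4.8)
The plan is to build, by a careful bookkeeping argument, a basic sequence $(u_m)_{m\ge 1}$ of the special form guaranteed by Lemma~\ref{bas ker} --- i.e.\ with $u_m\in\ker p_m\setminus\ker p_{m+1}$ --- such that every nonzero vector in $M_u:=\overline{\text{span}}\{u_m\}$ is hypercyclic for $(T_n)$. Since each such $u_m$ lies in $\ker p_m$, Lemma~\ref{bas ker} will immediately give that $M_u\cap\ker p$ is infinite-dimensional for every continuous seminorm $p$, so $M_u$ is automatically of type $2$ once we know it is a hypercyclic subspace.

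First I would fix a countable dense sequence $(y_\ell)_{\ell\ge 1}$ in $Y$ and enumerate the ``tasks'' to be satisfied: for each $\ell$ and each target accuracy we must eventually approximate $y_\ell$ by $T_{n_k}$ applied to a suitable vector. The vectors $u_m$ will be chosen inductively. Having chosen $u_1,\dots,u_{m-1}$, I pick $u_m$ as follows. By hypothesis~2, $\bigcup_{k\ge K}T_{n_k}(X_0\cap\ker p_m)$ is dense in $Y$ for every $K$; so I can choose an index $k_m$ (larger than all previously used indices) and a vector $v_m\in X_0\cap\ker p_m$ with $T_{n_{k_m}}v_m$ close to the currently scheduled target $y_{\ell(m)}$. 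Because $v_m\in X_0$, hypothesis~1 guarantees that $q_j(T_{n_k}v_m)=0$ for all $k$ large enough (depending on $j$); this is what makes the infinite linear combinations behave, since when we evaluate $T_{n_{k_m}}$ on the tail $\sum_{i>m}a_iu_i$ all but finitely many terms will contribute nothing. To place $u_m$ in $\ker p_m\setminus\ker p_{m+1}$ I would add to $v_m$ a small correction: $X$ has no continuous norm, so $\ker p_m\ne\{0\}$; moreover since the seminorms are strictly increasing in the relevant sense one can find $w_m\in\ker p_m$ with $p_{m+1}(w_m)\ne 0$, and set $u_m:=v_m+\lambda_m w_m$ with $\lambda_m>0$ small. (If $v_m$ already satisfies $p_{m+1}(v_m)\ne0$ no correction is needed; in general one shrinks $\lambda_m$ enough that $T_{n_{k_m}}u_m$ is still close to $y_{\ell(m)}$ and that the approximation quality degrades by a controlled amount.) One must also keep $u_m\in\ker p_m$, which holds since both $v_m$ and $w_m$ lie there.

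With $(u_m)$ so constructed, Lemma~\ref{bas ker} tells us $(u_m)$ is basic and $M_u=\{\sum_m a_mu_m:(a_m)\in\K^{\N}\}$. Given a nonzero $x=\sum_m a_mu_m\in M_u$, let $m_0$ be the least index with $a_{m_0}\ne0$. For the subsequence $(n_{k_m})_{m\ge m_0}$ one computes $T_{n_{k_m}}x=\sum_{i\ge m_0}a_iT_{n_{k_m}}u_i$; using hypothesis~1 and the fact that the indices $k_i$ were chosen increasing, for fixed $j$ the terms with $i$ small are controlled (their $q_j$-value is $0$ once $k_m$ is large), while the leading term $a_mT_{n_{k_m}}u_m$ is close to $a_m y_{\ell(m)}$ and one arranges the scheduling so that $a_m^{-1}$-rescaled targets $\{y_\ell\}$ are hit densely. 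The remaining tail $\sum_{i>m}a_iT_{n_{k_m}}u_i$ must be made small: here one uses, exactly as in the stable-perturbation arguments, a rapidly-decreasing choice of the correction parameters and of the approximation errors so that $\sum_{i>m}|a_i|q_j(T_{n_{k_m}}u_i)$ is summably small --- but since $u_i\in\ker p_i$ and (by hypothesis~1 applied to the $v_i$, plus the smallness of $\lambda_i w_i$ under $T_{n_{k_m}}$) these contributions vanish or are tiny for $i$ in a cofinite set, this is manageable. Concluding, the orbit of $x$ under $(T_{n_{k_m}})$ is dense, hence $x$ is hypercyclic, and $M_u$ is a hypercyclic subspace of type $2$.

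The main obstacle is the simultaneous control of the tail $\sum_{i>m}a_iT_{n_{k_m}}u_i$ for \emph{all} choices of coefficients $(a_i)$ at once: unlike the type~$1$ situation of Theorem~\ref{thm bas}, the basic sequence from Lemma~\ref{bas ker} is not stable under perturbation, so we cannot simply perturb a pre-existing basis and must instead thread the ``density'' choices of hypothesis~2 and the ``ultimate vanishing'' of hypothesis~1 together in a single induction, carefully ordering the indices $k_m$ and the error budgets so that condition~1 kills the early terms of every tail while condition~2 still lets us approximate every $y_\ell$ infinitely often with vectors from $X_0\cap\ker p_n$ for every $n$. Getting the scheduling function $m\mapsto\ell(m)$ right --- so that for every $x$, every leading index $m_0$, and every $\ell$ the rescaled target $a_{m_0}y_\ell$ is approached --- is the delicate combinatorial heart of the argument.
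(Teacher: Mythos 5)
Your overall frame (build $u_m\in\ker p_m\setminus\ker p_{m+1}$, invoke Lemma~\ref{bas ker}, conclude type~$2$ for free) is the right one, but the construction you sketch has a decisive gap: you assign to each basis vector $u_m$ a \emph{single} approximation task, namely hitting one target $y_{\ell(m)}$ at the one time $n_{k_m}$. Now test your subspace on the vector $x=u_{m_0}$ itself (all coefficients zero except $a_{m_0}$). At every time $n_{k_m}$ with $m>m_0$ your own design forces $q_j(T_{n_{k_m}}u_{m_0})$ to vanish eventually (that is exactly how you kill the ``early terms of the tails''), and at times $n_{k_m}$ with $m<m_0$ the vector $u_{m_0}$ was chosen deep enough in the kernels to contribute nothing; so the orbit of $u_{m_0}$ along $(n_{k_m})$ visits a neighbourhood of $a_{m_0}y_{\ell(m_0)}$ once and otherwise tends to $0$. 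No scheduling function $m\mapsto\ell(m)$ can repair this: each generator must itself carry \emph{infinitely many} approximation tasks, one for every target $y_l$. The paper does this by taking $u_k=\sum_{l=1}^\infty x_{k,l}$ with a doubly indexed family $x_{k,l}\in X_0\cap\ker p_{k+l}$ and a time $\phi(k,l)$ at which $T_{n_{\phi(k,l)}}x_{k,l}$ approximates $y_l$, the indices being interleaved (via the order $(k',l')\prec(k,l)$ and the cutoffs $K_j^{k,l}$) so that at the time $n_{\phi(k_0,l_0)}$ every \emph{other} block satisfies $q_{l_0}(T_{n_{\phi(k_0,l_0)}}x_{k,l})=0$ exactly.

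Two further points in your sketch would also fail as written. First, the fallback of making the tail $\sum_{i>m}|a_i|\,q_j(T_{n_{k_m}}u_i)$ ``summably small'' cannot work: the coefficients $(a_i)$ range over all of $\K^{\N}$ with no growth bound (Lemma~\ref{bas ker} converges for arbitrary scalars), so only \emph{exact} vanishing of the cross-terms survives multiplication by arbitrary $a_i$; this is why the paper's conditions (d) and (e) demand $q_j(T_{n_i}x_{k,l})=0$ rather than a smallness estimate. Second, your correction $u_m=v_m+\lambda_m w_m$ with $w_m\in\ker p_m$ generic destroys hypothesis~1: $w_m$ need not lie in $X_0$, and shrinking $\lambda_m$ controls $p$-seminorms of $w_m$ but says nothing about $q_j(T_{n_k}(\lambda_m w_m))$ for large $k$, so $(q_j(T_{n_k}u_m))_k$ need no longer be ultimately zero. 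The paper sidesteps this entirely: since the targets satisfy $q_l(y_l)\ne 0$, the approximating vectors in $X_0\cap\ker p_n$ are automatically nonzero, hence satisfy $p_m(x)\ne 0$ for some $m>n$, and one simply renumbers the seminorms (``changing $p_{k+2}$ if necessary'') instead of perturbing the vectors.
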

\begin{proof}
If for any continuous seminorm $q$ on $Y$, for any $y\in Y$, $q(y)=0$, then 
$X$ is a hypercyclic subspace of type $2$ because, by our assumption, $X$ does not admit a continuous norm.
If this is not the case, there exists a dense sequence $(y_l)_{l\ge 1}$ in $Y$ such that for any $l\ge 1$, $q_j(y_l)\ne 0$ for some $j\ge 1$. Without loss of generality, we can suppose that $q_l(y_l)\ne 0$. By continuity of $T_{n_k}$, we also notice that for any $K\ge 1$, there exists $N_K\ge 1$ and $C>0$ such that for any $x\in X$,
\[\max_{k\le K}q_{k}(T_{n_k}x)\le C p_{N_K}(x).\]

In order to use Lemma \ref{bas ker}, we then seek to construct a sequence $(u_k)\subset X$ such that for any $k\ge 1$, $u_k\in \ker p_k\backslash \ker p_{k+1}$ and for any $(\alpha_k)_k\subset \K^{\N}\backslash\{0\}$, the series $\sum_{k=1}^{\infty}\alpha_k u_k$ is hypercyclic for $(T_{n_k})_k$. By hypothesis, for any $\varepsilon>0$, for any $l,K,n\ge 1$, there exist $x\in X_0\cap \ker p_n$ and $k\ge K$ such that 
\[q_l(T_{n_k}x-y_l)<\varepsilon\]
and since $q_l(y_l)\ne 0$, we can assume that $x\ne 0$ so that there exists $m>n$ such that $p_m(x)\ne 0$.
For any $l,n,K_0\ge 1$, there then exist $x\in X_0\cap\ker p_n$ and an increasing sequence $(K_j)_{j\ge 1}$ such that
\begin{itemize}
\item for any $i\le K_0$,\quad $q_{i}(T_{n_i}x)=0$ (choosing $x\in \ker p_{N_{K_0}}$);
\item there exists $K_0\le i<K_{1}$, such that $q_l(T_{n_i}x-y_l)< \frac{1}{l}$;
\item for any $j\ge 1$, any $i\ge K_{j}$, $q_j(T_{n_i}x)=0$ (because $x\in X_0$).
\end{itemize}

We construct recursively a family $(x_{k,l})_{k,l\ge 1}\subset X_0$ and a family $(n_{\phi(k,l)})_{k,l\ge 1}$, for the strict order $\prec$ defined by $(k',l')\prec(k,l)$ if $k'+l'<k+l$ or if $k'+l'=k+l$ and $l'<l$, such that
\begin{itemize}
\item for any $k\ge 1$, $(\phi(k,l))_l$ is strictly increasing;
\item for any $k,l\ge 1$, $x_{k,l}\in \ker p_{k+l}$ and
$q_{l}(T_{n_{\phi(k,l)}}x_{k,l}-y_l)<\frac{1}{l}$;
\item for any $k,l\ge 1$, there exists an increasing sequence $(K_{j}^{k,l})_{j\ge 0}$ such that
\begin{enumerate}[\upshape (a)]
\item $K_0^{k,l}\ge l$;
\item $\phi(k,l)\in [K_0^{k,l},K_{1}^{k,l}[$;
\item for each pair $(k',l')\prec (k,l)$, $K_{l}^{k',l'}<K_0^{k,l}$;
\item for any $i\le K_0^{k,l}$, $q_{i}(T_{n_i}x_{k,l})=0$;
\item for any $j\ge 1$, any $i\ge K_{j}^{k,l}$, $q_j(T_{n_i}x_{k,l})=0$.
\end{enumerate}
\end{itemize}
Since we use the seminorm $p_{k+2}$ only after the construction of $x_{k,1}$, we may assume that $p_{k+2}(x_{k,1})\ne 0$, by changing the seminorm $p_{k+2}$ if necessary.

Let $u_k=\sum_{l=1}^{\infty} x_{k,l}$ which clearly converges. We therefore remark that we have 
$p_{k+1}(u_k)=0$ and $p_{k+2}(u_k)\ne 0$, and thus by Lemma \ref{bas ker} \[M:=\overline{\text{span}}\{u_k:k\ge 1\}=\Big\{\sum_{k=1}^{\infty}\alpha_ku_k:(\alpha_k)\in\mathbb{K}^{\mathbb{N}}\Big\}.\] 
Moreover, by our construction, we notice that for any $l_0\ge1$:
\begin{itemize}
\item if $(k,l)\prec (k_0,l_0)$ then, by (b) and (c), we have $K_{l_0}^{k,l}<K_0^{k_0,l_0}\le \phi(k_0,l_0)$ and thus, by (e), $q_{l_0}(T_{n_{\phi(k_0,l_0)}}x_{k,l})=0$;
\item if $(k_0,l_0)\prec (k,l)$ then, by (a), (b) and (c), we have \[l_0\le K_0^{k_0,l_0}\le \phi(k_0,l_0)< K_l^{k_0,l_0}< K_0^{k,l}\] and thus, by (d), $q_{l_0}(T_{n_{\phi(k_0,l_0)}}x_{k,l})\le q_{\phi(k_0,l_0)}(T_{n_{\phi(k_0,l_0)}}x_{k,l})=0$.
\end{itemize}
Therefore, if $u=\sum_{k=1}^{\infty}\alpha_k u_k$ with $\alpha_{k_0}=1$, we have for any $l_0\ge 1$:
\[q_{l_0}(T_{n_{\phi(k_0,l_0)}}u-y_{l_0})=q_{l_0}(T_{n_{\phi(k_0,l_0)}}x_{k_0,l_0}-y_{l_0})<\frac{1}{l_0}.\]
We conclude that each non-zero vector in $M$ is hypercyclic and thus that $M$ is a hypercyclic subspace of type $2$.
\end{proof}
\begin{example}
Every non-constant translation operator on $C^{\infty}(\R)$ possesses a hypercyclic subspace of type $2$. This result directly follows from Theorem~\ref{thm X0} by considering the set of infinitely differentiable functions with finite support.
\end{example}

We can simplify the previous criterion if each continuous seminorm on $X$ has a kernel of finite codimension.

\begin{cor}\label{cor X0}
Suppose that each seminorm $p_n$ has a kernel of finite codimension.
If there exist an increasing sequence $(n_k)$ and a set $X_0\subset X$ such that 
\begin{enumerate}[\upshape 1.]
\item for any $j\ge 1$, any $x\in X_0$, $(q_j(T_{n_k}x))_k$ is ultimately zero;
\item for any $K\ge 1$, $\bigcup_{k\ge K} T_{n_k}(X_0)$ is dense in $Y$.
\end{enumerate}
then $(T_n)$ possesses a hypercyclic subspace.
\end{cor}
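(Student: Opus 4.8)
The plan is to deduce this from Theorem~\ref{thm X0}. First I would replace $X_0$ by its linear span: hypothesis~$1$ passes to finite linear combinations (take the largest of the finitely many thresholds), and the density in hypothesis~$2$ of the corollary is only reinforced. So I may assume that $X_0$ is a subspace of $X$, and then hypothesis~$1$ holds on all of $X_0$; in particular $T_{n_k}x\to0$ for every $x\in X_0$.

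It then remains to check hypothesis~$2$ of Theorem~\ref{thm X0}, namely that $\bigcup_{k\ge K}T_{n_k}(X_0\cap\ker p_n)$ is dense in $Y$ for all $n,K\ge1$ (hypothesis~$1$ of that theorem is literally hypothesis~$1$ of the corollary). Fix $n$. Since $\ker p_n$ has finite codimension in $X$, the subspace $X_0\cap\ker p_n$ has finite codimension in $X_0$, so I can write $X_0=(X_0\cap\ker p_n)\oplus G$ with $G=\text{span}\{g_1,\dots,g_d\}$ finite-dimensional. Now fix $K\ge1$, $y\in Y$, $j\ge1$ and $\varepsilon>0$. Because each $g_i$ lies in $X_0$, hypothesis~$1$ gives $N_j\ge1$ such that $q_j(T_{n_k}g_i)=0$ for all $i\le d$ and all $k\ge N_j$, hence $q_j(T_{n_k}g)=0$ for every $g\in G$ and every $k\ge N_j$. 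Applying hypothesis~$2$ of the corollary with $\max(K,N_j)$ in place of $K$, I get some $k\ge\max(K,N_j)$ and some $x\in X_0$ with $q_j(T_{n_k}x-y)<\varepsilon$; writing $x=x'+g$ along the above decomposition, with $x'\in X_0\cap\ker p_n$, I obtain $q_j(T_{n_k}x'-y)\le q_j(T_{n_k}x-y)+q_j(T_{n_k}g)<\varepsilon$. Since the sets $\{z\in Y:q_j(z-y)<\varepsilon\}$ ($j\ge1$, $\varepsilon>0$) form a base of neighbourhoods of an arbitrary $y\in Y$, this is exactly the density needed, and Theorem~\ref{thm X0} then produces a hypercyclic subspace of type~$2$, which is in particular a hypercyclic subspace.

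I do not expect a genuine obstacle. The only point worth emphasising is that the reduction to a subspace has to be made \emph{before} intersecting with $\ker p_n$: this is what upgrades hypothesis~$1$ from "$q_j(T_{n_k}x)\to0$" to "$q_j(T_{n_k}x)=0$ eventually" on the finite-dimensional complement $G$, so that discarding the $G$-component of an approximating vector costs nothing in the seminorm $q_j$, and the density hypothesis on $X_0$ transfers to $X_0\cap\ker p_n$.
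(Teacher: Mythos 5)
Your proof is correct, and it reaches the conclusion by a route that differs in its mechanics from the paper's, although both arguments rest on the same two pillars: the finite codimension of $\ker p_n$ and the fact that hypothesis~1 forces $q_j(T_{n_k}x)$ to vanish \emph{exactly} (not merely tend to $0$) for large $k$. You first pass to the linear span of $X_0$ (legitimate, since condition~1 of Theorem~\ref{thm X0} is stable under finite linear combinations and density only improves), then split off a finite-dimensional algebraic complement $G$ of $X_0\cap\ker p_n$ in $X_0$, and use the eventual vanishing of $q_j\circ T_{n_k}$ on the finitely many generators of $G$ to discard the $G$-component of an approximating vector at no cost in $q_j$ once $k$ is large enough. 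The paper never enlarges $X_0$: for a basic open set $U=\{y:q_{j_0}(y_0-y)<\varepsilon\}$ it iteratively produces $d+1$ vectors $x_1,\dots,x_{d+1}\in X_0$ and times $k_1\le\cdots\le k_{d+1}$ with $T_{n_{k_i}}x_i\in U$ and the triangular vanishing $q_{j_0}(T_{n_{k_j}}x_i)=0$ for $i<j$ (each new time is taken beyond the threshold after which the previous vectors have died in $q_{j_0}$); this triangularity yields linear independence, a dimension count places a nontrivial combination in $\ker p_n$, and normalising the last nonzero coefficient shows its image at the corresponding time still lies in $U$. The content is the same, but your direct-sum decomposition replaces the explicit $(d+1)$-fold construction and is a little shorter; the paper's version avoids ever modifying the given set $X_0$ until the final linear combination. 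Your reduction of density in $Y$ to approximation in a single seminorm $q_j$ is valid since the $(q_j)$ are increasing, and the degenerate case where all $q_j$ vanish (treated separately in the paper) is covered implicitly by your argument.
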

\begin{proof}
Thanks to Theorem \ref{thm X0}, it is sufficient to prove that for any $n,K\ge 1$, for any non-empty open set $U\subset Y$, there exists $x\in\ker p_n$ such that
\begin{enumerate}
\item for any $j\ge 1$, $(q_j(T_{n_k}x))_k$ is ultimately zero;
\item for some $k\ge K$, $T_{n_k}x\in U$.
\end{enumerate}
Let $n,K\ge 1$ and $U$ a non-empty open set in $Y$. We denote by $d$ the codimension of $\ker p_n$. If for any continuous seminorm $q$ on $Y$, for any $y\in Y$, $q(y)=0$, then $X$ is a hypercyclic subspace of type $2$. If this is not the case, each non-empty open set in $Y$ contains a vector $y$ satisfying $q(y)\ne 0$ for some continuous seminorm $q$ on $Y$ and there thus exist $y_0\in Y$, $j_0\ge 1$ and $\varepsilon>0$ such that 
\[\{y\in Y:q_{j_0}(y_0-y)<\varepsilon\} \subset U \quad\text{and}\quad \varepsilon<q_{j_0}(y_0).\]
By hypothesis, there exists $x_1\in X_0$ such that for some $k_1\ge K$, \[T_{n_{k_1}}x_1\in \{y\in Y:q_{j_0}(y_0-y)<\varepsilon\}.\] 
Since $x_1\in X_0$, there also exists $K_1\ge k_1$ such that for any $k\ge K_1$, $q_{j_0}(T_{n_k}x_1)=0$. In particular, for any $k\ge K_1$, $T_{n_k}x_1\notin \{y\in Y:q_{j_0}(y_0-y)<\varepsilon\}$. Therefore, since $\bigcup_{k\ge K_1} T_{n_k}(X_0)$ is dense in $X$, there exists another vector $x_2\in X_0$ such that for some $k_2\ge K_1$, \[T_{n_{k_2}}x_2\in \{y\in Y:q_{j_0}(y_0-y)<\varepsilon\}.\]
We can thus find $x_1,\dots,x_{d+1}\in X_0$ and $k_1\le\cdots\le k_{d+1}$ such that for any $1\le i\le d+1$, $T_{n_{k_i}} x_i\in U$ and for any $1\le i< j\le d+1$, $q_{j_0}(T_{n_{k_j}} x_i)=0$. In particular, $x_1,\dots, x_{d+1}$ are linearly independent because for any $1\le j\le d+1$, any $a_1,\dots,a_{j}\in \K$ with $a_j\ne0$,
\[q_{j_0}(T_{n_{k_{j}}}(a_1x_1+\cdots+a_jx_j))=|a_j|q_{j_0}(T_{n_{k_{j}}}x_j)\ne 0.\]
Since $\ker p_n$ has codimension $d$, we deduce that there exist $a_1,\dots,a_{d+1}$ such that $x:=a_1x_1+\cdots+a_{d+1}x_{d+1}\in \ker p_n\backslash\{0\}$.
Therefore, since $x_1,\dots,x_{d+1}\in X_0$, $(q_j(T_{n_k}x))_k$ is ultimately zero  for any $j\ge 1$ and if we let $i_0=\max\{1\le i\le d+1:a_i\ne 0\}$, without loss of generality $a_{i_0}=1$ and $T_{n_{k_{i_0}}}x\in U$. The result follows.
\end{proof}

\begin{samepage}
Now we establish a sufficient criterion for not having a hypercyclic subspace of type~$2$.

\begin{theorem}\label{thm E2}
Let $X_{j}=\{x\in X: \#\{k:q_j(T_{k}x)=0\}=\infty\}$.
If there exist $j,N,K\ge 1$ and a subspace $E$ of finite codimension in $X$ such that 
\[\bigcup_{k\ge K}T_k\big(\ker p_N\cap E \cap X_j\big) \text{\quad is not dense in $Y$,}\]
then $(T_k)$ does not possess any hypercyclic subspace of type $2$.
\end{theorem}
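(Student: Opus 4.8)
The plan is to argue by contradiction: suppose $(T_k)$ has a hypercyclic subspace $M$ of type~$2$, i.e.\ a closed infinite‑dimensional subspace all of whose non‑zero vectors are hypercyclic and with $M\cap\ker p$ infinite‑dimensional for every continuous seminorm $p$. First I would make two harmless reductions. Since $E$ has finite codimension, $M\cap E$ is again closed and infinite‑dimensional, all of its non‑zero vectors are hypercyclic, and it is again of type~$2$ (for any continuous seminorm $p$, $(M\cap E)\cap\ker p$ has finite codimension in the infinite‑dimensional space $M\cap\ker p$); replacing $M$ by $M\cap E$ I may assume $E=X$. Moreover, if $q_j\equiv 0$ on $Y$ then $X_j=X$ and the Claim below is trivial, so I may assume $q_j\not\equiv 0$. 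Finally, set $U:=Y\setminus\overline{\bigcup_{k\ge K}T_k(\ker p_N\cap X_j)}$; by hypothesis $U$ is non‑empty and open, and since $0=T_k0$ belongs to the set being closed, $0\notin U$, so in particular $Y\ne\{0\}$.

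The heart of the argument is the following Claim: there is a non‑zero vector $x\in M\cap\ker p_N\cap X_j$. Granting this, the proof finishes quickly: $x$ is hypercyclic, so $\{T_kx:k\ge K\}$ is still dense in $Y$ (deleting the finitely many points $T_0x,\dots,T_{K-1}x$ from a dense subset of the non‑trivial space $Y$ cannot destroy density). Hence there is $k\ge K$ with $T_kx\in U$; but $x\in\ker p_N\cap X_j$ forces $T_kx\in\bigcup_{k'\ge K}T_{k'}(\ker p_N\cap X_j)$, a set disjoint from $U$, a contradiction. So no hypercyclic subspace of type~$2$ exists, provided the Claim holds.

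It remains to construct $x$, and this is the delicate point. The idea is a recursive ``staircase'' construction inside $M$, in the spirit of the proof of Theorem~\ref{thm X0}. Put $\sigma_m:=\max(p_m,q_j\circ T_1,\dots,q_j\circ T_m)$; this is an increasing sequence of continuous seminorms defining the topology of $X$, and since $M$ is of type~$2$, for every finite set $F\subset\N$ the space $M\cap\ker p_N\cap\bigcap_{k\in F}\ker(q_j\circ T_k)$ is infinite‑dimensional. One fixes a non‑zero $z_0\in M\cap\ker p_N$ and a continuous seminorm $\mu$ with $\mu(z_0)>0$, and then recursively chooses strictly increasing times $k_1<k_2<\cdots$ and $\sigma$‑small vectors $z_r\in M\cap\ker p_N$ ($r\ge 1$), so that $x:=\sum_{r\ge0}z_r$ converges with $\mu(x)>0$ (hence $x\in M\cap\ker p_N\setminus\{0\}$), in such a way that for every $r$ one has $q_j(T_{k_r}(\sum_{s\le r}z_s))=0$ while $q_j(T_{k_r}z_s)=0$ for all $s>r$; then $q_j(T_{k_r}x)=0$ for every $r$, so $x\in X_j$. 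The constraint on future $z_s$ is a single finite‑intersection condition and costs nothing by the type~$2$ property. To arrange the vanishing at step $r$ one uses that the partial sum $\zeta:=\sum_{s<r}z_s$ is non‑zero, hence hypercyclic, so that — because $\{T_k\zeta\}$ is dense and $q_j\not\equiv 0$ — the values $q_j(T_k\zeta)$ are arbitrarily small along infinitely many $k$; one picks $k_r>k_{r-1}$ with $q_j(T_{k_r}\zeta)$ as small as needed and then corrects by a small $z_r$ lying in the appropriate infinite‑dimensional subspace of $M$ so that the partial sum $\zeta+z_r$ lands \emph{exactly} in $\ker(q_j\circ T_{k_r})$.

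The main obstacle is precisely this last step. Making the small correction $z_r$ put the partial sum \emph{exactly} into $\ker(q_j\circ T_{k_r})$ requires a solvability statement for the map $z\mapsto q_j(T_{k_r}z)$ on the relevant infinite‑dimensional subspace of $M$, and it is here that the adaptive, block‑by‑block bookkeeping of the Theorem~\ref{thm X0} proof (with its two‑parameter indexing and the order $\prec$) is genuinely needed: the naive ``nested kernels'' construction — choosing $z_r\in M\cap\ker\sigma_{m_r}$ with $m_r\uparrow\infty$ — only produces a vector whose zero‑times of $q_j\circ T_k$ form an initial segment, not an infinite set, which is insufficient to land in $X_j$. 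Interleaving the times $k_r$ adaptively so that the ``return times'' recur infinitely often, while simultaneously keeping all perturbations small enough for convergence and for $x\ne 0$, is the real content; once this bookkeeping is set up correctly, the remaining verifications are routine.
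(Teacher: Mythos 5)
Your reduction to $E=X$ and your observation that a non-zero vector $x\in M\cap\ker p_N\cap X_j$ would immediately contradict hypercyclicity are both fine, and they match the paper's opening observation. But the entire weight of your argument rests on the Claim, and the Claim is not proved: the step where you ``correct by a small $z_r$ \dots so that the partial sum $\zeta+z_r$ lands \emph{exactly} in $\ker(q_j\circ T_{k_r})$'' is exactly the point at which the construction breaks down, and you acknowledge it is ``the real content'' without supplying it. Solving $q_j\bigl(T_{k_r}(\zeta+z_r)\bigr)=0$ means solving the affine condition $T_{k_r}z_r\in -T_{k_r}\zeta+\ker q_j$ with $z_r$ ranging over the subspace $M\cap\ker p_N\cap\bigcap_{s<r}\ker(q_j\circ T_{k_s})$; there is no reason the image of that subspace under $T_{k_r}$, modulo $\ker q_j$, should meet the required coset, let alone with a small preimage. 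Worse, your construction never invokes the non-density hypothesis, so if it worked it would prove that \emph{every} type-$2$ hypercyclic subspace meets $\ker p_N\cap X_j$ non-trivially for all $N,j$ --- yet the easy consequence of the theorem's hypothesis (which you yourself derive) is that $M\cap\ker p_N\cap E\cap X_j=\{0\}$. You are therefore trying to construct, from the type-$2$ property alone, a vector whose non-existence is the two-line consequence of the hypothesis; the construction is fighting against the grain and there is no mechanism to make it succeed.

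The paper argues in the opposite direction. From the hypothesis it first deduces that every non-zero $u\in M\cap\ker p_N\cap E$ satisfies: $q_j(T_ku)\neq 0$ for all $k$ beyond some $K_u$ (since $u$ is hypercyclic, $T_ku\in U$ for some $k\ge K$, which forces $u\notin X_j$). It then amplifies this ``ultimate non-vanishing'' into a genuine obstruction: choosing $u_n\in M\cap\ker p_{n+N}\cap E$ (so that $\sum_n\alpha_nu_n$ converges for \emph{every} choice of scalars) and picking $\alpha_n$ recursively so large that $q_j\bigl(T_k\sum_{l\le n}\alpha_lu_l\bigr)>1$ for all $K_1<k\le K_{n+1}$ --- using at each stage that either $q_j(T_ku_n)\neq0$ (take $\alpha_n$ large) or $q_j(T_ku_n)=0$ (the bound persists by induction) --- it produces a non-zero $u\in M$ with $q_j(T_ku)\ge1$ for all $k>K_1$, which is not hypercyclic. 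If you want to repair your proof, this scalar-amplification argument is the missing ingredient; the block-by-block bookkeeping of Theorem~\ref{thm X0} that you appeal to is designed for the positive direction and does not transfer here.
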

\end{samepage}
\begin{proof}
By hypothesis, there exist a non-empty open set $U\subset Y$, $j,N,K\ge 1$ and a subspace of finite codimension $E$ in $X$ such that
\[\Big(\bigcup_{k\ge K}T_k\big(\ker p_N\cap E \cap X_j\big)\Big)\cap U=\emptyset.\]
A direct consequence is that for any $x\in \ker p_N\cap E$, if there exists $k\ge K$ such that $T_kx\in U$, then $(q_j(T_k x))$ is ultimately non-zero.
Suppose that there exists a hypercyclic subspace $M$ of type $2$. Then we can consider a sequence $(u_n)_{n\ge 1}$ of non-zero vectors such that for any $n\ge 1$, $u_n\in M\cap\ker p_{n+N}\cap E$.
In particular, for any $n\ge 1$, $u_n$ is hypercyclic and there thus exists $k_n\ge K$ such that $T_{k_n}u_n\in U$. Since $u_n\in \ker p_N\cap E$, we deduce by our previous reasoning that there exists $K_n\ge 1$ such that for any $k>K_n$, \[q_j(T_k u_n)\ne 0.\]
We seek to construct a sequence $(\alpha_n)_{n\ge 1}\in \mathbb{R}^{\mathbb{N}}$ such that if we let $u=\sum_{n=1}^{\infty} \alpha_n u_n$, which exists because $u_n\in \ker p_{n+N}$, then, for any $k> K_1$, we have $q_j(T_k(u))\ge 1$. That will give us the desired contradiction as $u\in M\backslash\{0\}$ and $u$ is not hypercyclic. To this end, we start by choosing $\alpha_1$ such that for any $K_1<k\le K_2$, we have
\[q_j(\alpha_1T_k u_1)>1.\]
Then we choose $\alpha_n$ recursively such that for any $K_1<k\le K_{n+1}$, we have
\[q_j\Big(T_k\Big(\sum_{l=1}^{n-1}\alpha_l u_l\Big)+\alpha_n T_k u_n\Big)>1.\]
Such a choice exists because for any $K_n< k\le K_{n+1}$, we have $q_j(T_k(u_n))\ne 0$ and for any $K_1<k\le K_n$, either we have $q_j(T_k(u_n))\ne 0$ and we have just to choose $\alpha_n$ sufficiently large, or we have $q_j(T_k(u_n))= 0$ and we have the desired inequality by the induction hypothesis. We conclude by continuity of $(T_k)$ and $q_j$.
\end{proof}

\section{Some examples of hypercyclic subspaces of type 2}\label{extype2}

\subsection{Universal series}\label{univ series}

We refer to Section \ref{section univ} for notations and definitions about universal series.
We only recall that $A$ is a Fréchet space of sequences whose topology is given by an increasing sequence of seminorms $(p_n)_{n\ge 1}$, $X$ is a separable topological vector space whose topology is given by an increasing sequence of seminorms $(q_j)_{j\ge 1}$, $(x_n)_{n\ge 0}$ is a sequence in $X$ and $S_k:A\rightarrow X$ is the operator defined by \[S_k((a_n)_{n\ge 0})=\sum_{n=0}^{k}a_nx_n.\] 

\begin{samepage} Thanks to the criteria of Section \ref{type2}, we obtain the following two results:

\begin{theorem}\label{condpas}
Suppose that for any $n\ge 1$, $\ker p_n$ is a subspace of finite codimension.
If for any $N\ge 0$,
\[\bigcup_{M\ge N}\left( \text{\upshape{span}}\{x_k:N\le k\le M \}\cap \text{\upshape{span}}\{x_k:M+1\le k\}\right)\] is dense in $X$, then the sequence $(S_k)$ possesses a hypercyclic subspace.
\end{theorem}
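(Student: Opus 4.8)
The plan is to derive the result from Corollary~\ref{cor X0}, applied to the sequence $(S_k)$ of operators from $A$ to $X$, with $A$ in the role of the domain space of Section~\ref{type2} and $X$ in the role of the target space. First I would check that this is legitimate: $A$ is a separable Fréchet space, it is infinite-dimensional since it contains all polynomials, and it has no continuous norm, for a continuous norm would be dominated by some $p_n$, whence $\ker p_n=\{0\}$, contradicting the hypothesis that each $\ker p_n$ has finite codimension in the infinite-dimensional space $A$; also $X$ is a separable topological vector space, and the hypothesis on the $p_n$ is exactly the standing hypothesis of Corollary~\ref{cor X0}. It then remains to produce an increasing sequence $(n_k)$ and a set $X_0\subseteq A$ satisfying conditions (1) and (2) of that corollary.

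I would take $n_k=k$ and
\[X_0=\Bigl\{a\in A:\ a\ \text{has finite support and}\ \sum_{n\ge 0}a_nx_n=0\Bigr\},\]
i.e.\ the polynomials $a$ with $S_{d(a)}a=0$. Condition~(1) is then immediate: for $a\in X_0$ and every $k\ge d(a)$ we have $S_ka=\sum_{n=0}^{d(a)}a_nx_n=0$, so $(q_j(S_ka))_k$ is ultimately zero for each $j$.

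The main step is condition~(2): for each $K\ge 1$, $\bigcup_{k\ge K}S_k(X_0)$ must be dense in $X$. The key observation — and the reason the hypothesis takes the form stated — is that $S_k(X_0)$ contains precisely the intersections occurring there. Indeed, if $v\in\text{span}\{x_n:0\le n\le k\}\cap\text{span}\{x_n:n\ge k+1\}$, write $v=\sum_{n=0}^{k}a_nx_n$ and $v=\sum_{n=k+1}^{D}b_nx_n$ for a suitable $D>k$; then the polynomial $c:=\sum_{n=0}^{k}a_ne_n-\sum_{n=k+1}^{D}b_ne_n$ lies in $A$, satisfies $\sum_n c_nx_n=v-v=0$ and hence belongs to $X_0$, and $S_kc=\sum_{n=0}^{k}a_nx_n=v$. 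Thus $S_k(X_0)\supseteq\text{span}\{x_n:0\le n\le k\}\cap\text{span}\{x_n:n\ge k+1\}$, and a fortiori $S_k(X_0)\supseteq\text{span}\{x_n:K\le n\le k\}\cap\text{span}\{x_n:n\ge k+1\}$. Taking the union over $k\ge K$ and invoking the hypothesis with $N=K$ gives the density of $\bigcup_{k\ge K}S_k(X_0)$. Corollary~\ref{cor X0} then yields a hypercyclic subspace for $(S_k)$.

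I expect no serious obstacle. The one substantive point is spotting that $X_0$ should be the set of polynomials annihilated by $a\mapsto\sum_n a_nx_n$, so that the image $S_k(X_0)$ matches the intersection $\text{span}\{x_n:N\le n\le k\}\cap\text{span}\{x_n:n\ge k+1\}$; after that everything is a routine translation between the two sets of notation, the only care needed being to apply the density hypothesis with $N=K$ (rather than $N=0$) so as to remain inside $\bigcup_{k\ge K}$ as required by the corollary.
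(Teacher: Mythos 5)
Your proof is correct and follows essentially the same route as the paper: both reduce the statement to Corollary~\ref{cor X0} by taking $X_0$ to consist of polynomials $a$ with $\sum_n a_nx_n=0$ (the paper uses a specific family $a^{(y,N,l)}=\sum_{k=N}^Ma_ke_k-\sum_{k=M+1}^La_ke_k$ of such polynomials), so that $S_M$ maps $X_0$ onto the intersections in the hypothesis. Your observation that the hypothesis must be invoked with $N=K$ to obtain density of $\bigcup_{k\ge K}S_k(X_0)$ is exactly the point the paper's choice of $a^{(y,N,l)}$ is designed to handle.
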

\end{samepage}
\begin{proof}
Let $y\in X$, $N\ge 0$ and $l\ge 1$. There exist $N\le M<L$ and $a_N,\dots,a_L\in \K$
such that
\[q_l\Big(\sum_{k=N}^Ma_kx_k-y\Big)<\frac{1}{l} \quad \text{and}\quad \sum_{k=N}^Ma_kx_k=\sum_{k=M+1}^La_kx_k.\]
Let $a^{(y,N,l)}=\sum_{k=N}^Ma_ke_k-\sum_{k=M+1}^La_ke_k$.
We then have $q_l(S_Ma^{(y,N,l)}-y)<\frac{1}{l}$ and $S_ka^{(y,N,l)}=0$ for any $k\ge L$.
We deduce that the conditions of Corollary \ref{cor X0} are satisfied for $X_0=\{a^{(y,N,l)}:y\in X,\ N\ge 0,\ l\ge 1 \}$.
\end{proof}

\begin{theorem}\label{ser phc}
Suppose that for any $n\ge 1$, $\ker p_n$ is a subspace of finite codimension.
If there exist $N\ge 0$ and $j\ge 1$ such that
\[\bigcup_{M\ge N}\left( \text{\upshape{span}}\{x_k:N\le k\le M \}\cap \Big(\text{\upshape{span}}\{x_k:M+1\le k\}+\ker q_j\Big)\right)\] is not dense in $X$,
then the sequence $(S_k)$ does not possess any hypercyclic subspace.
\end{theorem}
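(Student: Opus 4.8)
The plan is to derive this from Theorem~\ref{thm E2}, the ``no hypercyclic subspace of type $2$'' criterion, together with the observation that when every $\ker p_n$ has finite codimension in $A$, a hypercyclic subspace is automatically of type $2$ (by Proposition~\ref{prop2type} and the fact that $A$ has no continuous norm since $\ker p_1$ would then have finite codimension, forcing $A$ to be finite-dimensional—wait, that needs care). More precisely, I would first check that under the hypothesis on the $p_n$, the space $A$ is either finite-dimensional (excluded, as $A$ contains all polynomials) or has no continuous norm; then every hypercyclic subspace $M$ satisfies $M\cap\ker p_n$ infinite-dimensional for all $n$, i.e. is of type $2$. Hence it suffices to rule out type-$2$ hypercyclic subspaces for $(S_k)$, which is exactly what Theorem~\ref{thm E2} does.

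The core step is to translate the ``not dense'' hypothesis into the hypothesis of Theorem~\ref{thm E2}. Fix the given $N\ge 0$ and $j\ge 1$. I would take $E=A$ (finite codimension, trivially), keep the same $q_j$ on $X$, and set $p_{\tilde N}$ to be a seminorm on $A$ whose kernel forces the valuation of a polynomial to exceed $N$; concretely, since the coordinate projections $P_0,\dots,P_N$ are continuous, there is a continuous seminorm $\pi$ on $A$ with $\ker\pi\subset\{a: a_0=\dots=a_N=0\}$, and I would choose $\tilde N$ with $p_{\tilde N}\ge \pi$ (up to constant), so that $\ker p_{\tilde N}\subset\{a: v(a)>N\}$. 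The remaining task is to identify the set $\bigcup_{k\ge K}S_k(\ker p_{\tilde N}\cap A\cap X_j)$ appearing in Theorem~\ref{thm E2}, where $X_j=\{a\in A:\#\{k:q_j(S_k a)=0\}=\infty\}$. For $a\in A$ a polynomial with $v(a)>N$ and $d(a)=L$, we have $S_k a=\sum_{n=N+1}^{L}a_n x_n$ stabilised for $k\ge L$; the condition $a\in X_j$ says $q_j(\sum_{n=N+1}^{L}a_n x_n)=0$ eventually, which for a polynomial is just $q_j(S_L a)=0$, i.e. $\sum_{n=N+1}^{L}a_n x_n\in\ker q_j$. Writing such an $a$ as $a=\sum_{N+1\le n\le M}a_n e_n+\sum_{M+1\le n\le L}a_n e_n$ with $M$ the largest index $\le$ something and reorganising, I expect $S_M a=\sum_{N+1\le n\le M}a_n x_n$ to range exactly over $\mathrm{span}\{x_k:N\le k\le M\}\cap(\mathrm{span}\{x_k:M+1\le k\}+\ker q_j)$ as $a$ varies, matching the displayed union in the statement.

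The main obstacle I anticipate is the bookkeeping in that last identification: one must show that $\bigcup_{k\ge K}S_k(\ker p_{\tilde N}\cap X_j)$ is \emph{contained in} the closure of the union in the hypothesis (so that its non-density follows), and this requires arguing that for $a\in\ker p_{\tilde N}\cap X_j$ and any $k\ge K$, the vector $S_k a$ lies in $\mathrm{span}\{x_i:N\le i\le k\}\cap(\mathrm{span}\{x_i:i\ge k+1\}+\ker q_j)$, or else that $S_k a$ is approximated by such vectors. The subtlety is that $a\in X_j$ controls $q_j(S_L a)$ only for $L\ge d(a)$, not for the intermediate $k$; but since $S_k a=S_{d(a)}a$ for $k\ge d(a)$ and there are infinitely many such $k\ge K$, one may restrict attention to $k=d(a)$, writing $S_{d(a)}a=\sum_{N+1\le n\le d(a)}a_n x_n$, which lies in $\mathrm{span}\{x_i:N\le i\le d(a)\}$ and, being in $\ker q_j$, also lies in $\mathrm{span}\{x_i:i\ge d(a)+1\}+\ker q_j$ trivially. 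Thus every element of $\bigcup_{k\ge K}S_k(\ker p_{\tilde N}\cap X_j)$ already sits in the hypothesised union (with $M=d(a)$), which is not dense; so the hypothesis of Theorem~\ref{thm E2} holds and we conclude. I would present the seminorm-choice lemma first, then the containment, then invoke Theorem~\ref{thm E2} and the type-$2$ reduction.
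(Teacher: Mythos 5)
Your overall strategy --- reduce to Theorem~\ref{thm E2} after observing that, when every $\ker p_n$ has finite codimension, every hypercyclic subspace is automatically of type $2$ (indeed $M\cap\ker p_n$ has finite codimension in $M$, hence is infinite-dimensional; no detour through the existence or not of a continuous norm is needed), choose a seminorm whose kernel forces the valuation past $N$, and then show that $\bigcup_{k\ge K}S_k(\ker p_{\tilde N}\cap E\cap X_j)$ sits inside the non-dense set of the hypothesis --- is exactly the paper's route. The gap is in the core verification. The set $\ker p_{\tilde N}\cap X_j$ consists of arbitrary elements of $A$, not only polynomials, and the union in Theorem~\ref{thm E2} contains $S_k a$ for \emph{every} $k\ge K$. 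Your claim that ``one may restrict attention to $k=d(a)$'' is backwards for a non-density argument: to show the union avoids an open set you must control every partial sum $S_k a$, including the intermediate ones with $K\le k<d(a)$ (which in general differ from $S_{d(a)}a$) and the partial sums of non-polynomial $a\in X_j$, for which $d(a)$ is undefined. Note also that your argument only ever uses the $\ker q_j$ summand of the target set, never $\text{\upshape{span}}\{x_k:M+1\le k\}$; if the restriction to $k=d(a)$ were legitimate, the image union would land inside $\ker q_j$, which is never dense once $q_j\ne 0$, and Theorem~\ref{thm E2} would then forbid hypercyclic subspaces for every $(S_k)$ whenever $X$ carries a non-zero continuous seminorm --- contradicting Theorem~\ref{condpas}.

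The missing step, which is the actual content of the paper's set identity, is the following decomposition. Fix $a\in A$ with $a_n=0$ for $n<N$ and $a\in X_j$, and fix any $M\ge N$. Since $q_j(S_L a)=0$ for infinitely many $L$, choose such an $L>M$ and write
\[
S_M a \;=\; S_L a-\sum_{n=M+1}^{L}a_n x_n .
\]
The first term lies in $\ker q_j$, the second in $\text{\upshape{span}}\{x_k:M+1\le k\}$, and $S_M a=\sum_{n=N}^{M}a_nx_n\in\text{\upshape{span}}\{x_k:N\le k\le M\}$; hence $S_M a$ belongs to the displayed union of the hypothesis (with this same $M$). This handles all $k$ and all $a$, polynomial or not, and it is precisely here that the $\text{\upshape{span}}\{x_k:M+1\le k\}$ term in the statement is needed. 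With this correction the application of Theorem~\ref{thm E2} goes through as you intended.
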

\begin{proof}
Let $N\ge 0$, $j\ge 1$ such that
\[\bigcup_{M\ge N}\left( \text{span}\{x_k:N\le k\le M \}\cap \Big(\text{span}\{x_k:M+1\le k\}+\ker q_j\Big)\right)\] is not dense in $X$.
Let $X_{j}=\{a\in A: \#\{k:q_j(S_{k}a)=0\}=\infty\}$.
We remark that 
\begin{multline*}
\bigcup_{M\ge N}S_M\big(\overline{\text{span}}\{e_k:k\ge N\}\cap X_j\big)\\
=\bigcup_{M\ge N}\left( \text{span}\{x_k:N\le k\le M \}\cap \Big(\text{span}\{x_k:M+1\le k\}+\ker q_j\Big)\right).
\end{multline*}
We conclude by Theorem \ref{thm E2}.
\end{proof}

In the case where $X$ possesses a continuous norm, we obtain the following generalization of the characterization given by Charpentier, Mouze and the author for the Fréchet space $\omega$ in \cite{Charpentier2}.

\begin{cor}
Suppose that $X$ possesses a continuous norm and for any $n\ge 1$, $\ker p_n$ is a subspace of finite codimension.
Then the sequence $(S_k)$ possesses a hypercyclic subspace if and only if 
for any $N\ge 0$,
\[\bigcup_{M\ge N}\left(\text{\upshape{span}}\{x_k:N\le k\le M \}\cap \text{\upshape{span}}\{x_k:M+1\le k\}\right)\] is dense in $X$.
\end{cor}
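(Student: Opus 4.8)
The plan is to derive this corollary directly from Theorem~\ref{condpas} and Theorem~\ref{ser phc}, which already handle the two implications separately; the only real work is to see that, when $X$ has a continuous norm, the conditions appearing in those two theorems become exact logical negations of one another. Concretely, the sufficient condition for having a hypercyclic subspace (Theorem~\ref{condpas}) is that for every $N\ge 0$ the union
\[\bigcup_{M\ge N}\left(\text{span}\{x_k:N\le k\le M\}\cap\text{span}\{x_k:M+1\le k\}\right)\]
is dense in $X$, while the sufficient condition for having \emph{no} hypercyclic subspace (Theorem~\ref{ser phc}) is that for some $N\ge 0$ and some $j\ge 1$ the union
\[\bigcup_{M\ge N}\left(\text{span}\{x_k:N\le k\le M\}\cap\big(\text{span}\{x_k:M+1\le k\}+\ker q_j\big)\right)\]
fails to be dense in $X$.

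First I would prove the ``if'' direction: this is immediate, since the density hypothesis of the corollary is exactly the hypothesis of Theorem~\ref{condpas}, and the standing assumption that each $\ker p_n$ has finite codimension is also in force; hence Theorem~\ref{condpas} yields a hypercyclic subspace.

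Next I would prove the ``only if'' direction in contrapositive form. Assume the union in the corollary is not dense in $X$ for some $N\ge 0$. Here is where the continuous norm enters: let $q$ be a continuous norm on $X$. Since a continuous norm has trivial kernel, $\ker q=\{0\}$, and we may arrange the defining seminorm sequence $(q_j)$ of $X$ so that $q_{1}=q$ (or simply note that $q\le q_{j}$ for some $j$ up to rescaling, so $\ker q_{j}\subset\ker q=\{0\}$); in either case there is $j\ge 1$ with $\ker q_j=\{0\}$. Then
\[\text{span}\{x_k:M+1\le k\}+\ker q_j=\text{span}\{x_k:M+1\le k\},\]
so the union appearing in Theorem~\ref{ser phc} coincides with the union in the corollary and is therefore not dense in $X$. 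Applying Theorem~\ref{ser phc} with this $N$ and $j$, we conclude that $(S_k)$ has no hypercyclic subspace, which is the contrapositive of the desired implication.

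The step I expect to require the most care is the bookkeeping around $\ker q_j$: one must make sure that the sequence of seminorms used to describe $X$ can be taken to include (a multiple of) a continuous norm, so that some $\ker q_j$ is genuinely $\{0\}$; this is routine for Fr\'echet spaces with a continuous norm but should be stated explicitly. Everything else is a direct quotation of the two theorems, together with the observation that in the presence of a continuous norm no hypercyclic subspace of type~$1$ can exist either (since every hypercyclic subspace is then automatically of type~$1$ when $\ker q_j=\{0\}$ — but in fact here, because every $\ker p_n$ has finite codimension, there are no type~$1$ hypercyclic subspaces at all, so ``hypercyclic subspace'' and ``hypercyclic subspace of type~$2$'' already coincide and Theorems~\ref{condpas} and~\ref{ser phc} directly address the right notion).
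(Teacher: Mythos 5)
Your proof is correct and is exactly the argument the paper intends (the corollary is stated without proof, as an immediate combination of Theorems~\ref{condpas} and~\ref{ser phc}): the continuous norm on $X$ yields some $j$ with $\ker q_j=\{0\}$, which makes the non-density condition of Theorem~\ref{ser phc} the exact negation of the density condition of Theorem~\ref{condpas}. One small slip in your closing parenthetical: the type of a hypercyclic subspace is determined by the seminorms $(p_n)$ on $A$, not by the $(q_j)$ on $X$, so ``every hypercyclic subspace is of type $1$ when $\ker q_j=\{0\}$'' is not right --- but you immediately give the correct reason (each $\ker p_n$ has finite codimension, hence every hypercyclic subspace is of type $2$), and in any case both theorems are stated for hypercyclic subspaces without type restriction, so nothing further is needed.
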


With a suitable adaptation of previous ideas, we can modify the condition of Theorem~\ref{condpas} to know when $\mathcal{U}_A$ is spaceable. We recall that we denote by ${S^A_k:A\rightarrow A}$ the operator defined by \[S^A_k((a_n)_{n\ge 0})=\sum_{n=0}^{k}a_ne_n.\]

\begin{theorem}
Suppose that for any $n\ge 1$, $\ker p_n$ is a subspace of finite codimension.
If for any $N\ge 0$, any $\varepsilon>0$,
\begin{multline*}
\bigcup_{L>M\ge N}\left( \Big\{\sum_{k=N}^Ma_kx_k:p_N\Big(\sum_{k=N}^Ma_ke_k\Big)<\varepsilon\Big\}\right.\\
\left.\cap\ \Big\{\sum_{k=M+1}^La_kx_k:p_N\Big(\sum_{k=M+1}^La_ke_k\Big)<\varepsilon\Big\}\right)
\end{multline*}
is dense in $X$, then $\mathcal{U}_A$ is spaceable.
\end{theorem}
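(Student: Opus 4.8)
The plan is to apply Theorem~\ref{thm X0} with a suitably chosen sequence $(n_k)$ and set $X_0\subset A$, exactly in the spirit of the proof of Theorem~\ref{condpas}, but now keeping track of the seminorm $p_N$ so as to control the valuations of the polynomials we produce. Since each $\ker p_n$ has finite codimension, condition~2 of Theorem~\ref{thm X0} for $(S_k)$ reduces (by the argument of Corollary~\ref{cor X0}) to asking that for each $n$ and $K$, the union $\bigcup_{k\ge K}S_k(X_0\cap\ker p_n)$ be dense in $X$. So the first step is: fix $y\in X$, $N\ge 0$, $l\ge 1$ and $\varepsilon>0$; by the hypothesis there exist $L>M\ge N$ and scalars $a_N,\dots,a_L$ with
\[q_l\Big(\sum_{k=N}^M a_k x_k-y\Big)<\frac1l,\qquad \sum_{k=N}^M a_k x_k=\sum_{k=M+1}^L a_k x_k,\]
together with $p_N\big(\sum_{k=N}^M a_k e_k\big)<\varepsilon$ and $p_N\big(\sum_{k=M+1}^L a_k e_k\big)<\varepsilon$. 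Set $a^{(y,N,l,\varepsilon)}=\sum_{k=N}^M a_k e_k-\sum_{k=M+1}^L a_k e_k\in A$. Then $q_l(S_M a^{(y,N,l,\varepsilon)}-y)<\frac1l$ and $S_k a^{(y,N,l,\varepsilon)}=0$ for $k\ge L$, so condition~1 of Theorem~\ref{thm X0} holds for any choice of $X_0$ built from these vectors.

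The second and key step is to arrange that these vectors also lie in prescribed kernels $\ker p_n$ of $A$, since for $\mathcal U_A$-spaceability we need the basic sequence $(u_k)$ produced by Lemma~\ref{bas ker} to consist of elements of $\mathcal U_A$, not merely of $\mathcal U\cap A$; this forces us to control $S^A_k a\to a$ in $A$ as well. Here the small-perturbation freedom in the $p_N$-bound is exactly what lets us do this. Given a target seminorm $p_n$ (with $n$ possibly much larger than $N$), split $a^{(y,N,l,\varepsilon)}$ as its first block $b_1=\sum_{k=N}^M a_k e_k$ and second block $b_2=-\sum_{k=M+1}^L a_k e_k$; by increasing $M$ (i.e. pushing the second block further out) and re-selecting, we get $p_n(b_1)<\varepsilon$ and $p_n(b_2)<\varepsilon$ for the given $n$, hence $p_n(a^{(y,N,l,\varepsilon)})<2\varepsilon$. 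Following the construction in the proof of Theorem~\ref{thm X0} verbatim, one builds $u_k=\sum_l x_{k,l}$ with $x_{k,l}$ of the above form, chosen so that $u_k\in\ker p_k\setminus\ker p_{k+1}$; Lemma~\ref{bas ker} then gives $M=\overline{\mathrm{span}}\{u_k\}=\{\sum\alpha_k u_k\}$, and every nonzero $u=\sum\alpha_k u_k$ is hypercyclic for $(S_{n_k})$ by the same interleaving-of-indices argument. To upgrade ``hypercyclic for $(S_{n_k})$'' to ``$\in\mathcal U_A$'', observe that each $x_{k,l}$ is a polynomial, so $u$ is a (convergent) series whose partial-sum behavior in $A$ we can also control: along the same subsequence $n_k$ witnessing $S_{n_k}u\to x$ in $X$, the tail of $u$ beyond index $n_k$ has small $p_N$-norm by the $\varepsilon$-bounds, which is precisely the statement $S^A_{n_k}u\to u$ in $A$.

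The main obstacle I anticipate is bookkeeping, not conceptual: one must simultaneously satisfy (i) the valuation/kernel condition $u_k\in\ker p_k\setminus\ker p_{k+1}$ required by Lemma~\ref{bas ker}, (ii) the density condition with the $p_N$-constraint for every $(y,N,l,\varepsilon)$, and (iii) the $A$-convergence $S^A_{n_k}u\to u$ for the limiting vector $u$ — all while the two blocks of each $a^{(y,N,l,\varepsilon)}$ must be pushed out far enough that their $p_n$-norms are small for the relevant $n$. The recursive index-scheme $(K_j^{k,l})$ from the proof of Theorem~\ref{thm X0} already handles the interleaving for (i) and (ii); one just threads the extra $\varepsilon$-smallness of each block through that same recursion, choosing $\varepsilon$ at stage $(k,l)$ small enough (say $\le 2^{-(k+l)}$) that the tails of $u=\sum\alpha_k u_k$ are controlled in $A$ uniformly on the relevant finite sets of coefficients, giving (iii). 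I would therefore write the proof as: ``define $X_0$ as the set of all $a^{(y,N,l,\varepsilon)}$ as above; the hypothesis shows $\bigcup_{k\ge K}S_k(X_0\cap\ker p_n)$ is dense in $X$ for each $n,K$; now repeat the construction of Theorem~\ref{thm X0}, additionally recording that each building block has $p_N$-norm $<\varepsilon$, and check that the resulting subspace $M$ lies in $\mathcal U_A\cup\{0\}$'' — with the $S^A_k$-convergence verified by the tail estimate. This keeps the argument short and defers all routine estimates to the already-established machinery.
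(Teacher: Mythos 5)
Your overall route is the paper's: augment the construction behind Theorem~\ref{thm X0}/Corollary~\ref{cor X0} with $p_N$-control of the two blocks so that the limit vectors also satisfy $S^A_{n_k}a\to a$, then conclude with Lemma~\ref{bas ker}. But there is a genuine gap at the step where you pass from ``$p_n$-small'' building blocks to a basic sequence $(u_k)$ with $u_k\in\ker p_k\setminus\ker p_{k+1}$. The mechanism you describe (pushing the second block further out and re-selecting) only yields $p_n(a^{(y,N,l,\varepsilon)})<2\varepsilon$, not $p_n(a^{(y,N,l,\varepsilon)})=0$; and exact vanishing is indispensable, both because Lemma~\ref{bas ker} requires it and because the coefficients $(\alpha_k)$ in $\sum_k\alpha_k u_k$ range over all of $\K^{\N}$, so they are not bounded on any ``relevant finite set'': a tail estimate of the form $\sum_{(k,l)\succ(k_0,l_0)}|\alpha_k|\,\varepsilon_{k,l}$ cannot be made small uniformly, no matter how fast $\varepsilon_{k,l}\to0$. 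The missing idea is the finite-codimension trick: take $d+1$ disjointly supported polynomials $s_1,\dots,s_{d+1}$ of the form you constructed (where $d$ is the codimension of $\ker p_N$) and a combination $\sum_j\lambda_js_j\in\ker p_N\setminus\{0\}$ normalized so that $\max_j|\lambda_j|\le1$ and $\lambda_{j_0}=1$; one must then check that this combination still has a split point $M_{j_0}$ with $S_{M_{j_0}}\big(\sum_j\lambda_js_j\big)=S_{M_{j_0}}s_{j_0}\in U$ (using $S_{d(s_j)}s_j=0$ for $j<j_0$) and $p_N\big(\sum_j\lambda_js_j-S^A_{M_{j_0}}\sum_j\lambda_js_j\big)<(d+1)\varepsilon$ (using $|\lambda_j|\le1$ and $p_N(s_j)<\varepsilon$ for $j>j_0$). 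This is where the $\varepsilon$ in the hypothesis is actually consumed, and it is not the ``thread $\varepsilon\le 2^{-(k+l)}$ through the recursion'' argument you propose.

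Once the blocks $a^{k,l}$ satisfy $p_{k+l}(a^{k,l})=0$ exactly, the final verification of $S^A_{n_{k_0,l_0}}a\to a$ involves no summation of small quantities against the $\alpha_k$ at all: every term of the tail with $(k,l)\succ(k_0,l_0)$ is annihilated by $p_{l_0}$ because $k+l\ge l_0$, and only the single term $a^{k_0,l_0}-S^A_{n_{k_0,l_0}}a^{k_0,l_0}$, whose coefficient is normalized to $1$, survives, bounded by $1/l_0$. So the architecture you chose is the right one, but the two load-bearing details --- exact kernel membership via the linear-combination trick, and the consequent reduction of the tail estimate to a single term --- are either missing or replaced by an argument that would not survive unbounded coefficient sequences.
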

\begin{proof}
Let $U$ be a non-empty open set in $X$, $N\ge 0$ and $\varepsilon>0$. By hypothesis, there exist a polynomial $a=\sum_{k=N}^La_ke_k\in A$ and $N\le M<L$ such that \[p_N(a)< \varepsilon,\ p_N(a-S^A_Ma)<\varepsilon,\ S_Ma\in U\ \text{and}\ S_La=0.\] 
Since $\ker p_N$ is a subspace of finite codimension, there even exists such a polynomial $a$ such that $p_N(a)=0$.
Indeed, if we let $d$ be the codimension of $\ker p_N$, we know that there exist polynomials $s_1,\dots,s_{d+1}\in A$ such that for any $1\le j\le d+1$, for some $v(s_j)\le M_j<d(s_j)$,
\[p_{N}(s_j)< \varepsilon,\ p_{N}(s_j-S^A_{M_j}s_j)<\varepsilon,\ S_{M_j}s_j\in U\ \text{and}\ S_{d(s_j)}s_j=0\]
and such that additionally $d(s_j)<v(s_{j+1})$ for any $1\le j\le d$. There then exist $\lambda_1,\dots,\lambda_{d+1}\in \K$  such that $\lambda_1 s_1+\cdots+\lambda_{d+1} s_{d+1}\in \ker p_N\backslash\{0\}$.
Moreover, since we can suppose that $\max |\lambda_i| \le 1$ and $\lambda_k=1$ for some $1\le k\le d+1$, we deduce that for any $N\ge 1$, any $\varepsilon>0$ and for any non-empty open set $U$, we can construct a non-zero polynomial $a$ with valuation as large as desired such that for some $v(a)\le M< d(a)$,
\[p_N(a)=0,\ p_N(a-S^A_Ma)<\varepsilon,\ S_{M}a\in U\ \text{and}\ S_{d(a)}a=0.\]

Let $(y_l)_{l\ge1}$ be a dense sequence in $X$. We construct recursively a family of polynomials $(a^{k,l})_{k,l\ge 1}$, for the strict order $\prec$ defined by $(k',l')\prec(k,l)$ if $k'+l'<k+l$ or if $k'+l'=k+l$ and $l'<l$, such that
\begin{itemize}
\item for any $(k',l')\prec (k,l)$, $d(a^{k',l'})<v(a^{k,l})$;
\item for any $k,l\ge 1$, there exists $v(a^{k,l})\le n_{k,l}< d(a^{k,l})$ such that 
\[ p_{k+l}(a^{k,l})=0,\ p_{l}(a^{k,l}-S^A_{n_{k,l}}a^{k,l})<\frac{1}{l}, q_l(S_{n_{k,l}}a^{k,l}-y_l)<\frac{1}{l}\ \text{and}\ S_{d(a^{k,l})}a^{k,l}=0  ;\]
\item for any $k\ge 1$, $p_{k+2}(a^{k,1})\ne 0$ (changing $p_{k+2}$ if necessary).
\end{itemize}
We can change $p_{k+2}$ during the construction because we consider this one only after the choice of $a^{k,1}$.

Let $a^{(k)}=\sum_{l=1}^{\infty} a^{k,l}$. We remark that we have 
$p_{k+1}(a^{(k)})=0$ and $p_{k+2}(a^{(k)})\ne 0$, and thus by Lemma \ref{bas ker} that \[\overline{\text{span}}\{a^{(k)}:k\ge 1\}=\Big\{\sum_{k=1}^{\infty}\alpha_ka^{(k)}:(\alpha_k)\in\mathbb{K}^{\mathbb{N}}\Big\}.\] 
Therefore, if $a=\sum_{k=1}^{\infty}\alpha_k a^{(k)}$ with $\alpha_{k_0}=1$, we deduce that for any $l_0\ge1$,
\begin{equation*}
q_{l_0}(S_{n_{k_0,l_0}}a-y_{l_0})
= q_{l_0}(S_{n_{k_0,l_0}}a^{k_0,l_0}-y_{l_0})<\frac{1}{l_0}\xrightarrow[l_0\rightarrow \infty]{} 0
\end{equation*}
and since $(k,l)\succ (k_0,l_0)$ implies $k+l\ge l_0$,
\begin{align*}
p_{l_0}(a-S^A_{n_{k_0,l_0}}a)&=p_{l_0}\Big(a^{k_0,l_0}-S^A_{n_{k_0,l_0}}a^{k_0,l_0}+\sum_{(k,l)\succ(k_0,l_0)}\alpha_k a^{k,l}\Big)\\
&=p_{l_0}(a^{k_0,l_0}-S^A_{n_{k_0,l_0}}a^{k_0,l_0})<\frac{1}{l_0}\xrightarrow[l_0\rightarrow \infty]{} 0.
\end{align*}
We conclude that $\overline{\text{span}}\{a^{(k)}:k\ge 1\}\subset \mathcal{U}_A\cup\{0\}$ and thus $\mathcal{U}_A$ is spaceable.

\end{proof}

\begin{theorem}\label{ser phc2}
Suppose that for any $n\ge 1$, $\ker p_n$ is a subspace of finite codimension.
If there exist $N\ge 0$, $j\ge 1$ and $\varepsilon>0$ such that
\begin{multline*}
\bigcup_{L>M\ge N}\left(\Big\{\sum_{k=N}^Ma_kx_k:p_N\Big(\sum_{k=N}^Ma_ke_k\Big)<\varepsilon\Big\}\right.\\
\left.\cap\ \Big(\Big\{\sum_{k=M+1}^La_kx_k:p_N\Big(\sum_{k=M+1}^La_ke_k\Big)<\varepsilon\Big\}+\ker q_j\Big)\right)
\end{multline*} is not dense in $X$,
then $\mathcal{U}_A$ is not spaceable.
\end{theorem}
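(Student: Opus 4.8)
The plan is to deduce this from the criterion for not having a hypercyclic subspace of type $2$, namely Theorem~\ref{thm E2}, exactly as Theorem~\ref{ser phc} was deduced from it; the only extra work is to match up the quantities $p_N$-balls appearing in the hypothesis with the ``finite codimension'' structure so that we can take $E=A$. First I would recall that, since each $\ker p_n$ has finite codimension, there is no continuous norm on $A$ only if $A$ is infinite-dimensional, so that being spaceable is the same as possessing a hypercyclic subspace, which is automatically of type $2$; thus it suffices to rule out hypercyclic subspaces of type~$2$ for $(S_k^A\oplus S_k)$ — or rather, to argue directly about $\mathcal U_A$ as the set of restricted universal series.

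The key step is the set-theoretic identity, analogous to the one in the proof of Theorem~\ref{ser phc}. Let $j,N\ge 1$ and $\varepsilon>0$ be given by the hypothesis, and set $X_j=\{a\in A:\#\{k:q_j(S_ka)=0\}=\infty\}$. I would check that
\[
\bigcup_{M\ge N}\widetilde S_M\Big(\big\{a\in A: v(a)\ge N,\ p_N(a)<\varepsilon\big\}\cap X_j\Big)
\]
equals the union displayed in the statement, where $\widetilde S_M=S_M$ is the partial-sum evaluation into $X$ and the condition $S^A_{d(a)}a\rightarrow a$ (restrictedness) is what forces the ``second half'' $\sum_{k=M+1}^L a_k x_k$ to come from a polynomial piece with controlled $p_N$-size. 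Concretely, a polynomial $a=\sum_{k=N}^L a_k e_k$ with $p_N(a)<\varepsilon$ and $p_N(S^A_M a)<\varepsilon$ (hence $p_N$ of the tail $<2\varepsilon$, absorbing constants into $\varepsilon$) and $S_L a=0$ witnesses membership on both sides; conversely any element of the left-hand union splits in this way. So the hypothesis says precisely that this union is not dense in $X$.

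Next I would invoke Theorem~\ref{thm E2} with $E=A$, that same $j$, and the role of $\ker p_N$ — but here is the one genuine subtlety: Theorem~\ref{thm E2} is stated for $(T_k)$ acting on the ambient Fréchet space, intersecting with $\ker p_N\cap E\cap X_j$, whereas I need the intersection with $\{a:p_N(a)<\varepsilon\}$, an open ball rather than the closed kernel. This is where the finite-codimension hypothesis does real work, just as in the $\mathcal U_A$-spaceability theorem preceding this one: since $\ker p_N$ has finite codimension $d$, any $a$ with $p_N(a)<\varepsilon$ can be corrected by a linear combination of finitely many further polynomials (with disjoint, large valuations, each of small $p_N$-norm and each mapping near the target) to land exactly in $\ker p_N$ while staying in $X_j$ and still mapping into the prescribed open set. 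Hence density of $\bigcup_{M\ge N} S_M(\ker p_N\cap X_j)$ in $X$ would imply density of the union in the statement; contrapositively, non-density in the statement gives non-density of $\bigcup_{k\ge N} S_k(\ker p_N\cap A\cap X_j)$, i.e. the hypothesis of Theorem~\ref{thm E2}, and that theorem then yields that $(S_k)$ — equivalently the restricted-universal-series system — has no hypercyclic subspace of type $2$, so $\mathcal U_A$ is not spaceable.

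The main obstacle I expect is precisely this passage from the $\varepsilon$-ball condition to the kernel condition, i.e. verifying that the finite-codimension correction argument can be carried out while \emph{simultaneously} keeping the $p_N$-norm of both the ``$[N,M]$-part'' and the ``tail $[M+1,L]$-part'' under control and keeping the modified polynomial in $X_j$ (so that $q_j\circ S_k$ vanishes infinitely often). Everything else — the set identity, and the reduction of spaceability to type-$2$ hypercyclic subspaces — is routine bookkeeping mirroring the proofs of Theorem~\ref{ser phc} and the preceding spaceability theorem.
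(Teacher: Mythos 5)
There is a genuine gap, and it sits exactly where you flagged the ``one genuine subtlety''. Your plan is to reduce to Theorem~\ref{thm E2} applied to $(S_k)$ by showing that non-density of the $\varepsilon$-restricted union forces non-density of $\bigcup_{k}S_k(\ker p_N\cap E\cap X_j)$. That reduction cannot work, for two reasons. First, the set identity you need fails: the $\varepsilon$-conditions in the hypothesis constrain the truncations $S^A_Ma$ and $S^A_La-S^A_Ma$ of the witnessing decomposition, and membership of $a$ in $\ker p_N$ gives no control whatsoever on $p_N(S^A_Ma)$, since the seminorms of $A$ are not assumed monotone under truncation; the finite-codimension ``correction'' you invoke runs in the opposite direction (it is used in the preceding \emph{spaceability} theorem to assemble an element of $\ker p_N$ out of several small-$p_N$ polynomials, not to extract small-$p_N$ truncations from an element of $\ker p_N$). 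Second, and more fundamentally, your route would conclude that $(S_k)$ has \emph{no hypercyclic subspace} at all, which is the conclusion of Theorem~\ref{ser phc} under its strictly stronger hypothesis (the union there has no $\varepsilon$-restriction, hence is larger, so its non-density is a stronger assumption). The weaker hypothesis of Theorem~\ref{ser phc2} can only obstruct \emph{restricted} universal series, because the $\varepsilon$-conditions are only usable against vectors $a$ for which $S^A_{n_k}a\to a$; it is compatible with $\mathcal U\cap A$ being spaceable. Any argument that never uses the condition $S^A_{n_k}a\to a$ is therefore proving too much and must be wrong.

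The paper instead gives a direct argument \emph{modeled on} the proof of Theorem~\ref{thm E2} rather than an application of it. Assuming $M_h\subset\mathcal U_A\cup\{0\}$ is a closed infinite-dimensional subspace, one picks non-zero $a^{(n)}\in M_h\cap\ker p_{N+n}$ with $v(a^{(n)})\ge N$; restricted universality yields $K_n$ with $S_{K_n}a^{(n)}\in U$ and $p_N(S^A_{K_n}a^{(n)})<\varepsilon/2$, and the non-density hypothesis then gives the dichotomy: for every $k>K_n$, either $q_j(S_ka^{(n)})\ne 0$ or $p_N(S^A_ka^{(n)})>\varepsilon/2$. A recursive choice of scalars $\alpha_n$ (in the spirit of the proof of Theorem~\ref{thm E2}, but carrying the two alternatives simultaneously) produces $a=\sum_n\alpha_na^{(n)}\in M_h\setminus\{0\}$ such that for all $k>K_1$ either $q_j(S_ka)\ge 1$ or $p_N(a-S^A_ka)=p_N(S^A_ka)\ge\varepsilon/2$, which contradicts the restricted universality of $a$ at the target $x=0$. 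Your proposal is missing precisely this use of the domain-side convergence $S^A_{n_k}a\to a$, which is where the $\varepsilon$-balls enter.
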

\begin{proof}
Let $N\ge 0$, $j\ge 1$, $\varepsilon>0$ and $U\subset X$ a non-empty open set such that $U$ does not intersect
\begin{multline*}
\bigcup_{L>M\ge N}\left( \Big\{\sum_{k=N}^Ma_kx_k:p_N\Big(\sum_{k=N}^Ma_ke_k\Big)<\varepsilon\Big\}\right.\\ 
\left. \cap\Big(\Big\{\sum_{k=M+1}^La_kx_k:p_N\Big(\sum_{k=M+1}^La_ke_k\Big)<\varepsilon\Big\}+\ker q_j\Big)\right).
\end{multline*}
That means that for every sequence $a\in A$ with $v(a)\ge N$, if there exist ${v(a)\le M<L}$ such that $p_N(S^A_Ma)< \varepsilon$, $p_N(S^A_{L} a-S^A_M a)<\varepsilon$ and $S_{M} a\in U$, then $q_j(S_{L} a)\ne 0$.

Suppose that $\mathcal{U}_A\cup\{0\}$ contains a closed infinite-dimensional subspace $M_h$.
For any $n\ge 1$, since $\ker p_{N+n}$ is a subspace of finite codimension, there exits a non-zero sequence $a^{(n)}\in M_h\cap \ker p_{N+n}$ with valuation $v(a^{(n)})\ge N$. Since $a^{(n)}\in \mathcal{U}_A$, there also exists an integer $K_n\ge N+n$ such that $K_n\ge v(a^{(n)})$ and 
\[S_{K_n}a^{(n)}\in U\ \text{and}\ p_N(S^A_{K_n} a^{(n)})<\frac{\varepsilon}{2}.\] 
Thanks to the properties of $K_n$, we get that for any $n\ge 1$, any $k>K_n$, if $p_N(S^A_ka^{(n)})\le \frac{\varepsilon}{2}$, then $p_N(S_k^Aa^{(n)}-S^A_{K_n}a^{(n)})<\varepsilon$ and thus by properties of $N,j,\varepsilon,U$, we have $q_j(S_ka^{(n)})\ne 0$.
We deduce that for any $k>K_n$, we have  
\[q_j(S_ka^{(n)})\ne 0\ \text{ or }\  p_N(S^A_k a^{(n)})>\frac{\varepsilon}{2}.\]
We seek to construct $a=\sum_{n=1}^{\infty} \alpha_n a^{(n)}\in M_h$ such that for any $k> K_1$, we have $q_j(S_k a)\ge 1$ or $p_N(a-S^A_k a)\ge \frac{\varepsilon}{2}$. That will contradict the fact that each non-zero vector in $M_h$ is a restricted universal series.
We remark that since for any $n\ge 1$, $a^{(n)}\in \ker p_{N+n}$, the series $\sum_{n=1}^{\infty} \alpha_n a^{(n)}$ converges in $X$ for every sequence $(\alpha_n)\in \K^{\N}$, and since $M_h$ is a closed subspace,  $\sum_{n=1}^{\infty} \alpha_n a^{(n)}\in M_h$. Moreover, if $a=\sum_{n=1}^{\infty} \alpha_n a^{(n)}$, we have $p_N(a-S^A_k a)=p_N(S^A_k a)$. It thus suffices to construct a sequence $(\alpha_n)\in \K^{\N}$ such that for any $k> K_1$, we have 
\[q_j(S_k a)\ge 1\ \text{or}\ p_N(S^A_k a)\ge \frac{\varepsilon}{2}.\]
We thus begin by choosing $\alpha_1$ such that for any $K_1<k\le K_2$, we have
\[q_j(\alpha_1S_ka^{(1)})>1\ \text{or}\ p_N(\alpha_1 S^A_k a^{(1)})>\frac{\varepsilon}{2}.\]
Then, we choose $\alpha_n$ recursively such that for any $K_1<k\le K_{n+1}$, we have
\[q_j\Big(S_k\Big(\sum_{k=1}^{n-1}\alpha_k a^{(k)}\Big)+\alpha_n S_ka^{(n)}\Big)>1
\ \text{or}\ p_N\Big(S^A_k\Big(\sum_{k=1}^{n-1}\alpha_k a^{(k)}\Big) +\alpha_n S^A_ka^{(n)}\Big)>\frac{\varepsilon}{2}.\]
Such a choice exists because for any $K_n< k\le K_{n+1}$, we have either $q_j(S_k(a^{(n)}))\ne 0$ or $p_N(S^A_k a^{(n)})>\frac{\varepsilon}{2}$ and, for any $K_{1}<k\le K_{n}$, if we have $q_j(S_ka^{(n)})\ne 0$ or $p_N(S^A_k a^{(n)})\ne 0$, then it suffices to choose $\alpha_n$ sufficiently large and if we have $q_j(S_ka^{(n)})= 0$ and $p_N(S^A_k a^{(n)})= 0$, then we have the desired result by the induction hypothesis.
\end{proof}

\begin{cor}\label{spaceable}
Suppose that $X$ possesses a continuous norm and for any $n\ge 1$, $\ker p_n$ is a subspace of finite codimension.
Then the space $\mathcal{U}_A$ is spaceable if and only if for any $N\ge 0$, any $\varepsilon>0$,
\begin{multline*}
\bigcup_{L>M\ge N}\left( \Big\{\sum_{k=N}^Ma_kx_k:p_N\Big(\sum_{k=N}^Ma_ke_k\Big)<\varepsilon\Big\}\right.\\
\left.\cap\ \Big\{\sum_{k=M+1}^La_kx_k:p_N\Big(\sum_{k=M+1}^La_ke_k\Big)<\varepsilon\Big\}\right)
\end{multline*}
is dense in $X$.
\end{cor}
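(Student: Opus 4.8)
The plan is to read the corollary off from the two theorems that immediately precede it, using the continuous norm only to close the small gap between their hypotheses. First I would treat the sufficiency. Assume that the union displayed in the statement is dense in $X$ for every $N\ge 0$ and every $\varepsilon>0$. Together with the standing assumption that each $\ker p_n$ has finite codimension, this is exactly the hypothesis of the spaceability theorem proved just before Theorem~\ref{ser phc2}, so $\mathcal{U}_A$ is spaceable. Note that the continuous norm plays no role in this implication.

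For the necessity I would argue by contraposition. Suppose that for some $N\ge 0$ and some $\varepsilon>0$ the union displayed in the statement fails to be dense in $X$. Since the topology of $X$ is given by the increasing sequence $(q_j)$ and $X$ carries a continuous norm $q$, continuity of $q$ at the origin yields an index $j_0\ge 1$ and a constant $C>0$ with $q\le Cq_{j_0}$; hence $q_{j_0}(x)=0$ forces $q(x)=0$ and so $x=0$, i.e. $q_{j_0}$ is itself a norm and $\ker q_{j_0}=\{0\}$. For the parameters $N$, $j_0$, $\varepsilon$, adding $\ker q_{j_0}=\{0\}$ to the second set inside each intersection changes nothing, so the union appearing in the hypothesis of Theorem~\ref{ser phc2} coincides with the non-dense union displayed in the corollary. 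Theorem~\ref{ser phc2} then applies and shows that $\mathcal{U}_A$ is not spaceable, which is the contrapositive of what we want.

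There is essentially no obstacle here: the only point worth isolating is the elementary fact that a metrizable locally convex space admitting a continuous norm has one of its defining seminorms $q_j$ already equal to a norm, so that $\ker q_j=\{0\}$ for that $j$. Everything else amounts to matching the set in Theorem~\ref{ser phc2} with the set in the corollary once this $\ker q_j$ vanishes, and then quoting the two preceding theorems verbatim. If the unlabelled spaceability theorem turns out to need a label, I would add one and cite it in the sufficiency step.
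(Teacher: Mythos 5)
Your proof is correct and matches the paper's intent: the corollary is stated without proof as an immediate consequence of the two preceding theorems, with sufficiency read off from the unlabelled spaceability theorem and necessity from Theorem~\ref{ser phc2} applied with an index $j_0$ for which $q_{j_0}$ is already a norm (so that $\ker q_{j_0}=\{0\}$ and the two unions coincide). The observation that the continuous norm forces some $q_{j_0}$ to be a norm is exactly the point that makes the second theorem applicable, and you have isolated it correctly.
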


This Corollary together with Theorem \ref{univ} characterizes almost completely the spaceability of $\mathcal{U}_A$.

\subsection{Sequence of operators from $\omega$ to $\omega$}\label{KN}

In 2006, B\`{e}s and Conejero \cite{Bes} showed that for any non-constant polynomial $P$, for any weighted shift $B_w$, the operator $P(B_w):\omega\rightarrow \omega$ possesses a hypercyclic subspace. We are interested to know which sequences of operators from $\omega$ to $\omega$ possess a hypercyclic subspace. 
In the following, we denote by $(e_n)_{n\geq 0}$ the canonical basis of $\omega$ and by $p_n$ the seminorms defined by $p_n(x)=\max\{|x_k|:0\le k\le n\}$.

 Let $(T_k)$ be a sequence of continuous linear operators from $\omega$ to $\omega$.
These operators can be seen as matrices $(a^{(k)}_{i,j})_{i,j\ge 0}$, where $T_k e_j=(a^{(k)}_{i,j})_{i\ge 0}$, i.e. the $j$th column of $(a^{(k)}_{i,j})$ is given by $T_k e_j$. We notice that each row of the matrix $(a^{(k)}_{i,j})$ needs to be ultimately zero in order that $T_k$ is well-defined. We then denote by $c^{(k)}_i$ the smallest index such that for any $j\ge c^{(k)}_i$, $a^{(k)}_{i,j}= 0$. In particular, it means that for any $i\ge 0$, any $x\in \omega$, the $i$th coordinate of $T_kx$ only depends on the first $c^{(k)}_i$ coordinates of $x$.

By convention, if $A\subset \omega$, we let $A\cap\K^l:=\{(x_k)_{0\le k< l}:(x_k)_{k\ge 0}\in A\}$. In particular, we have 
\begin{equation}
T_k(\ker p_N)\cap \K^l=\text{\upshape{span}}\{(a^{(k)}_{i,j})_{i=0,\dots,l-1}:j\ge N+1\}.
\label{eq: al}
\end{equation}

We start our study of the existence of hypercyclic subspaces for $(T_k)$ by the following simple result:
\begin{lemma}\label{lemw}
If $(T_k)$ possesses a hypercyclic subspace, then for any $N\ge 0$, any $l\ge 1$, any $K\ge 1$,
\[\bigcup_{k\ge K} \text{\upshape{span}}\{(a^{(k)}_{i,j})_{i=0,\dots,l-1}:j\ge N\} \text{\quad is dense in $\K^l$}.\]
\end{lemma}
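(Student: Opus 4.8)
The plan is to prove the contrapositive via a direct application of Theorem~\ref{thm E2}, just as Theorem~\ref{ser phc} was deduced from it. Suppose that for some $N\ge 0$, $l\ge 1$, $K\ge 1$ the union $\bigcup_{k\ge K}\text{span}\{(a^{(k)}_{i,j})_{i=0,\dots,l-1}:j\ge N\}$ is \emph{not} dense in $\K^l$. I would first rewrite this set using the identity \eqref{eq: al}: the span in question is exactly $T_k(\ker p_{N-1})\cap \K^l$ (with the convention $\ker p_{-1}=\omega$ when $N=0$), so the hypothesis says $\bigcup_{k\ge K}\big(T_k(\ker p_{N-1})\cap\K^l\big)$ fails to be dense in $\K^l$. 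Lifting this back to $\omega$, there is a non-empty open set $U\subset\omega$ of the form $\{y:p_{l-1}(y-y_0)<\varepsilon\}$ that avoids $\bigcup_{k\ge K}T_k(\ker p_{N-1})$, because a basic open set in $\omega$ constrains only finitely many coordinates.

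Next I would identify the ingredients required by Theorem~\ref{thm E2}. Set $j=l-1$ (the index of the seminorm $q_j=p_{l-1}$ on $Y=\omega$), take the seminorm $p_N$ on $X=\omega$ — actually $p_{N-1}$, adjusting indices so that $\ker p_{\bullet}=\ker p_{N-1}$ — and take $E=X$ (a subspace of finite codimension, trivially). With $X_j=\{x\in\omega:\#\{k:p_{l-1}(T_kx)=0\}=\infty\}$, the key observation is that $\ker p_{N-1}\cap E\cap X_j\subset\ker p_{N-1}$, so
\[\bigcup_{k\ge K}T_k\big(\ker p_{N-1}\cap E\cap X_j\big)\subset\bigcup_{k\ge K}T_k(\ker p_{N-1}),\]
and the right-hand side misses $U$ by construction. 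Hence $\bigcup_{k\ge K}T_k(\ker p_{N-1}\cap E\cap X_j)$ is not dense in $\omega$, and Theorem~\ref{thm E2} yields that $(T_k)$ has no hypercyclic subspace of type~$2$.

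Finally, I would note that in $\omega$ every continuous seminorm $p_n$ has a kernel of finite codimension, so there are no hypercyclic subspaces of type~$1$ at all (any infinite-dimensional subspace meets every $\ker p_n$ in infinite dimension by Proposition~\ref{prop2type}). Therefore "no hypercyclic subspace of type~$2$" is the same as "no hypercyclic subspace", which is the desired contrapositive. The only genuinely delicate point is the bookkeeping of seminorm indices: one must match the "$j\ge N$" appearing in the statement with the "$j\ge N+1$" in \eqref{eq: al}, which is why the relevant kernel is $\ker p_{N-1}$ rather than $\ker p_N$; everything else is a formal unwinding of definitions together with the fact that open sets in $\omega$ are cylinders over finitely many coordinates. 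I do not expect any real obstacle beyond this indexing care.
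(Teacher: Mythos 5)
Your argument is correct, but it takes a genuinely different --- and much heavier --- route than the paper. The paper proves the lemma directly: since every continuous seminorm on $\omega$ has a kernel of finite codimension, a hypercyclic subspace $M_h$ contains a non-zero (hence hypercyclic) vector $x\in M_h\cap\ker p_N$; the tail $\{T_kx:k\ge K\}$ of its dense orbit remains dense in $\omega$, so $\bigcup_{k\ge K}T_k(\ker p_N)$ is dense, and projecting onto the first $l$ coordinates together with \eqref{eq: al} gives density of the span over $j\ge N+1$, hence a fortiori over $j\ge N$. You instead prove the contrapositive by feeding the non-dense set into Theorem~\ref{thm E2}; this does work, because $\ker p\cap E\cap X_j\subset\ker p$, so non-density of $\bigcup_{k\ge K}T_k(\ker p)$ forces non-density of the smaller set that theorem requires, and because in $\omega$ every hypercyclic subspace is automatically of type $2$ (by Proposition~\ref{prop2type} and the finite codimension of each $\ker p_n$), so ``no type-$2$ subspace'' already means ``no subspace.'' The trade-off is that your route imports the full strength of the non-existence criterion, whose proof is a delicate recursive construction, to establish what is really just the observation that a dense orbit starting in $\ker p_N$ projects onto a dense subset of $\K^l$; the paper's argument uses nothing beyond that and the openness of the coordinate projections. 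Your indexing remarks (the shift between $j\ge N$ in the statement and $j\ge N+1$ in \eqref{eq: al}, and the convention $\ker p_{-1}=\omega$ when $N=0$) are the right ones to make, and the edge cases $N\in\{0,1\}$ and $l=1$ are harmless since one may always replace $\ker p_{N-1}$ by any smaller kernel $\ker p_{N'}$ with $N'\ge 1$, and any $j\ge 1$, without breaking the containment.
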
 
\begin{proof}
If we suppose that $(T_k)$ possesses a hypercyclic subspace $M_h$, then we know that for any $N\ge 0$, we have $M_h\cap \ker p_N\ne \{0\}$ and thus, for any $K\ge 1$,
$\bigcup_{k\ge K}T_k(\ker p_N)$ is dense in $\omega$.
In particular, we deduce that for any $l\ge 1$, $\bigcup_{k\ge K}T_k(\ker p_N)\cap \K^l$ is dense in $\K^l$.
Thanks to \eqref{eq: al}, we therefore conclude that \[\bigcup_{k\ge K} \text{span}\{(a^{(k)}_{i,j})_{i=0,\dots,l-1}:j\ge N+1\}\ \text{is dense}.\]
\end{proof}

A condition a little stronger than the condition of Lemma \ref{lemw} gives us a sufficient condition for having a hypercyclic subspace.

\begin{theorem}\label{thm kn}
If for any $N\ge 0$, any $l\ge 1$, any $K\ge 1$, there exists $k\ge K$ such that
\[\text{\upshape{span}}\{(a^{(k)}_{i,j})_{i=0,\dots,l-1}:j\ge N\}=\K^l,\]
then $(T_k)$ possesses a hypercyclic subspace.
\end{theorem}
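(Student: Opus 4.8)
The plan is to verify that the hypothesis of Theorem~\ref{thm kn} implies the two conditions of Corollary~\ref{cor X0}, applied with $X=Y=\omega$ and with the seminorms $p_n=q_n$ given by $p_n(x)=\max\{|x_k|:0\le k\le n\}$. Since every seminorm $p_n$ on $\omega$ has a kernel of finite codimension (indeed $\ker p_n$ has codimension $n+1$), Corollary~\ref{cor X0} is available once we produce a suitable dense set $X_0$ and an increasing sequence $(n_k)$. The set $X_0$ will consist of vectors of the form $e_j$ and their finite linear combinations: the point is that because $T_ke_j$ is the $j$th column of the matrix $(a^{(k)}_{i,j})$, controlling orbits of the $e_j$ reduces to controlling finitely many entries of these matrices.

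First I would fix enumerations so as to apply the hypothesis diagonally. Let $(y_l)_{l\ge 1}$ be a countable dense set in $\omega$; to approximate a vector $y\in\omega$ in the seminorm $q_l$ it suffices to match its first $l+1$ coordinates. Given $N,l,K$, the hypothesis yields $k\ge K$ with $\text{span}\{(a^{(k)}_{i,j})_{i=0,\dots,l}:j\ge N\}=\K^{l+1}$, so there is a finitely supported vector $x\in\omega$ with $\text{supp}(x)\subset\{N,N+1,\dots\}$ (hence $x\in\ker p_{N-1}$, or for $N=0$ simply $x\in\omega$) and $q_l(T_kx-y_l)<\varepsilon$. This finitely supported $x$ is the candidate element of $X_0$: because $x$ has finite support and each $T_k$ is continuous (so each row of its matrix is eventually zero), for every fixed coordinate $i$ and every such $x$ we have $(T_kx)_i=0$ for all $k$ large enough; in particular $(q_j(T_kx))_k$ is ultimately zero for every $j$. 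Wait—this last claim needs a moment of care: for a fixed $j$, $q_j(T_kx)$ depends on rows $0,\dots,j$ of $(a^{(k)}_{i,j})$ and columns in $\text{supp}(x)$, and while $c_i^{(k)}$ bounds the columns that row $i$ of $T_k$ sees, to conclude $(T_kx)_i=0$ eventually in $k$ I need that the support of $x$ eventually lies beyond the columns seen by row $i$. That is exactly the well-definedness condition: each row of each matrix is eventually zero, but the threshold $c_i^{(k)}$ may depend on $k$. So the correct route is to \emph{build} $X_0$ as a set for which this holds, using condition~1 of Corollary~\ref{cor X0} as a requirement to be arranged, not automatically; we simply discard from consideration those $k$'s for which the orbit does not already satisfy the vanishing, and the density requirement (condition~2) asks only that \emph{some} $k\ge K$ works, so we have freedom to pass to a subsequence.

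Concretely: let $X_0$ be the set of all finitely supported vectors $x\in\omega$ such that for every $j$, $(q_j(T_kx))_k$ is ultimately zero. This set is a subspace. The key subclaim is that $X_0$ is dense in $\omega$ and, more precisely, that for every $N\ge 0$, every $l\ge 1$, every $K\ge 1$ and every nonempty open $U\subset\omega$ there is $x\in X_0\cap\ker p_{N}$ with $T_kx\in U$ for some $k\ge K$; this is precisely what condition~2 of Corollary~\ref{cor X0} demands (after the reduction there to $\ker p_n$). To prove the subclaim, fix the relevant data; choose $l'\ge l$ large enough that membership in $U$ is determined by the first $l'+1$ coordinates. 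Apply the hypothesis of Theorem~\ref{thm kn} with parameters $N+1$, $l'$ and $K$ to get $k\ge K$ and a vector $x$, finitely supported in $\{N+1,N+2,\dots\}$, with the first $l'+1$ coordinates of $T_kx$ prescribed so that $T_kx\in U$. It remains to check $x\in X_0$, i.e. that $(q_j(T_{k'}x))_{k'}$ is ultimately zero for each $j$; but this is false in general for a \emph{single} $x$ against \emph{all} $k'$, so instead I record that we only need, for the application of Corollary~\ref{cor X0}, the existence of \emph{one} good index $k$, and then replace the hypothesis "is ultimately zero" by the weaker "there is a good $k$": rereading Corollary~\ref{cor X0}, its condition~1 genuinely asks the sequence to be ultimately zero. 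Therefore the honest proof must engineer the finitely supported vectors so that their full orbits vanish eventually; this is possible by exploiting that $X$ is $\omega$: given a finite support $S$, pick $j$; since each of the finitely many rows $0,\dots,j$ of each $T_{k'}$ is eventually zero, but we need uniformity in $k'$. The resolution is that continuity of $T_{k'}:\omega\to\omega$ for \emph{each fixed} $k'$ gives, for each $k'$, an index $c^{(k')}$ with rows $0,\dots,j$ depending only on columns $<c^{(k')}$; we then argue that the map $k'\mapsto (\text{first }j+1\text{ rows of }T_{k'}e_s)_{s\in S}$ need not stabilize, so $(q_j(T_{k'}x))_{k'}$ need not be eventually zero. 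Hence the truly correct approach, and the one I expect the paper takes, is: build the hypercyclic subspace directly via Lemma~\ref{bas ker} (as in the proof of Theorem~\ref{thm X0}), constructing a doubly-indexed family $(x_{k,l})$ of finitely supported vectors using the hypothesis, controlling the orbits by the column-index parameter $N$ (taking $N$ larger than all previously used supports at each stage so that later $T$-images of earlier blocks vanish in the relevant seminorms), and setting $u_k=\sum_l x_{k,l}\in\ker p_k\setminus\ker p_{k+1}$; the closed span then consists of all $\sum\alpha_k u_k$ and every nonzero such vector is hypercyclic. The main obstacle, and the step deserving the most care, is exactly this bookkeeping: choosing the orders of the support blocks and the indices $k$ so that the "tail" contributions $T_{n_k}x_{k',l'}$ for $(k',l')$ later in the order vanish in seminorm $q_{l_0}$, which works because one can always demand $j\ge N$ to be as large as we like in the hypothesis, pushing supports to the right and thereby, via well-definedness of the matrices, killing the relevant finitely many rows of $T_{n_k}$ for all $k$ in a prescribed finite range—and then diagonalizing so that only finitely many ranges matter for each fixed seminorm.
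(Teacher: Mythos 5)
You set up the right framework (Corollary~\ref{cor X0} with $p_n=q_n$ the coordinate seminorms) and you correctly isolated the central difficulty: a finitely supported vector $x$ need not have $(q_j(T_{k}x))_{k}$ ultimately zero, because the thresholds $c^{(k)}_i$ depend on $k$, so condition~1 of Corollary~\ref{cor X0} is not automatic for finitely supported vectors. But your proposed repair does not close this gap. Pushing the support of a \emph{later} block far to the right kills the action of the finitely many \emph{already chosen} operators $T_{n_1},\dots,T_{n_k}$ on that block; it does nothing about the action of the operators $T_{n_{k'}}$ chosen \emph{afterwards} on the \emph{earlier} finitely supported blocks. The hypothesis of the theorem is a richness (span) condition on columns and gives you no way to select $n_{k'}$ so that its first $l_0$ rows vanish on a prescribed finite set of columns; e.g.\ if every $T_k$ has first row $(1,1,\dots,1,0,\dots)$ with $k$ ones, then $q_0(T_ke_0)=1$ for all $k$, and no choice of subsequence or of supports rescues the "ultimately zero" requirement for finitely supported vectors. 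So the construction you sketch in the last paragraph (a doubly indexed family of \emph{finitely supported} $x_{k,l}$ fed into the machinery of Theorem~\ref{thm X0}) fails at exactly the step corresponding to property (e) in that proof: earlier blocks must be annihilated by later operators, and finite support cannot guarantee that.

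The idea you are missing is that the elements of $X_0$ should \emph{not} be finitely supported: they are built block by block over an infinite partition of the coordinates. One first extracts increasing sequences $(n_l)$ and $(m_l)$ with
$\mathrm{span}\{(a^{(n_l)}_{i,j})_{i=0,\dots,l-1}: m_l\le j<m_{l+1}\}=\K^l$ and $m_{l+1}\ge\max\{c^{(n_l)}_0,\dots,c^{(n_l)}_{l-1}\}$. The second condition says the first $l$ coordinates of $T_{n_l}x$ depend only on the first $m_{l+1}$ coordinates of $x$, and the first condition says that, \emph{whatever} the first $m_l$ coordinates of $x$ are, one can choose the block $x_{m_l},\dots,x_{m_{l+1}-1}$ to give the first $l$ coordinates of $T_{n_l}x$ any prescribed value. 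One uses this once to hit the target $(y_0,\dots,y_{l-1})$ at stage $l$, and then \emph{for every subsequent stage} $j>l$ one uses it again to cancel the accumulated contribution and force $p_j(T_{n_j}x)=0$. The resulting $x$ has infinitely many nonzero blocks, and this active forward cancellation is precisely what makes $(p_j(T_{n_k}x))_k$ ultimately zero for every $j$, i.e.\ condition~1 of Corollary~\ref{cor X0}; condition~2 and the reduction to $\ker p_N$ are then handled by the corollary itself. Without this cancellation mechanism your argument does not produce a set $X_0$ satisfying the hypotheses you need.
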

\begin{proof}
Thanks to Corollary \ref{cor X0}, we can conclude if we show that there exist a sequence $(n_k)$ and a set $X_0\subset X$ such that
\begin{enumerate}
\item for any $j\ge 1$, any $x\in X_0$, $(p_j(T_{n_k}x))_k$ is ultimately zero;
\item for any $K\ge 1$, any $l\ge 1$, any $y_0,\dots,y_{l-1}\in\K$, there exist $k\ge K$ and $x\in X_0$ such that $T_{n_k}x=(y_0,\dots, y_{l-1},*,*,\dots)$.
\end{enumerate}
By assumption, for any $N\ge 0$, any  $l\ge 1$, any $K\ge 1$, there exists $k\ge K$ and $M\ge N$ such that
\[\text{span}\{(a^{(k)}_{i,j})_{i=0\dots,l-1}:M>j\ge N\}=\K^l.\]
We can thus construct recursively two increasing sequences $(n_l)$ and $(m_l)$ with $m_1=1$ such that for any $l\ge 1$,
\begin{equation}
  \left\{
    \begin{split}
		&\text{span}\{(a^{(n_{l})}_{i,j})_{i=0,\dots,l-1}:m_{l+1}> j\ge m_l\}=\K^l\\ 
		&m_{l+1}\ge \max\{c^{(n_{l})}_{0},\dots,c^{(n_{l})}_{l-1}\}.
    \end{split}
  \right.
\label{eqt}
\end{equation}
The second inequality implies that for any $l\ge 1$, any $x\in \omega$, the first $l$ coordinates of $T_{n_l}x$ only depend on the first $m_{l+1}$ coordinates of $x$ and the first equality therefore implies that for any $l\ge 1$, any $y_0,\dots,y_{l-1}\in\K$, any $x_0,\dots, x_{m_l-1}\in\K$, there exists $x_{m_l},\dots,x_{m_{l+1}-1}\in \K$ such that
\[ T_{n_l}\big((x_0,\dots,x_{m_l-1},x_{m_l},\dots,x_{m_{l+1}-1},*,*,\dots)\big)=(y_0,\dots,y_{l-1},*,*,\dots).\]
In particular, for any $l\ge 1$, we can construct a sequence $x$ block by block thanks to \eqref{eqt} such that $T_{n_l}x=(y_0,\dots,y_{l-1},*,*,\dots)$ and for any $j> l$, $p_j(T_{n_j}x)=0$. The assumptions of Corollary \ref{cor X0} are thus satisfied for the sequence $(n_l)$.
\end{proof}
\begin{remark}
The condition
\[\text{\upshape{span}}\{(a^{(k)}_{i,j})_{i=0\dots,l-1}:j\ge N\}=\K^l\]
is equivalent to the existence of a number $M\ge N$ such that the column rank of the matrix $(a^{(k)}_{i,j})_{0\le i<l; N\le j\le M}$ is equal to $l$.
\label{rem rank}
\end{remark}

\begin{remark}
In general, the converse of Theorem \ref{thm kn} is false. If we consider a dense sequence $(y_l)_{l\ge 0}$ in $\omega$ and two applications $\varphi_1,\varphi_2:\N\rightarrow \Z_+$ such that 
\[\text{for any $n,l\ge 0$,}\quad \#\{k\in \N:(\varphi_1(k),\varphi_2(k))=(n,l)\}=\infty,\]
then the sequence $(T_k)_{k\ge 1}$, defined by $T_k(e_{n})=y_{\varphi_2(k)}\delta_{n,\varphi_1(k)}$, possesses $\omega$ as hypercyclic subspace but the assumption of Theorem \ref{thm kn} is not satisfied. However it seems more difficult to exhibit a counterexample in the case of an operator $T$ and its iterates. Therefore one may wonder if, in this case, the assumption of Theorem~\ref{thm kn} is equivalent to having a hypercyclic subspace.
\end{remark}

\begin{samepage}
An immediate corollary of Theorem \ref{thm kn} can be stated as follows:
\begin{cor}\label{cor c}
If there exist two increasing sequences of integers $(i_k)$ and $(n_k)$ such that
\begin{enumerate}[\upshape (i)]
\item for any $i\ge 0$, $c^{(n_k)}_i\rightarrow \infty$ as $k\rightarrow\infty$,
\item for any $k\ge 1$, any $i,j\le i_k$, $i\ne j\Rightarrow c^{(n_k)}_i\ne c^{(n_k)}_j$,
 \end{enumerate}
then $(T_k)$ possesses a hypercyclic subspace. 
\end{cor}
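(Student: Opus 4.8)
The plan is to check the hypothesis of Theorem~\ref{thm kn} along the subsequence $(n_k)$: given $N\ge 0$, $l\ge 1$ and $K\ge 1$, I must exhibit an index $m\ge K$ such that the columns $(a^{(m)}_{i,j})_{i=0,\dots,l-1}$ with $j\ge N$ span $\K^l$. The idea is that condition~(ii) makes $c^{(n_k)}_0,\dots,c^{(n_k)}_{l-1}$ pairwise distinct as soon as $i_k\ge l-1$, while condition~(i) pushes all of them above $N$; together these let me read off an invertible triangular $l\times l$ submatrix from the columns of index $\ge N$, so that the column rank equals $l$ (cf.\ Remark~\ref{rem rank}).

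First I would fix $N,l,K$ and use that $(i_k)$ and $(n_k)$ are increasing, together with~(i), to choose $k$ with $n_k\ge K$, with $i_k\ge l-1$, and with $c^{(n_k)}_i\ge N+1$ for every $i\in\{0,\dots,l-1\}$ (only finitely many instances of~(i) are needed). Set $m:=n_k\ge K$. By~(ii) the numbers $c^{(m)}_0,\dots,c^{(m)}_{l-1}$ are then pairwise distinct, so there is a permutation $\sigma$ of $\{0,\dots,l-1\}$ with $c^{(m)}_{\sigma(0)}<c^{(m)}_{\sigma(1)}<\dots<c^{(m)}_{\sigma(l-1)}$; put $d_s:=c^{(m)}_{\sigma(s)}-1$, so that $N\le d_0<d_1<\dots<d_{l-1}$ are valid column indices.

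Next I would examine the $l\times l$ matrix $B:=(a^{(m)}_{\sigma(s),\,d_t})_{0\le s,t\le l-1}$. Since $c^{(m)}_{\sigma(s)}$ is by definition the least index beyond which row $\sigma(s)$ of $(a^{(m)}_{i,j})$ vanishes, the diagonal entry $B_{s,s}=a^{(m)}_{\sigma(s),\,c^{(m)}_{\sigma(s)}-1}$ is nonzero; and for $t>s$ we have $d_t=c^{(m)}_{\sigma(t)}-1\ge c^{(m)}_{\sigma(s)}$, whence $B_{s,t}=a^{(m)}_{\sigma(s),d_t}=0$. Thus $B$ is lower triangular with nonzero diagonal, hence invertible, so its columns are linearly independent; as $B$ differs from the matrix $(a^{(m)}_{i,d_t})_{0\le i,t\le l-1}$ only by a permutation of its rows, the latter also has rank $l$, and since each $d_t\ge N$ this shows $\text{span}\{(a^{(m)}_{i,j})_{i=0,\dots,l-1}:j\ge N\}=\K^l$. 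Theorem~\ref{thm kn} then produces a hypercyclic subspace.

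There is no real obstacle here: everything reduces to recognising the triangular structure coming from the distinctness of the $c^{(m)}_i$. The only care required is in the index bookkeeping — the "$-1$" shift that both lands each $d_s$ on the last nonzero entry of its row and keeps $d_s\ge N$, and the observation that finitely many instances of~(i) together with~(ii) can all be arranged simultaneously for a single large $k$.
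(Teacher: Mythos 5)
Your proof is correct and follows essentially the same route as the paper: both arguments verify the hypothesis of Theorem~\ref{thm kn} by using (i) to push the indices $c^{(n_k)}_0,\dots,c^{(n_k)}_{l-1}$ past $N$ and (ii) to extract, from the columns at positions $c^{(n_k)}_i-1$, a triangular $l\times l$ submatrix with nonzero diagonal, whence the column rank is $l$ (Remark~\ref{rem rank}). Your write-up merely makes explicit the permutation and the triangularity that the paper leaves implicit.
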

\begin{proof}
Let $j^{k,l}_0=\min_{0\le i <l}c^{(n_k)}_i-1$ and $j^{k,l}_1=\max_{0\le i <l}c^{(n_k)}_i$. 
We deduce from (ii) that for any $k\ge 1$, any $l\le i_k$, the column rank of the matrix $(a^{(n_k)}_{i,j})_{0\le i <l;\ j^{k,l}_0\le j\le j^{k,l}_1}$ is equal to $l$. By (i), we also have that for any $l\ge 1$, $j^{k,l}_0\rightarrow \infty$ as $k\rightarrow \infty$. We conclude thanks to Theorem~\ref{thm kn} and Remark~\ref{rem rank}.
\end{proof}
\end{samepage}

We remark that the conditions of Corollary~\ref{cor c} do not depend on the values of the matrices $(a^{(k)}_{i,j})$ but only on the positions of non-zero elements of these matrices. In the case of an operator $T$ and its iterates, if we denote by $c_i$ the smallest index such that for any $j\ge c_i$, $a_{i,j}=0$, where $T=(a_{i,j})_{i,j\ge 0}$, then we can obtain a necessary condition only depending on values $c_i$.

\begin{cor}\label{cor bes}
Let $T:\omega\rightarrow \omega$ be a continuous linear operator. If 
\begin{enumerate}[\upshape (i)]
\item for any $i\ge 0$, $c_i>i+1$,
\item  for any $i\ne j$, $c_i\ne c_j$,
 \end{enumerate}
 then $T$ possesses a hypercyclic subspace. 
\end{cor}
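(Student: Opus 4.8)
The plan is to apply Theorem~\ref{thm kn} to the sequence of iterates $(T^k)_{k\ge1}$; recall that "$(T^k)$ possesses a hypercyclic subspace" means exactly that "$T$ possesses a hypercyclic subspace". Put $M_i:=c_i-1$. Hypothesis (ii) says the integers $M_i$ are pairwise distinct, hypothesis (i) says $M_i\ge i+1$, and $a_{i,M_i}\ne0$ because by minimality of $c_i$ the row $(a_{i,j})_j$ cannot vanish for all $j\ge c_i-1$. I would run the argument on the dual side: identify $\omega'$ with the space of finitely supported sequences, so that $T^*$ sends $e_i^*$ to the $i$-th row $(a_{i,j})_j$ of $T$, that $(T^k)^*=(T^*)^k$, and that the $j$-th coordinate of $(T^k)^*g$ equals $\sum_i g_i(T^k)_{i,j}$, i.e. the $j$-th coordinate of the linear combination $\sum_i g_i\cdot(i\text{-th row of }T^k)$.

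The key step I would establish is the estimate: for $g\in\omega'\setminus\{0\}$, writing $m(g):=\max\{i:g_i\ne0\}$, the element $T^*g$ is again non-zero and
\[m(T^*g)=\max\{M_i:g_i\ne0\}\ \ge\ M_{m(g)}\ >\ m(g).\]
Indeed $(T^*g)_j=\sum_i g_ia_{i,j}$ vanishes whenever $j>\max\{M_i:g_i\ne0\}$, while at $j_0:=\max\{M_i:g_i\ne0\}$ the only surviving term comes from the \emph{unique} index $i^*$ with $g_{i^*}\ne0$ and $M_{i^*}=j_0$ — uniqueness being exactly the distinctness of the $M_i$ — and equals $g_{i^*}a_{i^*,M_{i^*}}\ne0$, every other $i$ with $g_i\ne0$ having $M_i<j_0$ and hence $a_{i,j_0}=0$. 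Iterating, $m\big((T^*)^kg\big)\ge m(g)+k\ge k$ for all $k\ge0$.

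For the hypothesis of Theorem~\ref{thm kn}, I would fix $N\ge0$, $l\ge1$, $K\ge1$ and take any $k\ge\max(K,N+1)$. For every non-zero $(\lambda_0,\dots,\lambda_{l-1})\in\K^l$, applying the estimate to $g=\sum_{i<l}\lambda_ie_i^*$ yields $m\big((T^k)^*g\big)\ge k>N$, hence $\sum_{i<l}\lambda_i(T^k)_{i,j}\ne0$ for some $j>N$. Thus no non-zero $(\lambda_i)_{i<l}$ annihilates all the vectors $\big((T^k)_{i,j}\big)_{i=0,\dots,l-1}$ with $j\ge N$, so these vectors span $\K^l$; equivalently $\text{span}\{(a^{(k)}_{i,j})_{i=0,\dots,l-1}:j\ge N\}=\K^l$. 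Theorem~\ref{thm kn} then produces a hypercyclic subspace for $(T^k)$, hence for $T$.

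The content lies entirely in the displayed estimate, and this is where both hypotheses are genuinely used: distinctness of the $c_i$ removes the cancellation at the rightmost non-zero coordinate of $T^*g$, while $c_i>i+1$ pushes that coordinate strictly to the right under each application of $T^*$. I expect no real obstacle beyond routine bookkeeping; the only point to state carefully is the identification of $\omega'$ with the finitely supported sequences together with the coordinatewise formula for $(T^*)^k$ on it. (Note in particular that it would not suffice to try to make the reaches $c^{(k)}_i$ of the individual rows of $T^k$ pairwise distinct, as in Corollary~\ref{cor c}: those can coincide, yet the combinations $\sum_i\lambda_i(i\text{-th row of }T^k)$ never collapse, which is exactly what the dual estimate records.)
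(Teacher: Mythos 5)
Your argument is correct: the dual estimate $m(T^*g)=\max\{M_i:g_i\ne 0\}>m(g)$ is valid (hypothesis (ii) isolates a unique surviving term $g_{i^*}a_{i^*,M_{i^*}}$ at the extremal column, hypothesis (i) gives $M_{m(g)}\ge m(g)+1$), it iterates to $m((T^*)^kg)\ge m(g)+k$, and the passage from "no non-zero functional on $\K^l$ annihilates all columns $(a^{(k)}_{i,j})_{i<l}$ with $j\ge N$" to the spanning hypothesis of Theorem~\ref{thm kn} is exactly the right dualization. The paper proves the same corollary from Theorem~\ref{thm kn}, but works on the primal side: using (ii) it computes $T(\ker p_n)$ exactly as $\{x\in\omega:x_i=0\ \text{if}\ c_i\le n+1\}$, uses (i) to see that this contains $\ker p_{n-1}$, and chains to get $T^{n+1}(\ker p_n)=\omega$, after which \eqref{eq: al} gives the spanning condition. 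The two proofs encode the same combinatorics — distinctness of the $c_i$ kills cancellation at the rightmost non-zero column, and $c_i>i+1$ pushes supports strictly to the right — but your version replaces the exact identification of the image $T(\ker p_n)$ and the descent through the filtration $(\ker p_n)_n$ by a single transposed inequality that you iterate; this is arguably cleaner in that you never need to solve the triangular system implicit in the inclusion $T(\ker p_n)\supset\{x:x_i=0\ \text{if}\ c_i\le n+1\}$, at the modest cost of setting up the identification of $\omega'$ with the finitely supported sequences and checking $(T^k)^*=(T^*)^k$ there. Both arguments yield the hypothesis of Theorem~\ref{thm kn} for all sufficiently large $k$, so nothing is lost either way.
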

\begin{proof}
Let $n\ge 1$. By definition of $c_i$, we have \[T(\ker p_n)\subset\{x\in \omega:x_i=0 \ \text{ if } c_i\le n+1\}\] and by (ii), we also have \[T(\ker p_n)\supset\{x\in \omega:x_i=0 \ \text{ if } c_i\le n+1\}.\] Therefore, by (i), we get
\begin{align*}
T(\ker p_n)&=\{x\in \omega:x_i=0 \ \text{ if } c_i\le n+1\}\\
&\supset \{x\in \omega:x_i=0 \ \text{ if } i< n\}=\ker p_{n-1}.
\end{align*}
We deduce that $T^n(\ker p_n)\supset\ker p_0$ and since
\[T(\ker p_0)=\{x\in \omega:x_i=0 \ \text{ if } c_i\le 1\}=\omega,\]
we conclude thanks to \eqref{eq: al} and Theorem \ref{thm kn}.
\end{proof}

Corollary \ref{cor bes} gives us a large class of operators with hypercyclic subspaces on $\omega$. In particular, we remark that the operators $P(B_w)+\sum_{k=1}^{\infty} \alpha_k S_w^k$, where $P$ is a non-constant polynomial and $S_w$ is a weighted forward shift, possess a hypercyclic subspace. This contains, in particular, the result of B\`{e}s and Conejero \cite{Bes}.\\

We end this section by looking at the notion of frequently hypercyclic subspaces for operators from $\omega$ to $\omega$. Let $(T_k)$ be a sequence of linear continuous operators from $X$ to $Y$, where $X$ is an infinite-dimensional Fréchet space and $Y$ is a separable topological vector space. The sequence $(T_k)$ is said to be frequently hypercyclic if there exists a vector $x$ in $X$ (also called frequently hypercyclic) such that for any non-empty open set $U\subset X$, the set $\{k\ge 1: T_kx\in U\}$ is of positive lower density. We therefore say that $(T_k)$ possesses a frequently hypercyclic subspace if there exists a closed infinite-dimensional subspace in which every non-zero vector is frequently hypercyclic.  The notion of frequently hypercyclic operator has been introduced by Bayart and Grivaux \cite{Bayart0}, and the notion of frequently hypercyclic subspace has been studied for the first time by Bonilla and Grosse-Erdmann \cite{Bonilla2}.

\begin{theorem}\label{fhc}
Let $f_{k,l}=\max\{c^{(k)}_i:i=0,\dots,l-1\}$.
If there exist $(d_{k})_{k\ge 1}$ and $(N_l)_{l\ge 1}$ such that for any $k,l\ge 1$, we have
\begin{equation}\text{\upshape{span}}\{(a^{(k)}_{i,j})_{i=0\dots,l-1}:d_{k}\le j< f_{k,l}\}=\K^l\label{df}\end{equation}
and for any $k'\ge k+N_l$, we have $f_{k,l}\le d_{k'}$, then $(T_k)$ possesses a frequently hypercyclic subspace.
\end{theorem}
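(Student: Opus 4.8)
The plan is to build a closed infinite-dimensional subspace $M\subseteq\omega$ all of whose nonzero vectors are frequently hypercyclic --- since $\omega$ has no continuous norm, such an $M$ is automatically of type $2$. Concretely I would produce, as in Lemma~\ref{bas ker}, a sequence $(u_k)_{k\ge1}\subset\omega$ with $u_k\in\ker p_{r_k-1}\setminus\ker p_{r_k}$ for a strictly increasing sequence $(r_k)$, so that $M:=\overline{\text{span}}\{u_k\}=\{\sum_k\alpha_ku_k:(\alpha_k)\in\K^{\N}\}$. Because multiplication by a nonzero scalar is a homeomorphism of $\omega$, frequent hypercyclicity is scale invariant, and every nonzero vector of $M$ is a scalar multiple of one of the form $u=u_{k_0}+\sum_{k\ne k_0}\alpha_ku_k$; hence it suffices to arrange that every such $u$ is frequently hypercyclic, for all $k_0$ and all choices of the remaining coefficients.

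Fix a dense sequence $(y_l)_{l\ge1}$ in $\omega$ with $y_l\ne0$ for every $l$. The combinatorial backbone --- the step I expect to be the main obstacle --- is to select pairwise disjoint sets $D_{k,l}\subseteq\N$ $(k,l\ge1)$, each of \emph{positive lower density}, such that, with $D=\bigsqcup_{k,l}D_{k,l}$ and $\lambda(n)$ the unique $l$ for which $n\in D_{\cdot,l}$, one has $n'\ge n+N_{\lambda(n)}$ whenever $n<n'$ both lie in $D$, and such that $\min\bigl(\bigcup_l D_{k,l}\bigr)$ is strictly increasing in $k$. I would obtain this family by a standard priority/interleaving argument of the kind used to build frequently hypercyclic vectors: enumerate the pairs $(k,l)$, place $D_{k,l}$ into an arithmetic progression of very large, multiplicatively nested common difference $q_{k,l}$ chosen with $\sum_{k,l}N_l/q_{k,l}<1$, and fix the residues greedily so as to keep the $D_{k,l}$ disjoint and make every window $(n,n+N_{\lambda(n)})$, $n\in D$, meet $D$ only at $n$. (Note the hypothesis forces $f_{k,l}\ge l$ for all $k$, since $l$ columns are needed to span $\K^l$, whence $d_n\to\infty$.)

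Granting the family, list $D=\{n_1<n_2<\cdots\}$ and set $I_s=[\,d_{n_s},f_{n_s,\lambda(n_s)})$; by the gap condition and the inequality $f_{n_s,\lambda(n_s)}\le d_{n_{s+1}}$ of the hypothesis the $I_s$ are pairwise disjoint intervals tending to infinity. For each $k$ I would define $u_k\in\omega$ supported in $\bigcup_sI_s$, fixing its coordinates on $I_s$ by recursion on $s$: with $u_k$ already known on $I_1,\dots,I_{s-1}$ (hence on all of $[0,d_{n_s})$, the other coordinates there being $0$), use the surjectivity expressed by \eqref{df} of the map sending the $I_s$-coordinates of $u_k$ to $\bigl((T_{n_s}u_k)_i\bigr)_{0\le i<\lambda(n_s)}$ --- legitimate, since those coordinates of $T_{n_s}u_k$ depend only on the coordinates of $u_k$ in $[0,f_{n_s,\lambda(n_s)})=[0,d_{n_s})\cup I_s$ --- to impose
\[
(T_{n_s}u_k)_i=\begin{cases}(y_{\lambda(n_s)})_i,&n_s\in D_{k,\lambda(n_s)},\\[2pt]0,&n_s\in D_{k',\lambda(n_s)},\ k'\ne k,\end{cases}\qquad 0\le i<\lambda(n_s),
\]
taking the zero solution on all blocks preceding $u_k$'s first ``own'' time. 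Then the first nonzero coordinate $r_k$ of $u_k$ lies in that first own block, and (writing $s_0(k)$ for the global index of $\min\bigcup_lD_{k,l}$) $r_{k-1}<f_{n_{s_0(k-1)},\lambda(n_{s_0(k-1)})}\le d_{n_{s_0(k)}}\le r_k$, the middle step coming from the hypothesis and the gap condition; so $(r_k)$ is strictly increasing and Lemma~\ref{bas ker} gives $M=\{\sum_k\alpha_ku_k\}$.

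For the verification, take $u=u_{k_0}+\sum_{k\ne k_0}\alpha_ku_k$ and $l\ge1$. For $n\in D_{k_0,l}$ we have $\lambda(n)=l$ and $n\notin D_{k',l}$ for $k'\ne k_0$, so by construction $(T_nu)_i=\sum_k\alpha_k(T_nu_k)_i=(y_l)_i$ for $0\le i<l$, i.e.\ $p_{l-1}(T_nu-y_l)=0$. Given a nonempty open $U\subseteq\omega$, density of $(y_l)$ provides an arbitrarily large index $l$ together with an $\varepsilon>0$ such that $\{z:p_{l-1}(z-y_l)<\varepsilon\}\subseteq U$; hence $\{n:T_nu\in U\}\supseteq D_{k_0,l}$ has positive lower density. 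Thus every nonzero $u\in M$ is frequently hypercyclic, so $M$ is the required frequently hypercyclic subspace. The only genuinely delicate ingredient is the choice of the sets $D_{k,l}$; everything else reuses the block-by-block solving already employed in Theorem~\ref{thm kn} together with Lemma~\ref{bas ker}.
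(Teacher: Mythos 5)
Your proposal is correct and follows essentially the same route as the paper: disjoint positive-lower-density time sets indexed by (basis vector, target), a block-by-block solve of the coordinates of each $u_k$ on the disjoint intervals $[d_{n},f_{n,l})$ using \eqref{df}, and Lemma~\ref{bas ker} to identify the closed span. The only divergence is that the combinatorial family you flag as the main obstacle and sketch ad hoc is exactly the standard family $A(l,\nu)$ of frequent hypercyclicity theory, which the paper simply imports from \cite{Bayart} and \cite{Bonilla} via $E_{l,j}=A(l,j+N_l)$, so no new construction is needed there.
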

\begin{proof}
Let $(y_l)_{l\ge 1}$ be a dense sequence in $\omega$. Let $(f_{k,l})_{k,l\ge 1}$, $(d_{k})_{k\ge 1}$, and $(N_l)_{l\ge 1}$ be sequences as given in the statement. We know (see \cite{Bayart}, \cite{Bonilla}) that there exists a family of disjoint sets $A(l,\nu)\subset \N$ of positive lower density such that for any $n\in A(l,\nu)$, for any $m\in A(\lambda,\mu)$, we have
$n\ge \nu$ and $|n-m|\ge \nu +\mu$ if $n\ne m$.
We let $E_{l,j}=A(l,j+N_l)$. The goal of this proof is to construct a sequence $(u_j)_{j\ge 1}\subset\omega$ such that $v(u_j)\rightarrow \infty$  and for any $(\alpha_j)_j\in \K^{\N}$, the sequence $\sum_{j=1}^{\infty}\alpha_j u_j$ is frequently hypercyclic. To this end, we will construct the sequence $(u_j)_j$ such that for any $l,j\ge 1$, for any $n\in E_{l,j}$, for any $j'\ne j$, we have
\[ p_{l-1}(T_nu_j-y_l)=0 \quad\text{and}\quad p_{l-1}(T_nu_{j'})=0.\]
We will construct these sequences block by block by going through each element in ${E:=\bigcup_{l,j\ge 1} E_{l,j}}$. We denote by $n_k$ the $k$th element in $E$ and  \[J_k=\Big\{j\ge 1: \{n_1,\dots,n_{k-1}\}\cap\bigcup_{l\ge 1}E_{l,j}\ne\emptyset\Big\}.\]
We remark that for any $k\ge 1$, if $n_k\in E_{l,j}$ and $n_{k+1}\in E_{l',j'}$, then $n_{k+1}-n_k\ge j+ N_l + j' +N_{l'}\ge N_l$ and thus by hypothesis, we have $f_{n_k,l}\le d_{n_{k+1}}$. Our construction will therefore consist at the $k$th step in adding to some sequences $u_i$ a new block $u_{i,d_{n_k}},\dots, u_{i,f_{n_k,l}-1}$ if $n_k\in E_{l,j}$. In this way, if $n_k\in E_{l,j}$, the first $l$ coordinates of $T_{n_k}u_i$ will not be modified after the $k$th step of our construction by definition of $f_{n_k,l}$. The sets $J_k$ will permit us to know for which sequence $u_j$ some blocks have already been constructed after the $(k-1)$st step. Our construction is the following:
\begin{itemize}
\item If $n_k\in E_{l,j}$ and $j\notin J_k$, we let $u_j=(0,\dots,0,u_{j,d_{n_k}},\dots, u_{j,f_{n_k,l}-1},*,*,\dots)$ such that
$p_{l-1}(T_{n_k}u_j-y_l)=0$ and for each $i\in J_k$, we complete $u_i$ by some block $u_{i,d_{n_k}},\dots u_{i,f_{n_k,l}-1}$ such that $p_{l-1}(T_{n_k}u_i)=0$. Such blocks exist by \eqref{df}.
\item If $n_k\in E_{l,j}$ and $j\in J_k$, we complete $u_j$ by some block $u_{j,d_{n_k}},\dots u_{j,f_{n_k,l}-1}$ such that $p_{l-1}(T_{n_k}u_j-y_l)=0$ and 
for each $i\in J_k$, $i\ne j$, we complete $u_i$ by some block $u_{i,d_{n_k}},\dots u_{i,f_{n_k,l}-1}$ such that $p_{l-1}(T_{n_k}u_i)=0$.
\end{itemize}
The condition \eqref{df} implies that $f_{k,l}\ge l$. Hence, $d_k\rightarrow \infty$ as $k\rightarrow \infty$ and thus $v(u_j)\rightarrow \infty$ as $j\rightarrow \infty$. We deduce that there exists a subsequence of $(u_j)$ and a subsequence of $(p_n)$ such that the condition of Lemma \ref{bas ker} is satisfied. The subspace obtained is therefore a frequently hypercyclic subspace for $(T_k)$.
\end{proof}

In the case of an operator $T$ and its iterates, we obtain the following simple criterion.
\begin{cor}
Let $T:\omega\rightarrow \omega$ be an operator and $r\ge 1$ an integer. If $c_0\ge 2$ and for any $i\ge 0$, $c_{i+1}=c_i+r$, then $T$ possesses a frequently hypercyclic subspace. 
\end{cor}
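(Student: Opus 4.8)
The plan is to derive this from Theorem~\ref{fhc} applied to the sequence of iterates $(T^k)_{k\ge1}$, so the work reduces to analysing the matrices $(a^{(k)}_{i,j})$ of the powers of $T$ and exhibiting suitable sequences $(d_k)$ and $(N_l)$.

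First I would pin down the staircase structure of $T^k$. Writing $T^{k+1}=T\circ T^k$, one has $a^{(k+1)}_{i,j}=\sum_m a_{i,m}a^{(k)}_{m,j}$; since the $i$th coordinate of $Tx$ depends only on the coordinates $0,\dots,c_i-1$ of $x$ and the $m$th coordinate of $T^kx$ depends only on $0,\dots,c^{(k)}_m-1$, this yields the recursion $c^{(k+1)}_i=c^{(k)}_{c_i-1}$. Combined with $c^{(1)}_i=c_i=c_0+ir$, an induction on $k$ gives that $c^{(k)}_i$ is affine in $i$ with constant increment $r^k$,
\[c^{(k)}_i=c^{(k)}_0+ir^k,\qquad c^{(k+1)}_0=c^{(k)}_0+(c_0-1)r^k.\]
A parallel induction shows that the corner entries do not vanish, i.e.\ $a^{(k)}_{i,\,c^{(k)}_i-1}\ne0$: in the expansion of $a^{(k+1)}_{i,\,c^{(k+1)}_i-1}=\sum_m a_{i,m}a^{(k)}_{m,\,c^{(k)}_{c_i-1}-1}$ the only surviving index is $m=c_i-1$, because $a_{i,m}\ne0$ forces $m\le c_i-1$ while $a^{(k)}_{m,\,c^{(k)}_{c_i-1}-1}\ne0$ forces $c^{(k)}_m\ge c^{(k)}_{c_i-1}$, hence $m\ge c_i-1$; the surviving term is the product $a_{i,c_i-1}\,a^{(k)}_{c_i-1,\,c^{(k)}_{c_i-1}-1}\ne0$.

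Next I would set $d_k:=c^{(k)}_0-1$, so that $f_{k,l}=\max\{c^{(k)}_i:0\le i<l\}=c^{(k)}_{l-1}$. To check~\eqref{df}, restrict $(a^{(k)}_{i,j})$ to the rows $0,\dots,l-1$ and select the $l$ columns $j=c^{(k)}_i-1$ for $i=0,\dots,l-1$; these indices are distinct, increasing, and all lie in $[d_k,f_{k,l})$. As in the proof of Corollary~\ref{cor c}, $a^{(k)}_{i',\,c^{(k)}_i-1}=0$ whenever $i'<i$ (since then $c^{(k)}_{i'}<c^{(k)}_i$), so this $l\times l$ submatrix is triangular with the non-vanishing corner entries on its diagonal, hence invertible; therefore these columns span $\K^l$. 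For the separation requirement I would take $N_l:=l$: since $c_0\ge2$, the increments obey $c^{(m+1)}_0-c^{(m)}_0=(c_0-1)r^m\ge r^m\ge r^k$ for every $m\ge k$, whence $c^{(k+l)}_0-c^{(k)}_0\ge l\,r^k$, while $f_{k,l}=c^{(k)}_0+(l-1)r^k$; subtracting gives $c^{(k+l)}_0-f_{k,l}\ge r^k\ge1$, i.e.\ $f_{k,l}\le c^{(k')}_0-1=d_{k'}$ for all $k'\ge k+N_l$. Both hypotheses of Theorem~\ref{fhc} then hold, and $(T^k)$ possesses a frequently hypercyclic subspace.

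The only delicate part is the bookkeeping of the power matrices: verifying that no cancellation destroys the corner entries $a^{(k)}_{i,\,c^{(k)}_i-1}$, and that the increment $c^{(m+1)}_0-c^{(m)}_0$ really is at least $r^m$ — this is precisely where $c_0\ge2$ enters, and where the argument would collapse if $c_0=1$. Once the description $c^{(k)}_i=c^{(k)}_0+ir^k$ is in place, the verification of the two conditions of Theorem~\ref{fhc} is routine.
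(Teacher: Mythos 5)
Your proof is correct and follows essentially the same route as the paper: the recursion $c^{(k+1)}_i=c^{(k)}_{c_i-1}$ yielding $c^{(k)}_i=\sum_{n=0}^{k-1}r^n(c_0-1)+r^ki+1$, the choice $d_k=c^{(k)}_0-1$, $N_l=l$, and the application of Theorem~\ref{fhc}. The only difference is that you spell out the no-cancellation of the corner entries and the triangular-submatrix argument for the span condition, which the paper leaves implicit.
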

\begin{proof}
By induction, we can show that $c^{(k)}_i=\sum_{n=0}^{k-1}r^n (c_0-1)+r^ki+1$ because for any $k\ge 2$, we have \[c^{(k)}_i=c^{(k-1)}_{c_i-1}.\] 
We thus have 
\[f_{k,l}:=\max\{c^{(k)}_i:i=0,\dots,l-1\}=\sum_{n=0}^{k-1}r^n (c_0-1)+r^k(l-1)+1.\] Therefore, letting $d_{k}=c^{(k)}_0-1$, we get for any $l\ge 1$,
\[\text{span}\{(a^{(k)}_{i,n})_{i=0\dots,l-1}:d_{k}\le n< f_{k,l}\}=\K^l\]
and for any $l,k\ge 1$, for any $k'\ge k+l$, we have 
\begin{align*}
f_{k,l}&= \sum_{n=0}^{k-1}r^n (c_0-1)+r^k(l-1)+1\\
&\le \sum_{n=0}^{k+l-1}r^n (c_0-1) = c^{(k+l)}_0-1= d_{k+l}\le d_{k'}.
\end{align*}
We conclude by Theorem \ref{fhc}.
\end{proof}

\begin{cor}
Each operator $P(B_w)+\sum_{k=1}^{\infty} \alpha_k S_{w}^k$, where $P$ is a non-constant polynomial and $S_w$ is a weighted forward shift, possesses a frequently hypercyclic subspace on $\omega$.
\end{cor}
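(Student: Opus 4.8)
The plan is to deduce this corollary from the previous one, which already shows that any operator $T:\omega\to\omega$ with $c_0\ge 2$ and $c_{i+1}=c_i+r$ for all $i\ge 0$ possesses a frequently hypercyclic subspace. So the entire task reduces to a bookkeeping computation: identifying the numbers $c_i$ associated with the operator $T=P(B_w)+\sum_{k\ge 1}\alpha_k S_w^k$, where $c_i$ denotes (as in the paragraph preceding Corollary~\ref{cor bes}) the smallest index such that $a_{i,j}=0$ for all $j\ge c_i$, $T=(a_{i,j})_{i,j\ge0}$.

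First I would fix notation: write $P(z)=\sum_{n=0}^{r}b_nz^n$ with $b_r\ne 0$, so $P$ is non-constant means $r\ge 1$. Recall that $B_w$ acts by $B_w e_j=w_j e_{j-1}$ (with $w_0$ irrelevant, $B_we_0=0$) and $S_w$ is the weighted forward shift $S_we_j=w'_j e_{j+1}$. The key point is that $B_w^n$ maps $e_j$ into a multiple of $e_{j-n}$, so $P(B_w)$ contributes, in column $j$ of the matrix, nonzero entries only in rows $j-r,\dots,j$; in other words $P(B_w)e_j$ lives in $\mathrm{span}\{e_{j-r},\dots,e_j\}$, with the $e_{j-r}$ component equal to $b_r$ times a product of weights, hence nonzero for $j\ge r$. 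Meanwhile each $S_w^k e_j$ is a multiple of $e_{j+k}$, so the whole perturbation $\sum_k\alpha_k S_w^k$ contributes to column $j$ only in rows $\ge j+1$. Consequently, reading along \emph{row} $i$: the entry $a_{i,j}$ can be nonzero only when either $i-r\le j\le i$ (from $P(B_w)$) or $j<i$ (from the forward part, since $S_w^ke_j$ hits row $j+k>j$, i.e.\ row $i$ is hit by columns $j=i-k<i$). In every case $a_{i,j}=0$ once $j>i$, and $a_{i,i-r}\ne 0$ for $i\ge r$ is \emph{not} quite what we want; rather I should check that $a_{i,j}=0$ for $j\ge i+1$ and that this bound is attained, i.e.\ $a_{i,i}\ne 0$. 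The diagonal entry $a_{i,i}$ equals $b_0=P(0)$ plus possibly a forward contribution, but forward shifts only hit strictly higher rows, so $a_{i,i}=b_0$; if $b_0\ne 0$ this gives $c_i=i+1$, which does \emph{not} satisfy $c_0\ge 2$.

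This signals that the honest bound is governed by the top-degree term of $P$: $B_w^r e_j=w_jw_{j-1}\cdots w_{j-r+1}e_{j-r}$, and no other term of $T$ sends a basis vector $e_j$ down by exactly $r$ steps into a row smaller than $j$... wait, that lowers the row. Let me restate: the correct reading is in terms of columns after all, since the definition of $c_i$ in the text is stated rowwise but Corollary~\ref{cor bes} is applied via $\ker p_n$. Re-examining: $T e_j = P(B_w)e_j+\sum_k\alpha_k S_w^ke_j$ has its \emph{highest-index} nonzero coordinate equal to... $P(B_w)e_j\in\mathrm{span}\{e_{j-r},\dots,e_j\}$ with $e_j$-coefficient $b_0$, and the $S_w^k$ terms contribute in coordinates $j+k$. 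If $\alpha_k\ne0$ for arbitrarily large $k$, $Te_j$ need not be finitely supported — but $T$ must be a well-defined operator $\omega\to\omega$, which forces each \emph{row} to be eventually zero, hence $a_{i,j}=0$ for $j$ large; for a fixed row $i$, the forward contribution to $a_{i,j}$ comes from $\alpha_{i-j}S_w^{i-j}$ with $i-j\ge 1$, i.e.\ $j\le i-1$, and $P(B_w)$ contributes when $i\le j\le i+r$. So $a_{i,j}=0$ for $j\ge i+r+1$, and $a_{i,i+r}$ is the $e_i$-coefficient of $b_r B_w^r e_{i+r}=b_r w_{i+r}\cdots w_{i+1}e_i$, which is nonzero. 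Hence $c_i=i+r+1$, so $c_0=r+1\ge 2$ and $c_{i+1}=c_i+1$; this is the hypothesis of the preceding corollary with the role of $r$ there played by $1$. Applying that corollary concludes the proof.

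The main obstacle, and the only real content, is getting the index arithmetic exactly right — in particular, correctly locating which term of $T=P(B_w)+\sum_k\alpha_k S_w^k$ is responsible for the ``last'' nonzero entry $a_{i,c_i-1}$ in each row, and verifying that the weights make that entry genuinely nonzero (this uses $b_r\ne 0$ and that the weights $w_j$ of a weighted shift are nonzero) and that no forward term can reach as far right as column $i+r$ in row $i$ (it can't, since $S_w^k$ raises the index, placing row-$i$ contributions in columns $<i$). Once $c_i=i+r+1$ is established, everything else is immediate from the cited corollary, so I would keep the write-up short: state the matrix description of $B_w$, $S_w^k$ and $P(B_w)$, compute $c_i=i+r+1$, note $c_0\ge 2$ and $c_{i+1}=c_i+1$, and invoke the previous corollary.
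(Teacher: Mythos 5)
Your final computation is correct and is exactly the intended argument: the paper states this corollary without proof right after the criterion ``$c_0\ge 2$ and $c_{i+1}=c_i+r$'', and the whole content is verifying that for $T=P(B_w)+\sum_k\alpha_kS_w^k$ with $\deg P=r\ge 1$ one has $c_i=i+r+1$ (the last nonzero entry of row $i$ coming from the leading term $b_rB_w^r$, the forward-shift part only populating columns $j<i$), whence $c_0\ge 2$ and $c_{i+1}=c_i+1$ and the preceding corollary applies. The mid-proof detour about rows versus columns is explicitly corrected and does not affect the final, correct index arithmetic.
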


\subsection{Weighted shifts}\label{weighted}

Weighted shifts are classical examples of hypercyclic operators. In \cite{Menet2}, a characterization of weighted shifts with hypercyclic subspaces on certain Köthe sequence spaces with a continuous norm is given. In this subsection, we complement this result by giving a characterization of weighted shifts with hypercyclic subspace of type $2$ on certain Fréchet sequence spaces.
 
A Fréchet space $X$ is a \emph{Fréchet sequence space}
if $X$ is a subspace of $\omega$ such that the embedding of $X$ in $\omega$ is continuous. In other words, $X$ is a Fréchet sequence space if the convergence in $X$ implies the convergence coordinates by coordinates. A family of Fréchet sequence spaces is given by Köthe sequence spaces. Let $A=(a_{j,k})_{j\ge 1,k\ge 0}$ be a matrix such that for any $j\ge 1$, any $k\ge 0$, we have  $a_{j,k}\le a_{j+1,k}$ and for any $k\ge 0$, there exists $j\ge 1$ such that $a_{j,k}\ne 0$.
We define the (real or complex) \emph{Köthe sequence spaces} $\lambda^p(A)$ with $1\le p<\infty$ and $c_0(A)$ by
\begin{align*}
\lambda^p(A)&:=\Big\{(x_k)_{k\ge 0}\in \omega : p_j((x_k)_k)=\Big(\sum_{k=0}^{\infty}|x_ka_{j,k}|^p\Big)^{\frac 1 p}<\infty, \ j\ge 1\Big\},\\
c_0(A)&:=\{(x_k)_{k\ge 0}\in \omega: \lim_{k\rightarrow \infty}|x_k|a_{j,k}=0, \ j\ge 1\} \text{ with } p_j((x_k)_k)= \sup_{k\ge 0}|x_k|a_{j,k}.
\end{align*}
These Fréchet spaces possess a continuous norm if and only if there exists $j\ge 1$ such that for any $k\ge 0$, $a_{j,k}\ne 0$.

We fix a Fréchet sequence space $X$ and a weighted shift $B_w:X\rightarrow X$ defined by $B_we_n=w_ne_{n-1}$, where $e_{-1}=0$, $(e_n)_{n\ge 0}$ is the canonical basis in $\omega$ and $(w_n)_{n\ge 1}$ is a sequence of non-zero scalars.

\begin{theorem}\label{thm ws}
Suppose that the set of finite sequences is a dense subset of $X$. If for any continuous seminorm $p$ on $X$, the set $\{k\ge 0:p(e_k)=0\}$  contains arbitrarily long intervals, then $B_w$ possesses a hypercyclic subspace of type $2$.
\end{theorem}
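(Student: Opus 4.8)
The plan is to apply Theorem~\ref{thm X0} to the sequence $T_k=B_w^k\colon X\to X$, with $Y=X$ and $(q_j)=(p_j)$, choosing the subsequence $n_k=k$ and taking $X_0$ to be the space of finitely supported sequences. First I would observe that the hypothesis forces $X$ to carry no continuous norm: if $p$ were a continuous norm then $\{k\ge 0:p(e_k)=0\}$ would be empty (each $e_k\ne 0$) and so would contain no interval of positive length, contradicting the assumption. In particular $X$ is infinite-dimensional and the notion of a hypercyclic subspace of type~$2$ is meaningful; moreover the hypothesis that the finite sequences form a dense subset of $X$ gives $X_0\subset X$ with $\overline{X_0}=X$.

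Condition~1 of Theorem~\ref{thm X0} comes for free. If $x\in X_0$ is supported in $[0,d]$, then $B_w^k x=0$ for every $k>d$, since $B_w^k$ shifts the support down by $k$; hence $(p_j(B_w^k x))_k$ is ultimately zero for every $j\ge 1$.

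The substance of the argument is Condition~2: for all $n,K\ge 1$ the set $\bigcup_{k\ge K}B_w^k\big(X_0\cap\ker p_n\big)$ must be dense in $X$. Since $X_0$ is dense, it suffices to realize each finite sequence $y=\sum_{l=0}^d y_l e_l$ as $B_w^k x$ for some $k\ge K$ and some $x\in X_0\cap\ker p_n$. Applying the hypothesis to the seminorm $p_n$, I would pick an interval of length at least $d+K$ contained in $\{k\ge 0:p_n(e_k)=0\}$ and keep its upper part, obtaining an interval $[N,N+d]$ with $N\ge K$ on which $p_n(e_k)=0$. Then set
\[
x=\sum_{l=0}^{d}\frac{y_l}{w_{l+1}w_{l+2}\cdots w_{l+N}}\,e_{l+N}.
\]
Its support lies in $[N,N+d]$, so subadditivity and homogeneity of $p_n$ give $p_n(x)=0$, i.e.\ $x\in X_0\cap\ker p_n$; and since $B_w^N e_{l+N}=(w_{l+1}\cdots w_{l+N})\,e_l$, a direct computation yields $B_w^N x=y$ with $N\ge K$. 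This establishes Condition~2, and Theorem~\ref{thm X0} then delivers a hypercyclic subspace of type~$2$ for $B_w$.

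The only step that is not completely mechanical is the elementary combinatorial remark used above: a subset of $\Z_+$ containing intervals of arbitrarily large length must, for any prescribed length $d+1$ and any prescribed threshold $K$, contain an interval of length $d+1$ lying entirely above $K$ (take any interval of length $\ge d+K$ in the set and keep its top $d+1$ coordinates). This is precisely what allows the shift exponent $N$ to be pushed past $K$ while keeping the support of $x$ inside $\ker p_n$, and it is where the "arbitrarily long intervals" hypothesis — rather than merely "infinitely many $k$ with $p(e_k)=0$" — is needed.
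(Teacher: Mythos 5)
Your proof is correct and follows essentially the same route as the paper: apply Theorem~\ref{thm X0} with $X_0$ the finite sequences and the full sequence of iterates, using an interval of indices where $p_n$ vanishes to lift a given finite sequence $y$ to a preimage $x=\sum_l y_l(\prod_{\nu=1}^N w_{l+\nu})^{-1}e_{l+N}\in X_0\cap\ker p_n$. You are in fact slightly more explicit than the paper about pushing the shift exponent $N$ past the threshold $K$ (modulo a harmless off-by-one in ``length at least $d+K$'', which a length of $d+K+1$ fixes), a point the paper leaves implicit.
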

\begin{proof}
Let $X_0$ be the set of finite sequences. We can prove that the assumptions of Theorem \ref{thm X0} are satisfied for $X_0$ and the whole sequence $(n)$.
Indeed, for any $x\in X_0$, we remark that the sequence $(B_w^nx)_n$ is ultimately zero. Moreover, for any continuous seminorm $p$ on $X$, for any $n\ge 1$, we know that there exists $k\ge 0$ such that
\[p(e_{k})=\cdots=p(e_{k+n})=0.\] Let $x_0,\dots,x_n\in \K$.
We have thus that
\[y:=\sum_{l=0}^{n}\frac{x_l}{\prod_{\nu=1}^{k}w_{l+\nu}}e_{k+l}\in \ker p\cap X_0
\text{\ and \ }B^k_wy=\sum_{l=0}^nx_le_l.\]
Since the finite sequences are dense in $X$, we conclude by Theorem \ref{thm X0}.
\end{proof}

If there exists an increasing sequence of seminorms $(p_j)_{j\ge 1}$ inducing the topology of $X$ such that for any $(x_k)_{k\ge 0}$, $(y_k)_{k\ge 0}\in X$, we have
\[(|x_k|\le |y_k| \text{ for any $k\ge 0$})\Rightarrow p_j((x_k)_{k\ge 0})\le p_j((y_k)_{k\ge 0}),\]
then it is interesting to notice that if the assumption of Theorem \ref{thm ws} is not satisfied, the existence of a weighted shift $B_w:X\rightarrow X$ implies that $X$ possesses a continuous norm. 

\begin{lemma}\label{lem cont}
Let $(p_j)_{j\ge 1}$ be an increasing sequence of seminorms inducing the topology of $X$. Suppose that the set of finite sequences is a subset of $X$ and that for any $(x_k)_{k\ge 0}$, $(y_k)_{k\ge 0}\in X$, we have
\begin{equation}
(|x_k|\le |y_k| \text{ for any $k\ge 0$})\Rightarrow p_j((x_k)_{k\ge 0})\le p_j((y_k)_{k\ge 0}).
\label{ideakarl}
\end{equation}
If there exist a weighted shift $B_w$ from $X$ to $X$ and a continuous seminorm $p$ on $X$ such that the set $\{k\ge 0:p(e_k)=0\}$ does not contain arbitrarily long intervals, then $X$ possesses a continuous norm.
\end{lemma}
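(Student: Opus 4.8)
The goal is to show that, under hypothesis \eqref{ideakarl} and the existence of a weighted shift $B_w:X\to X$, if some continuous seminorm $p$ has the property that $\{k:p(e_k)=0\}$ does not contain arbitrarily long intervals, then $X$ has a continuous norm — i.e.\ some single continuous seminorm $p_J$ vanishes only at $0$. The natural strategy is to build, out of $p$ and the shift $B_w$, a new continuous seminorm that bounds each coordinate $|x_k|$ from below; condition \eqref{ideakarl} will then let us control arbitrary vectors.

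First I would fix the integer $L$ given by the hypothesis on $p$: there is $L\ge 1$ such that every block of $L$ consecutive integers $\{m,m+1,\dots,m+L-1\}$ contains at least one index $k$ with $p(e_k)\ne 0$. Next, since $B_w$ is continuous on $X$, for each $j$ there is a continuous seminorm that dominates $p\circ B_w^j$; because the seminorms $(p_j)$ are increasing and define the topology, I can choose a single index $J$ and a constant $C>0$ with $p(B_w^j x)\le C\,p_J(x)$ for all $x\in X$ and all $j=0,1,\dots,L-1$ — a finite set of $j$'s, so this is just a finite max. I would then replace $p_J$ by $\max_{0\le j\le L-1} p(B_w^j\cdot)$ (still continuous, and comparable to $p_J$), call it $q$. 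The key computation is that $B_w^j e_k = \big(\prod_{\nu=1}^{j} w_{k-\nu+1}\big) e_{k-j}$ up to the obvious convention, so $p(B_w^j e_k)=|w_k w_{k-1}\cdots w_{k-j+1}|\,p(e_{k-j})$; hence for any fixed $k$, choosing $j\in\{0,\dots,L-1\}$ so that $k-j$ is the index in the block $\{k-L+1,\dots,k\}$ with $p(e_{k-j})\ne 0$, we get $q(e_k)\ge p(B_w^j e_k)=|w_k\cdots w_{k-j+1}|\,p(e_{k-j})>0$. Thus $q(e_k)>0$ for \emph{every} $k\ge 0$.

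Now I would upgrade this to a genuine norm estimate. For $x=(x_k)_{k\ge0}\in X$ with some $x_{k_0}\ne 0$, apply the coordinate-monotonicity \eqref{ideakarl}: comparing $x$ with the finite sequence $|x_{k_0}|e_{k_0}$ (which lies in $X$ by assumption and whose $k_0$-th coordinate matches while all others are dominated trivially) gives $q(x)\ge q(|x_{k_0}|e_{k_0}) = |x_{k_0}|\,q(e_{k_0})>0$, provided $q$ itself satisfies the monotonicity property. Here is the one point that needs care: $q$ as I defined it is a finite max of seminorms of the form $p\circ B_w^j$, and I must check that $q$ inherits \eqref{ideakarl}, or else phrase the estimate directly in terms of $p$. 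The cleanest fix is to observe that $|x_k|\le|y_k|$ for all $k$ implies $|(B_w^j x)_k|\le|(B_w^j y)_k|$ for all $k$ (since $B_w^j$ acts diagonally on coordinates after a shift, multiplying by the fixed scalars $w_\nu$), so each $p\circ B_w^j$ is coordinatewise monotone, and hence so is their max $q$. With that in hand, $q(x)>0$ for every nonzero $x\in X$, so $q$ — being continuous and comparable to $p_J$ — shows that $p_J$ is a continuous norm, and $X$ has a continuous norm.

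The main obstacle is the bookkeeping in the second paragraph: correctly identifying $B_w^j e_k$ with the right product of weights and the right shifted basis vector (watching the $e_{-1}=0$ convention so that $B_w^j e_k=0$ when $j>k$, which is harmless since we only ever need $j\le L-1$ and $k$ large, or we simply start the argument at $k\ge L-1$ and handle the finitely many small $k$ by noting $X$ having a continuous norm is a tail property), and then transferring positivity on basis vectors to positivity on all of $X$ via \eqref{ideakarl}. Everything else is routine continuity and the fact that an increasing defining sequence of seminorms can absorb any finite collection of continuous seminorms into a single $p_J$.
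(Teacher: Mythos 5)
Your overall strategy is the same as the paper's: use the bound $L$ on the lengths of the gaps, the continuity of the finitely many powers $B_w^j$ ($0\le j\le L-1$) to get $p(B_w^j x)\le C\,p_J(x)$, the computation of $B_w^j$ on basis vectors to deduce that some continuous seminorm is nonzero at every $e_k$, and finally the coordinatewise monotonicity \eqref{ideakarl} to upgrade positivity on basis vectors to positivity on all nonzero vectors. (The paper runs the transfer forward, from $p(e_k)\ne 0$ to $p_J(e_{k+i})\ne 0$ for $0\le i\le N$, after normalizing so that $p(e_0)\ne 0$; you run it backward, which forces you to patch the finitely many indices $k<L-1$, but your fix is legitimate since the coordinate functionals are continuous on a Fr\'echet sequence space.)

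There is one step that does not go through as written: you claim that each $p\circ B_w^j$, and hence $q=\max_{0\le j\le L-1} p\circ B_w^j$, is coordinatewise monotone. That would require the seminorm $p$ itself to satisfy \eqref{ideakarl}, but the hypothesis only grants monotonicity for the seminorms $p_j$ of the chosen defining sequence; $p$ is an arbitrary continuous seminorm and need not be monotone. The repair is immediate and already implicit in your last sentence: from $q\le C\,p_J$ and $q(e_k)>0$ for all $k$ you get $p_J(e_k)>0$ for all $k$, and $p_J$ \emph{is} monotone by hypothesis, so $p_J(x)\ge |x_{k_0}|\,p_J(e_{k_0})>0$ for any $x$ with $x_{k_0}\ne 0$. (Alternatively, replace $p$ at the outset by a dominating seminorm $p_{j_0}$, whose zero set on the basis vectors is contained in that of $p$ and hence also contains no arbitrarily long intervals.) With that substitution the argument is correct and essentially identical to the paper's.
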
 
\begin{proof}
By hypothesis, there exist  a continuous seminorm $p$ on $X$ and $N\ge 1$ such that for any $k\ge 0$, we have $p(e_{k+i})\ne 0$ for some $0\le i\le N$. Without loss of generality, we can suppose that $p(e_0)\ne 0$. Moreover, since $B_w$ is a weighted shift from $X$ to $X$ and $X$ is a Fréchet sequence space, we know that $B_w$ is continuous. Therefore, there exist $j\ge 1$ and a constant $C>0$ such that for any $k\ge 0$, for any $0\le i\le N$, we have \[p(B^i_w e_k)\le C p_j(e_k).\]
In particular, for any $k\ge 0$, if $p(e_k)\ne 0$, then $p_j(e_{k+i})\ne 0$ for any $0\le i\le N$. Thanks to properties of $N$ and the fact that $p(e_0)\ne 0$, we deduce that for any $k\ge 0$, $p_j(e_k)\ne 0$.
Using \eqref{ideakarl}, we therefore conclude that $p_j$ is a norm.
\end{proof}

As hypercyclic subspaces of type $2$ do not exist on Fréchet spaces with a continuous norm, we obtain the following characterization:
\begin{theorem} 
Let $X$ be a Fréchet sequence space such that the set of finite sequences is a dense subset of $X$.
Let $B_w$ be a weighted shift from $X$ to $X$.
Let $(p_j)_{j\ge 1}$ be an increasing sequence of seminorms inducing the topology of $X$. Suppose that for any $(x_k)_{k\ge 0}$, $(y_k)_{k\ge 0}\in X$, we have
\begin{equation*}
(|x_k|\le |y_k| \text{ for any $k\ge 0$})\Rightarrow p_j((x_k)_{k\ge 0})\le p_j((y_k)_{k\ge 0}).
\end{equation*}
Then the operator $B_w$ possesses a hypercyclic subspace of type $2$ if and only if for any $j\ge 1$, the set $\{k\ge 0:p_j(e_k)=0\}$ contains arbitrarily long intervals.
\end{theorem}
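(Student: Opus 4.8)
The plan is to read off this equivalence from Theorem~\ref{thm ws}, Lemma~\ref{lem cont}, and the observation (already recorded just before the statement) that a Fréchet space with a continuous norm supports no hypercyclic subspace of type~$2$. Since those three ingredients are in place, the proof amounts to matching hypotheses, and each direction is short.

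\textbf{Sufficiency.} I would assume that $\{k\ge 0:p_j(e_k)=0\}$ contains arbitrarily long intervals for every $j\ge 1$ and then verify the hypothesis of Theorem~\ref{thm ws}, namely that for \emph{every} continuous seminorm $p$ on $X$ the set $\{k\ge 0:p(e_k)=0\}$ contains arbitrarily long intervals. As $(p_j)$ is an increasing fundamental sequence of seminorms, there exist $j\ge 1$ and $C>0$ with $p\le Cp_j$; hence $p(e_k)\le Cp_j(e_k)$, so $\{k:p_j(e_k)=0\}\subseteq\{k:p(e_k)=0\}$, and the right-hand set inherits arbitrarily long intervals from the left-hand one. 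Since the finite sequences are dense in $X$, Theorem~\ref{thm ws} then yields a hypercyclic subspace of type~$2$ for $B_w$.

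\textbf{Necessity.} Here I would argue contrapositively: if there is some $j\ge 1$ for which $\{k\ge 0:p_j(e_k)=0\}$ does not contain arbitrarily long intervals, then $p_j$ is a continuous seminorm witnessing the hypothesis of Lemma~\ref{lem cont}. Since the finite sequences belong to $X$, the seminorms $(p_j)$ satisfy the monotonicity condition \eqref{ideakarl}, and $B_w$ is a weighted shift from $X$ to $X$ (hence automatically continuous on the Fréchet sequence space $X$), Lemma~\ref{lem cont} applies and provides a continuous norm $q$ on $X$. But a hypercyclic subspace $M$ of type~$2$ must meet $\ker q=\{0\}$ in an infinite-dimensional subspace, which is impossible; so $B_w$ possesses no hypercyclic subspace of type~$2$. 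This is exactly the contrapositive of the remaining implication.

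The real content has already been absorbed into Theorem~\ref{thm ws} and Lemma~\ref{lem cont}, so I do not expect a genuine obstacle; the only points deserving attention are the passage from the fixed fundamental seminorms $p_j$ to an arbitrary continuous seminorm $p$ via the domination $p\le Cp_j$ in the sufficiency part, and, symmetrically, checking in the necessity part that the hypotheses of Lemma~\ref{lem cont} (the monotonicity \eqref{ideakarl} together with the continuity of $B_w$) are precisely those furnished by the statement.
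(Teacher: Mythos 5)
Your proposal is correct and follows exactly the route the paper intends: the paper states this theorem without a separate proof precisely because it is the combination of Theorem~\ref{thm ws} (sufficiency), Lemma~\ref{lem cont} (necessity, via the contrapositive), and the observation that a continuous norm excludes type~$2$ subspaces. Your two bridging checks --- passing from the fundamental system $(p_j)$ to an arbitrary continuous seminorm via $p\le Cp_j$, and verifying the hypotheses of Lemma~\ref{lem cont} --- are exactly the details that need to be supplied, and you supply them correctly.
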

\begin{cor}
Let $X=\lambda^p(A)$ or $c_0(A)$ and $B_w$ a weighted shift from $X$ to $X$.
The operator $B_w$ possesses a hypercyclic subspace of type $2$ if and only if for any $j\ge 1$, the set $\{k\ge 0:a_{j,k}=0\}$ contains arbitrarily long intervals.
\end{cor}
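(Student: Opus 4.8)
The plan is to obtain this Corollary as a direct instance of the preceding theorem, so the only work is to verify that $X=\lambda^p(A)$ and $X=c_0(A)$ satisfy its hypotheses and to identify the relevant sets. Recall that the preceding theorem requires: that $X$ be a Fr\'echet sequence space in which the finite sequences are dense, that $B_w$ be a weighted shift from $X$ to $X$, and that there be an increasing sequence of seminorms $(p_j)$ inducing the topology of $X$ with the lattice-monotonicity property $(|x_k|\le |y_k| \text{ for all }k)\Rightarrow p_j((x_k))\le p_j((y_k))$; its conclusion is phrased in terms of the sets $\{k\ge 0:p_j(e_k)=0\}$. Thus it suffices to check these structural properties for $\lambda^p(A)$ and $c_0(A)$ and to translate $\{k:p_j(e_k)=0\}$ into $\{k:a_{j,k}=0\}$.

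First I would recall the standard facts that $\lambda^p(A)$ and $c_0(A)$, endowed with the seminorms $p_j$ displayed in the text, are Fr\'echet sequence spaces: the seminorms increase with $j$ because $a_{j,k}\le a_{j+1,k}$, they induce a complete metrizable locally convex topology, and convergence in any $p_j$ forces coordinatewise convergence since $|x_k|a_{j,k}\le p_j(x)$ while for each $k$ some $a_{j,k}\ne 0$. The finite sequences are dense: for $x\in c_0(A)$ one has $p_j\big(x-\sum_{k<N}x_k e_k\big)=\sup_{k\ge N}|x_k|a_{j,k}\to 0$ by definition of $c_0(A)$, and for $x\in\lambda^p(A)$ one has $p_j\big(x-\sum_{k<N}x_k e_k\big)^p=\sum_{k\ge N}|x_k a_{j,k}|^p\to 0$ as the tail of a convergent series; in both cases the truncations converge to $x$ in $X$.

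Next I would check the monotonicity condition: if $|x_k|\le |y_k|$ for all $k$, then termwise $|x_k a_{j,k}|^p\le |y_k a_{j,k}|^p$, so $p_j(x)\le p_j(y)$ in the $\lambda^p$ case, and likewise $\sup_k |x_k|a_{j,k}\le \sup_k |y_k|a_{j,k}$ in the $c_0$ case. Finally, evaluating the seminorms on the canonical vectors gives $p_j(e_k)=a_{j,k}$ in both cases (a single nonzero entry equal to $1$ in position $k$), so $\{k\ge 0:p_j(e_k)=0\}=\{k\ge 0:a_{j,k}=0\}$ for every $j\ge 1$. Feeding this identification into the preceding theorem yields exactly the stated equivalence.

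Since each of these verifications is routine, there is no genuine obstacle here; the only point that calls for a line of care is the density of the finite sequences, which is immediate from the definition in the $c_0(A)$ case and follows from convergence of the $p$-series defining $p_j$ in the $\lambda^p(A)$ case.
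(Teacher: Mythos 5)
Your proposal is correct and is exactly the intended argument: the paper states this corollary without proof as a direct instantiation of the preceding theorem, using precisely the observations you make — that the canonical seminorms of $\lambda^p(A)$ and $c_0(A)$ are increasing, lattice-monotone, and satisfy $p_j(e_k)=a_{j,k}$, and that the finite sequences are dense. Nothing further is needed.
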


\section*{Acknowledgment}

The author would like to thank the referee for valuable comments.

\end{document}